\numberwithin{equation}{section}
\newtheorem{theorem}{Theorem}[section]
\newtheorem{lemma}[theorem]{Lemma}
\newtheorem{proposition}[theorem]{Proposition}
\newtheorem{corollary}[theorem]{Corollary}
\theoremstyle{definition}
\newtheorem{definition}[theorem]{Definition}
\theoremstyle{remark}
\newtheorem{remark}[theorem]{Remark}
\newtheorem{example}[theorem]{Example}
\newcommand*{\Cst}{\textup C^*}
\newcommand*{\Cont}{\textup C}
\newcommand*{\nb}{\nobreakdash}
\newcommand{\into}{\rightarrowtail}
\newcommand{\prto}{\twoheadrightarrow}
\newcommand{\C}{\mathbb{C}}
\newcommand{\K}{\mathrm{K}}
\newcommand{\Z}{\mathbb{Z}}
\newcommand{\KK}{\mathrm{KK}}
\newcommand{\ideal}{\mathfrak{I}}
\newcommand{\N}{\mathbb N}
\newcommand{\Comp}{\mathbb K}
\newcommand{\Mat}{\mathbb M}
\DeclarePairedDelimiter{\abs}{\lvert}{\rvert}
\DeclareMathOperator{\Hom}{Hom} 
\DeclareMathOperator{\Ext}{Ext} 
\newcommand*{\defeq}{\mathrel{\vcenter{\baselineskip0.5ex \lineskiplimit0pt
                     \hbox{.}\hbox{.}}}%
                     =}
\DeclareMathOperator{\Res}{Res}
\DeclareMathOperator{\Ind}{Ind}
\title[A UCT for actions of finite group on $\Cst$\nb-algebras]{A
  universal coefficient theorem for actions of finite groups on
  $\Cst$\nb-algebras}
\author{Ralf Meyer}
\email{rmeyer2@uni-goettingen.de}
\address{Mathematisches Institut\\
  Universit\"at G\"ottingen\\Bunsenstra\ss e 3--5\\
  37073 G\"ottingen\\Germany}
\author{George Nadareishvili}
\email{giorgi.nadareishvili@kiu.edu.ge}
\address{School of Mathematics\\
Kutaisi International University\\Akhalgazrdoba Ave.~Lane~5/7\\
4600 Kutaisi\\Georgia 
}
\subjclass[2020]{Primary 19K35; secondary 14L35}
\keywords{C*-algebra classification; Universal Coefficient Theorem;
  Kirchberg algebra; bootstrap class; equivariant K-theory; 
  triangulated categories; relative homological algebra.}
\begin{document}

\begin{abstract}
  The equivariant bootstrap class in the Kasparov category of
  actions of a finite group~$G$ consists of those actions that are
  equivalent to one on a Type~I $\Cst$\nb-algebra.  Using a result
  by Arano and Kubota, we show that this bootstrap class is already
  generated by the continuous functions on~$G/H$ for all cyclic
  subgroups~$H$ of~$G$.  Then we prove a Universal Coefficient
  Theorem for the localisation of this bootstrap class at the
  group order~$\abs{G}$.  This allows us to classify certain
  $G$\nb-actions on stable Kirchberg algebras up to cocycle
  conjugacy.
\end{abstract}

\maketitle

\section{Introduction}

The Kirchberg--Phillips classification of nuclear, simple, purely
infinite, separable, stable \(\Cst\)\nb-algebras
(see~\cite{Phillips:Classification}) may be be split into two parts.
The first, analytic part shows that two such \(\Cst\)\nb-algebras
are isomorphic once they are \(\KK\)-equivalent.  The second,
topological part shows that they are \(\KK\)-equivalent once they
belong to the bootstrap class and have isomorphic \(\K\)\nb-theory.
In addition, any pair of \(\Z/2\)-graded Abelian groups is the
\(\K\)\nb-theory of some such \(\Cst\)\nb-algebra.  Both topological
statements follow from the Universal Coefficient Theorem, which
computes \(\KK(A,B)\) in terms of \(\K_*(A)\) and \(\K_*(B)\)
provided~\(A\) is in the bootstrap class.

There has been some recent progress on dynamical analogues of this
classification, where the aim is to classify certain group actions.
On the analytic side, the dynamical Kirchberg--Phillips theorem by
Gabe and Szab\'o in~\cite{Gabe-Szabo:Dynamical_Kirchberg} says that
two pointwise outer actions of a discrete, countable, amenable group
on stable Kirchberg algebras are cocycle conjugate if and only if
they are equivalent in the equivariant Kasparov category~\(\KK^G\).
On the topological side, Manuel K\"ohler~\cite{Koehler:Thesis}
showed for a finite cyclic group of prime order that isomorphism
classes of objects in the equivariant bootstrap class are in
bijection with isomorphism classes of ``exact'' modules over a
certain ring.  The relevant ring is, however, rather complicated.
In~\cite{Meyer:Actions_Kirchberg}, the classification is worked out
in detail in the special case when the \(\K\)\nb-theory is a cyclic
group.  In addition, the case when~\(G\) is torsion-free amenable is
also treated in~\cite{Meyer:Actions_Kirchberg}: actions of such
groups are equivalent to locally trivial bundles over the
classifying space of~\(G\).

The classification in~\cite{Meyer:Actions_Kirchberg} for the cyclic
group of prime order~\(p\) is remarkably subtle in general, but it
becomes rather simple if~\(p\) is invertible in the ring
\(\KK^G(A,A)\).  Here we generalise this easier part of the
classification to an arbitrary finite group~\(G\), assuming that the
group order~\(\abs{G}\) is invertible in \(\KK^G(A,A)\).  Our result
is related to recent independent work by Bouc, Dell'Ambrogio and
Martos~\cite{Bouc-DellAmbrogio-Martos:Splitting}.  They prove that
the localisation of the bootstrap class in \(\KK^G\)
at~\(\mathbb{Q}\) is semisimple and compute \(\KK^G(A,A)\) when it
is a \(\mathbb{Q}\)\nb-vector space.  Our approach is more
elementary, using explicit polynomials for some key computations.
This allows us to prove the main result after inverting only the
group order~\(\abs{G}\).

Another difference is that~\cite{Bouc-DellAmbrogio-Martos:Splitting}
treats only the Kasparov category of \(G\)-cell algebras.  Using an
important theorem by Arano and
Kubota~\cite{Arano-Kubota:Atiyah-Segal}, we show that this
subcategory is the same as the equivariant bootstrap class
in~\(\KK^G\), that is, the subcategory of all objects that are
\(\KK^G\)-equivalent to a \(G\)\nb-action on a separable
\(\Cst\)\nb-algebra of Type~I.  As a result, our main result
classifies pointwise outer actions of finite groups on~\(A\) up to cocycle
conjugacy provided~\(A\) is a stable Kirchberg algebra, the action belongs
to the \(G\)\nb-equivariant bootstrap class, and~\(\abs{G}\) is
invertible in the ring \(\KK^G(A,A)\).  Incidentally, the results of
Arano and Kubota~\cite{Arano-Kubota:Atiyah-Segal} also imply that a
\(G\)\nb-action belongs to the \(G\)\nb-equivariant bootstrap class
if and only if the restrictions of the action belong to the
\(H\)\nb-equivariant bootstrap class for all cyclic subgroups
\(H\subseteq G\).  This implies that \(A\) and \(A\rtimes H\) for
cyclic subgroups \(H\subseteq G\) are in the bootstrap class
in~\(\KK\).  We do not know whether this necessary
condition is sufficient as well.

We end the introduction by formulating our main theorem.  Writing
down the classifying invariant needs some preparation.  Let
\(H\subseteq G\) be a cyclic subgroup and \(n\defeq \abs{H}\).  The
representation ring of~\(H\) is isomorphic to~\(\Z[z]/(z^n-1)\).
Let \(\Phi_n\in\Z[z]\) be the \(n\)th cyclotomic polynomial, whose
zeros are exactly the primitive \(n\)th roots of unity.  This
divides \(z^n-1\), so that \(\Z[z]/(\Phi_n)\) is a quotient of the
representation ring of~\(H\).  The representation ring is isomorphic
to the ring \(\KK^H_0(\C,\C)\), and the induction functor induces a
map from this to \(\KK^G_0(\Cont(G/H),\Cont(G/H))\).  Let
\(N_H \defeq \{g\in G \mid g H g^{-1} = H\}\).  This acts on~\(G/H\)
and thus on~\(\Cont(G/H)\) by right translations, with the
subgroup~\(H\) of~\(N_H\) acting trivially.  Thus we get a
homomorphism from the quotient group \(W_H\defeq N_H/H\) into the
ring \(\KK^G_0(\Cont(G/H),\Cont(G/H))\).  The group~\(W_H\) also
acts on the representation ring of~\(H\) because~\(N_H\) acts
on~\(H\) by automorphisms and inner automorphisms act trivially on
the representation ring.  The homomorphisms from the representation
ring \(\Z[z]/(z^n-1)\) and from the group~\(W_H\) to
\(\KK^G_0(\Cont(G/H),\Cont(G/H))\) are covariant and so combine to a
homomorphism on \(\Z[z]/(z^n-1)\rtimes W_H \).  Hence
\(\Z[z]/(z^n-1)\rtimes W_H \) acts on the \(\Z/2\)-graded Abelian
group \(\KK^G_*(\Cont(G/H),B) \cong \K_0(B\rtimes H)\) for any
\(G\)\nb-\(\Cst\)-algebra~\(B\).  Let
\[
  F^H_*(B) \defeq \{x\in\KK^G_*(\Cont(G/H),B) \mid \Phi_n(z)\cdot x=0\}.
\]
The graded subgroup~\(F^H_*(B)\) in \(\KK^G_*(\Cont(G/H),B)\) is
even a \(\Z/2\)-graded module over the ring
\(\Z[z]/(\Phi_n(z))\rtimes W_H\).

\begin{theorem}
  \label{the:UCT_in_introduction}
  Let~\(G\) be a finite group.  Let~\(A\) and~\(B\) be
  \(G\)-\(\Cst\)\nb-algebras.  Suppose that~\(A\) is in the
  \(G\)\nb-equivariant bootstrap class, that is, it is
  \(\KK^G\)-equivalent to an action on a Type~I \(\Cst\)\nb-algebra.
  Suppose that~\(B\) is \(\abs{G}\)-divisible in the sense that
  multiplication by~\(\abs{G}\) on \(\KK^G(B,B)\) is invertible.
  Then there is a Universal Coefficient Theorem short exact sequence
  \begin{multline*}
    \prod_{H\subseteq G \textup{ cyclic}}
    \Ext^1_{\Z[z]/(\Phi_n(z))\rtimes W_H}    \bigl(F^H_{*-1}(A),F^H_*(B)\bigr)
    \into \KK^G(A,B) \\ \prto
    \prod_{H\subseteq G \textup{ cyclic}}
    \Hom_{\Z[z]/(\Phi_n(z))\rtimes W_H}    \bigl(F^H_*(A),F^H_*(B)\bigr).
  \end{multline*}
  Here the products run over conjugacy classes of cyclic subgroups
  \(H\subseteq G\).
  If~\(M_H\) are countable \(\Z/2\)-graded modules over
  \(\Z[z,1/\abs{G}]/(\Phi_n(z))\rtimes W_H\) for all cyclic
  subgroups \(H\subseteq G\), then there is a
  \(\abs{G}\)\nb-divisible object~\(B\) in the bootstrap class in
  \(\KK^G\) with \(F^H_*(B) \cong M_H\) for all cyclic subgroups
  \(H\subseteq G\), and this is unique up to \(\KK^G\)-equivalence.
\end{theorem}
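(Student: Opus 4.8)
The plan is to run the Meyer--Nest machinery of homological algebra in triangulated categories relative to the homological ideal $\ideal\subseteq\KK^G$ consisting of the morphisms~$f$ with $\K_*(f\rtimes H)=0$, equivalently $\KK^G_*(\Cont(G/H),f)=0$, for every cyclic subgroup $H\subseteq G$. Write $\mathcal{B}^G\subseteq\KK^G$ for the equivariant bootstrap class. By the discussion above, which rests on the theorem of Arano and Kubota, $\mathcal{B}^G$ is the localising subcategory generated by the objects $\Cont(G/H)$, and it suffices to let~$H$ range over representatives of the conjugacy classes of cyclic subgroups of~$G$. Direct sums of suspensions of the $\Cont(G/H)$ and their retracts are $\ideal$\nb-projective, lie in~$\mathcal{B}^G$, and generate it; hence $\ideal$ has enough projectives, and $\mathcal{B}^G$ together with the $\ideal$\nb-contractible objects forms a complementary pair in~$\KK^G$. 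Put $\mathcal{K}(A)\defeq\bigoplus_H\K_*(A\rtimes H)$, a $\Z/2$\nb-graded module over the category ring $\mathcal{R}\defeq\bigoplus_{H,H'}\KK^G_*(\Cont(G/H),\Cont(G/H'))$. For $A\in\mathcal{B}^G$ and an arbitrary~$B$ the framework then supplies a spectral sequence with $E_2$\nb-page the groups $\Ext^\ast_{\mathcal{R}}\bigl(\mathcal{K}(A),\mathcal{K}(B)\bigr)$, taken in $\Z/2$\nb-graded $\mathcal{R}$\nb-modules, converging to $\KK^G_*(A,B)$.

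The computational core is the structure of~$\mathcal{R}$ after inverting~$\abs{G}$. Using Frobenius reciprocity together with the Mackey decomposition $\Res^G_{H'}\Cont(G/H)\cong\bigoplus_{H'gH}\Cont\bigl(H'/(H'\cap gHg^{-1})\bigr)$ and the isomorphism $\KK^H_0(\C,\C)\cong R(H)=\Z[x]/(x^n-1)$, each $\KK^G_*(\Cont(G/H),\Cont(G/H'))$ is an explicit bimodule assembled from representation rings of subgroups of~$G$. The key claim — and this is exactly where the explicit cyclotomic polynomials of the introduction enter — is that $\mathcal{R}[1/\abs{G}]$ is Morita equivalent to the product $\prod_H\Z[x,1/\abs{G}]/(\Phi_n(x))\rtimes W_H$ over the conjugacy classes of cyclic subgroups: the polynomials $\Phi_d$ for $d\mid n$ are pairwise coprime in $\Z[x,1/n]/(x^n-1)$ and furnish orthogonal idempotents cutting $\mathcal{R}[1/\abs{G}]$ into blocks, the block of~$H$ being carved out by~$\Phi_n$ — which is the reason $F^H_*$ is defined as the $\Phi_n$\nb-torsion — and the off\nb-diagonal bimodules are absorbed into this block structure. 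I shall record two consequences. First, the block ring $\Z[x,1/\abs{G}]/(\Phi_n(x))\rtimes W_H$ has global dimension at most one: $\Z[1/\abs{G}][\zeta_n]$ is a localisation of a Dedekind domain, and $\abs{W_H}$, which divides~$\abs{G}$, is invertible, so a Maschke\nb-type averaging argument carries this bound through the crossed product. Second, under the Morita equivalence the $H$\nb-block of $\mathcal{K}(A)[1/\abs{G}]$ is identified with $F^H_*(A)[1/\abs{G}]$, its $\Phi_n$\nb-primitive summand. I expect that establishing this block decomposition — producing the idempotents and controlling all bimodules of~$\mathcal{R}$ after inverting~$\abs{G}$ — will be the main obstacle.

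Granting the block decomposition, the short exact sequence falls out of the spectral sequence. If~$B$ is $\abs{G}$\nb-divisible then $\mathcal{K}(B)$, and hence each $F^H_*(B)$, is $\abs{G}$\nb-divisible; consequently $\Ext^\ast_{\mathcal{R}}\bigl(\mathcal{K}(A),\mathcal{K}(B)\bigr)$ agrees with $\Ext^\ast_{\mathcal{R}[1/\abs{G}]}\bigl(\mathcal{K}(A)[1/\abs{G}],\mathcal{K}(B)\bigr)$, which vanishes above degree one by the block decomposition and the first consequence. Hence the spectral sequence is concentrated in two columns and degenerates to the two\nb-term exact sequence $\Ext^1\into\KK^G_*(A,B)\prto\Hom$; writing the $E_2$\nb-term blockwise, and using the second consequence to identify the blocks of $\mathcal{K}(A)[1/\abs{G}]$ with the $F^H_*(A)[1/\abs{G}]$, produces the product over~$H$ of the groups $\Hom$ and $\Ext^1$ of the $F^H_*(A)$ into the $F^H_*(B)$ over $\Z[x,1/\abs{G}]/(\Phi_n(x))\rtimes W_H$ (with the customary degree shift in the $\Ext^1$ term); since $F^H_*(B)$ is $\abs{G}$\nb-divisible, these coincide with the groups over $\Z[x]/(\Phi_n(x))\rtimes W_H$ in the statement. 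Naturality in~$A$ and~$B$ is part of the Meyer--Nest formalism.

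For the realisation and uniqueness assertion I would argue block by block. Given a countable $\Z/2$\nb-graded module~$M_H$ over $\Z[x,1/\abs{G}]/(\Phi_n(x))\rtimes W_H$, choose — using the global\nb-dimension bound — a free resolution $0\to R_1\to R_0\to M_H\to0$ of length one with $R_0,R_1$ countably generated. Under the block equivalence, $R_0$ and $R_1$ correspond to countable direct sums of suspensions of a block generator, which exists as a retract of $\Cont(G/H)\otimes\mathcal{O}$ in the idempotent\nb-complete category~$\KK^G$, where~$\mathcal{O}$ is a Kirchberg algebra in the bootstrap class with $\K_0(\mathcal{O})\cong\Z[1/\abs{G}]$ and $\K_1(\mathcal{O})=0$; this block generator is $\abs{G}$\nb-divisible and lies in~$\mathcal{B}^G$. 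Realising the resolution map by a morphism in~$\KK^G$ and forming its mapping cone yields a separable $G$\nb-$\Cst$\nb-algebra~$B$ that is $\abs{G}$\nb-divisible, lies in~$\mathcal{B}^G$, has $F^H_*(B)\cong M_H$, and satisfies $F^{H'}_*(B)=0$ for the remaining blocks; the direct sum over all~$H$ is the desired~$B$. For uniqueness, suppose $B,B'$ are $\abs{G}$\nb-divisible objects of~$\mathcal{B}^G$ with $F^H_*(B)\cong F^H_*(B')$ for all cyclic~$H$; surjectivity in the exact sequence lifts the isomorphism to a class $\varphi\in\KK^G(B,B')$, whose mapping cone~$X$ is again $\abs{G}$\nb-divisible and in~$\mathcal{B}^G$. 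Since $F^H_*(X)=0$ for all~$H$, the block decomposition forces $\K_*(X\rtimes H)=0$ for all cyclic~$H$, so~$X$ is $\ideal$\nb-contractible; being also in~$\mathcal{B}^G$, it vanishes, and~$\varphi$ is invertible.
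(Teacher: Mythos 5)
Your overall strategy is the same as the paper's: generation of the bootstrap class by the $\Cont(G/H)$ for cyclic $H$ via Arano--Kubota (Theorem~\ref{the:generate_A_from_induced}, Corollary~\ref{cor:generate_bootstrap}), the relative-homological spectral sequence of Theorem~\ref{thm:trspectral}, a cyclotomic-idempotent block decomposition after inverting $\abs{G}$, hereditariness of the blocks via Dedekind domains plus Maschke averaging (Lemma~\ref{lem:hereditary_WH}), and standard realisation/uniqueness arguments. The one genuinely different design choice is welcome: you keep the homological machinery in $\KK^G$ itself, where the $\Cont(G/H)$ are $\aleph_1$-compact, and localise only the coefficient category, transferring $\Ext_{\mathcal R}$ to $\Ext_{\mathcal R[1/\abs{G}]}$ by a flat change-of-rings using the $\abs{G}$-divisibility of $B$; the paper instead localises the triangulated category via $\Mat_{S^\infty}$ (Propositions \ref{pro:S-divisible}--\ref{pro:localised_bootstrap}) so that the cut-down generators $A_H^0$ become compact objects there. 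Your variant is legitimate for the exact sequence, and for realisation you in effect reintroduce the paper's $A_H^0$ as a retract of $\Cont(G/H)\otimes\mathcal O$, so the two routes converge.

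However, there is a genuine gap, and you name it yourself: the claim that $\mathcal R[1/\abs{G}]$ is Morita equivalent to $\prod_H \Z[x,1/\abs{G}]/(\Phi_n)\rtimes W_H$ is asserted, not proved, and everything downstream (degeneration of the spectral sequence into the stated product, the identification of the $H$-block with the $\Phi_n$-torsion $F^H_*$, orthogonality used in your uniqueness step, and the vanishing of the other blocks of your realising object) depends on it. Coprimality of the $\Phi_d$ in $\Z[x,1/n]/(x^n-1)$ only decomposes the diagonal representation rings; the substance is controlling the off-diagonal parts of $\mathcal R$, which requires the full computation of $\KK^G(\Cont(G/H),\Cont(G/L))$ as Mackey-module homomorphisms (Dell'Ambrogio, Theorem~\ref{thm:ivo}). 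Concretely, the paper fills this in three steps that your outline does not supply: (i) the character computation of the idempotents $n\psi_{n,k}$ (Lemma~\ref{lem:character_psi}) combined with the Frobenius relation in the Mackey module structure shows that the image of $p_{n,k}$ for a proper divisor $k\mid n$ is Murray--von Neumann equivalent to a subprojection of $p_{k,k}$ over the subgroup of order $k$ (Proposition~\ref{pro:fewer_generators_localisation}); (ii) the generators-and-relations description of the hom groups forces $\KK^G_S(A^0_H,A^0_{H'})=0$ for non-conjugate $H,H'$ (Lemma~\ref{lem:AH_zero_orthogonal}); (iii) the endomorphism ring of $A^0_H$ is identified with $\Z[\vartheta_n,1/\abs{G}]\rtimes W_H$, with injectivity deduced from Theorem~\ref{thm:ivo} and torsion-freeness. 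Without (i)--(iii) your argument is a correct plan rather than a proof; with them, the rest of your outline (change of rings, two-column degeneration, hereditariness, cone constructions) goes through.
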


Together with the dynamical Kirchberg--Phillips theorem by Gabe and
Szab\'o, this theorem implies a classification of certain outer
group actions on Kirchberg algebras up to cocycle conjugacy.  The
proof of Theorem~\ref{the:UCT_in_introduction} is based on ideas of
Manuel K\"ohler~\cite{Koehler:Thesis}.

\section{Equivariant \texorpdfstring{$\KK$}{KK}-theory}

Let~$G$ be a second countable locally compact group.  Let $\KK^G$
denote the Kasparov category of separable $G$\nb-$\Cst$\nb-algebras.
The spatial tensor product of $\Cst$\nb-algebras with diagonal
$G$\nb-action induces a symmetric monoidal structure on $\KK^G$,
which we denote by~$\otimes$.  By $\oplus$ we denote the
$\Cont_0$-direct sum, which exists for countable collections of
$G$\nb-$\Cst$\nb-algebras; it makes $\KK^G$ an additive category
with countable coproducts.  The category \(\KK^G\) is triangulated
(see~\cite{Meyer-Nest:BC}, and see~\cite{Neeman:Triangulated} for an
introduction to triangulated categories in general).  The suspension
functor is $\Sigma\defeq \Cont_0(\mathbb R)\otimes {-}$.  It is an
involutive equivalence by Bott periodicity.  The exact triangles
come either from mapping cones of equivariant \(^*\)-homomorphisms
or from extensions of $G$\nb-$\Cst$\nb-algebras with a
$G$\nb-equivariant, completely positive contractive section (for
details see the Appendix of~\cite{Meyer-Nest:BC}).

\subsection{Functors on \texorpdfstring{$\KK^G$}{KKG}}
\label{sec:functors_KKG}

Let $H\subseteq G$ be a closed subgroup.  The \emph{restriction} of
$G$\nb-actions defines a functor \(\Res^G_H\colon \KK^G\to\KK^H\).
It preserves coproducts, is triangulated, and symmetric monoidal
(see, for instance, \cite{Meyer-Nest:BC}).  The \emph{induction
  functor} \(\Ind^G_H\colon \KK^H\to\KK^G\) is defined by
\begin{multline*}
  \Ind^G_H(A)
  \defeq
  \{ G\xrightarrow{\phi} A \mid
  \phi\text{ continuous, }
  h\phi(gh)=\phi(g) \text{ for all }g\in G,h\in H,\\
  (gH\mapsto \lVert\phi(g)\rVert)\in \Cont_0(G/H) \},
\end{multline*}
equipped with the $G$\nb-action $(g \cdot \phi)(s) = \phi(g^{-1}s)$
for $g, s \in G$.  This is a triangulated functor preserving
coproducts.
If~$G/H$ is compact, then $\Ind_H^G$ is right adjoint to $\Res^G_H$
(see \cite{Meyer-Nest:BC}*{Equation~(19)}).  If~$G/H$ is discrete,
then $\Ind_H^G$ is left adjoint to $\Res^G_H$ (see
\cite{Meyer-Nest:BC}*{Equation~(20)}).  Thus, if~$G$ is finite, then
the restriction and induction functors are adjoint both ways for any
subgroup $H\subseteq G$.

For $g\in G$ and a subgroup $H\subseteq G$, the \emph{conjugation
  functor}
\[
  {}^g({-})\colon \KK^H\to\KK^{{}^g\!H}
\]
sends a $\Cst$\nb-algebra~$A$ with an $H$-action to the same
$\Cst$\nb-algebra~\(A\) with an action of~${}^g H\defeq gHg^{-1}$ defined by
$ghg^{-1}\cdot a\defeq ha$ for $h\in H$ and $a\in A$.  The
functor of conjugation by~\(g\) is a triangulated, monoidal
equivalence: the inverse is conjugation by~$g^{-1}$.  As such, it
preserves coproducts.

This article is based on the following remarkable result of Arano and
Kubota:

\begin{theorem}[\cite{Arano-Kubota:Atiyah-Segal}*{Corollary~3.13.(1)}]
  \label{the:Arano-Kubota}
  Let~\(G\) be a finite group.  If \(A\in \KK^G\) is such that
  \(\Res_G^H(A) \cong 0\) for all cyclic subgroups \(H\subseteq G\),
  then already \(A\cong 0\) in \(\KK^G\).
\end{theorem}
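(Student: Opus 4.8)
The plan is to recast the statement as a generation result for the triangulated category~$\KK^G$ and then to isolate the genuinely non-formal input. Let $\mathcal P\subseteq\KK^G$ be the localising subcategory generated by all objects $\Ind^G_H(B)$ with $H\subseteq G$ cyclic and $B\in\KK^H$, and put
$\mathcal N\defeq\{A\in\KK^G : \Res^G_H(A)\cong 0\text{ for all cyclic }H\subseteq G\}$.
Since~$G$ is finite, $\Ind^G_H$ is left adjoint to $\Res^G_H$, so $\KK^G(\Ind^G_H B,A)\cong\KK^H(B,\Res^G_H A)$ and hence $\mathcal N=\mathcal P^{\perp}$. Because $\mathcal P$ is generated by a set of objects, Brown representability gives, for every $A\in\KK^G$, an exact triangle $\tilde A\to A\to\hat A\to\Sigma\tilde A$ with $\tilde A\in\mathcal P$ and $\hat A\in\mathcal N$; consequently $\mathcal N=0$ is equivalent to $\mathcal P=\KK^G$, which is exactly the assertion of the theorem. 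Finally, the projection formula $\Ind^G_H(B)\otimes C\cong\Ind^G_H\bigl(B\otimes\Res^G_H C\bigr)$ shows that $\mathcal P$ is a tensor ideal, so $\mathcal P=\KK^G$ is in turn equivalent to the single statement $\C\in\mathcal P$. This is what I would aim to prove.

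I would prove $\C\in\mathcal P$ by induction on $\abs G$. If~$G$ is cyclic there is nothing to do, since $\C=\Ind^G_G(\C)$. Suppose~$G$ is not cyclic and write $\hat\C\in\mathcal N$ for the cone of $\tilde\C\to\C$. For every proper subgroup $K\subsetneq G$ one has $\Res^K_C(\Res^G_K\hat\C)=\Res^G_C\hat\C=0$ for all cyclic $C\subseteq K$, so the inductive hypothesis (applied to~$K$, as $\abs K<\abs G$) yields $\Res^G_K\hat\C\cong 0$ for \emph{every} proper~$K$. Now invoke Brauer's induction theorem: $1=\sum_i n_i\,\Ind^G_{E_i}(\lambda_i)$ in the representation ring $R(G)=\KK^G_0(\C,\C)$, with the $E_i$ Brauer\nb-elementary subgroups and the $\lambda_i$ one\nb-dimensional characters. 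Using the two adjunctions between $\Res^G_{E_i}$ and $\Ind^G_{E_i}$, each class $\Ind^G_{E_i}(\lambda_i)$, viewed as a $\KK^G$\nb-endomorphism of~$\C$, factors as $\C\to\Cont(G/E_i)\to\Cont(G/E_i)\to\C$; hence $\mathrm{id}_\C$, and so $\mathrm{id}_{\hat\C}$, factors through a finite direct sum of copies of the objects $\Cont(G/E_i)\otimes\hat\C\cong\Ind^G_{E_i}(\Res^G_{E_i}\hat\C)$. If every~$E_i$ is proper in~$G$, these objects vanish, so $\hat\C\cong 0$, the map $\tilde\C\to\C$ is an isomorphism, and $\C\in\mathcal P$.

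This reduces everything to the case in which~$G$ is itself Brauer\nb-elementary and not cyclic, that is, $G=C\times P$ with~$C$ cyclic of order prime to~$p$ and~$P$ a non-cyclic $p$\nb-group; I expect this to be the main obstacle. Here neither Artin nor Brauer induction helps: for a $p$\nb-group every Brauer\nb-elementary subgroup is the whole group, so $\mathrm{id}_\C\in R(P)$ cannot be routed through proper subgroups at the level of representation rings. What is needed is a genuinely $\KK$\nb-theoretic ingredient — in effect an equivariant Atiyah--Segal\nb-type completion statement for the family~$\mathcal F$ of cyclic subgroups: one takes a $G$\nb-CW model of the classifying space $E\mathcal F G$, which is assembled from orbits $G/H$ with~$H$ cyclic (so that $\Cont_0(E\mathcal F G)$, hence $\Cont_0(E\mathcal F G)\otimes A$, lies in~$\mathcal P$), and shows that the canonical assembly morphism $\Cont_0(E\mathcal F G)\to\C$ in~$\KK^G$ is an isomorphism. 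It is precisely this last step that is not formal — the analogous statement fails for the trivial family, which is the classical failure of Atiyah--Segal without completion — so it genuinely uses the structure of the family of cyclic subgroups and of $p$\nb-groups; this is the content of the theorem of Arano and Kubota that I would appeal to, and granting it the reductions above finish the proof. (One could instead try an inductive argument through a central $\Z/p\subseteq P$, using that the residual action on a crossed product by a central subgroup is inner and that Takai duality recovers~$A$ from $A\rtimes(\Z/p)$ up to $\otimes\,\Comp$; but keeping track of restrictions to non-normal subgroups along the way looks delicate, and I would regard the completion-theoretic route as the clean one.)
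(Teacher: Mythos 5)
Your proposal does not actually prove the theorem: at the decisive point it appeals to the very result it is supposed to establish. The formal reductions are sound in outline --- identifying $\mathcal{N}\defeq\{A : \Res^G_H A\cong 0\text{ for all cyclic }H\}$ with $\mathcal{P}^\perp$ via the $\Ind\dashv\Res$ adjunctions, using the projection formula to reduce $\mathcal{P}=\KK^G$ to $\C\in\mathcal{P}$, and the induction on $\abs{G}$ combined with Brauer induction, which correctly kills $\hat\C$ whenever all Brauer-elementary subgroups occurring are proper. But the residual case, $G$ Brauer-elementary and not cyclic (essentially a non-cyclic $p$\nb-group times a cyclic group of coprime order), is exactly where the statement has content, and there you write that the required completion statement ``is the content of the theorem of Arano and Kubota that I would appeal to''. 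That is circular: the statement under review \emph{is} the Arano--Kubota result. The paper itself gives no proof either --- Theorem~\ref{the:Arano-Kubota} is imported verbatim from Arano and Kubota's Corollary~3.13.(1), whose proof rests on their Atiyah--Segal-type completion theorem for the family of cyclic subgroups --- so nothing in your sketch (nor in this paper) substitutes for that genuinely K\nb-theoretic input; the alternative route you mention through a central $\Z/p$ and Takai duality is likewise not carried out.

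Two further remarks. First, ``Brown representability'' is not available off the shelf here: $\KK^G$ has only countable coproducts, so the existence of the approximation triangle $\tilde A\to A\to\hat A$ with $\tilde A\in\mathcal{P}$ and $\hat A\in\mathcal{P}^\perp$ must instead be obtained from the relative homological machinery (enough $\ideal$\nb-projectives supplied by the adjunction, together with the complementary-pair theorem), which is precisely how the paper argues in the proof of Theorem~\ref{the:generate_A_from_induced}. Second, note that the paper runs that argument in the opposite direction: it takes the Arano--Kubota vanishing as input and deduces the generation statement $\mathcal{P}=\KK^G$. Your reductions essentially re-derive the equivalence between the vanishing statement and the generation/completion statement, which is a reasonable sanity check, but as a proof of Theorem~\ref{the:Arano-Kubota} the proposal has a genuine gap at its core case.
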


In fact, the statement in~\cite{Arano-Kubota:Atiyah-Segal} is more
general.  First, it allows~\(G\) to be a compact Lie group.
Secondly, it allows \(A\) and~\(B\) to be
\(\sigma\)\nb-\(\Cst\)-algebras instead of \(\Cst\)\nb-algebras.

\subsection{Homological algebra in \texorpdfstring{$\KK^G$}{KKG}}
\label{sub:hom}

The main result of this paper is based on a Universal Coefficient
Theorem for \(\KK^G\), and this fits in the context of relative
homological algebra in a triangulated category~\(\mathfrak{T}\) (see
\cites{Beligiannis:Relative, Christensen:Ideals,
  dellAmbrogio:Cell_G, Meyer:Homology_in_KK_II,
  Meyer-Nest:Homology_in_KK}).  Accordingly, we recall some facts
from the general theory.  As in every non-abelian category, doing
homological algebra in a triangulated category requires extra
structure.  We usually specify this through a stable homological
functor.

A \emph{stable additive category} is an additive
category~$\mathfrak A$ with an auto-equivalence functor
$\Sigma\colon\mathfrak A\to \mathfrak A$, which is called the
\emph{suspension} in~\(\mathfrak{A}\).  A \emph{stable homological
  functor} from a triangulated category~\(\mathfrak{T}\) to a stable
abelian category~\(\mathfrak{A}\) is a functor
\(F\colon \mathfrak{T} \to \mathfrak{A}\) that maps exact triangles
in~\(\mathfrak{T}\) to exact sequences in~\(\mathfrak{A}\) and that
commutes with the suspension up to a natural isomorphism.
The \emph{kernel on morphisms} of~\(F\) is the family of subgroups
\(\ker F(A,B) \defeq \{\phi\in \mathfrak{T}(A,B)\mid F(\phi)=0\}\).
This is an ideal in~\(\mathfrak{T}\), and an ideal of
this form for a stable homological functor~\(F\) is called a
\emph{stable homological ideal}.  If
\(F\colon \mathfrak{T}\to \mathfrak{D}\) is a triangulated functor
to another triangulated category, then the kernel on morphisms is a
stable homological ideal as well
(see~\cite{Meyer-Nest:Homology_in_KK}).  Such homological ideals
play an important role in the localisation approach to the
Baum--Connes assembly map developed in~\cite{Meyer-Nest:BC} and will
also be crucial below.

Another homological functor~$H$ is called \emph{$\ideal$\nb-exact}
if $H(\phi)=0$ for all $\phi\in\ideal$.  An \emph{$\ideal$\nb-exact}
stable homological functor
$U\colon \mathfrak T\to \mathfrak{A}_{\ideal}$ is called
\emph{universal} if any $\ideal$-exact stable homological functor
$H\colon \mathfrak T\to \mathfrak A$ factors uniquely as
\(\bar{H}\circ U\) for a stable exact functor
$\bar{H}\colon \mathfrak{A}_{\ideal}\to \mathfrak A$.  Such a
universal functor often exists, and then the homological algebra
in~\(\mathfrak{T}\) is very closely related to homological algebra
in the abelian category~$\mathfrak{A}_{\ideal}$.  In particular, the
derived functors in~\(\mathfrak{T}\) are those
in~\(\mathfrak{A}_\ideal\) composed with~\(U\).

Assume~$\mathfrak{T}$ to have countable coproducts.  An object
$C\in\mathfrak T$ is called \emph{$\aleph_1$-compact} if the functor
$\mathfrak{T}(C,{-})\colon \mathfrak T\to \mathfrak{Ab}$ commutes
with countable coproducts.  Let~$\mathfrak{C}$ be an at most
countable set of $\aleph_1$-compact objects in~$\mathfrak{T}$, such
that \(\mathfrak{T}_n(C,A)\defeq\mathfrak{T}(\Sigma^n C, A) \) is
countable for all $A \in \mathfrak{T}$, \(n\in\Z\).  Let
\(\mathfrak{Ab}^{\mathbb{Z}}\) denote the abelian category of
\(\mathbb{Z}\)-graded abelian groups with the suspension
homomorphism shifting degrees.  Define the functor
\[
  F_{\mathfrak{C}}\colon\mathfrak{T} \to
  \prod_{C\in \mathfrak{C}}\mathfrak{Ab}^{\mathbb{Z}}, \qquad
  A \mapsto \bigl(\mathfrak{T}_n(C,A)\bigr)
  _{C \in \mathfrak{C},n\in\mathbb{Z}}.
\]
Let~$\ideal_{\mathfrak{C}}$ be the kernel on morphisms
of~\(F_{\mathfrak{C}}\).  Let
$\langle \mathfrak C\rangle \subseteq \mathfrak{T}$ be the smallest
triangulated subcategory of~$\mathfrak T$ containing~$\mathfrak C$
and closed under countable coproducts.
We are going to describe the universal
\(\ideal_{\mathfrak{C}}\)-exact stable homological functor.
Let~$\mathfrak{C}$ also denote the \(\mathbb{Z}\)\nb-graded
pre-additive category with~\(\mathfrak{C}\) as its object space and
groups of arrows \(\bigoplus_{n\in\Z} \mathfrak{T}_n(A,B)\) for
\(A,B\in\mathfrak{C}\).  A \emph{right \(\mathfrak{C}\)-module} is
defined as a contravariant stable additive functor
\(\mathfrak{C} \to \mathfrak{Ab}^{\mathbb{Z}}\).  These modules form
a stable abelian category with direct sums and enough projective
objects, which we denote by
$\mathfrak{Mod}(\mathfrak{C}^{\mathrm{op}})$.  The subcategory of
countable modules is denoted by
$\mathfrak{Mod}(\mathfrak{C}^{\mathrm{op}})_{\aleph_1}$.  Giving
$\bigl(\mathfrak{T}_n(C,A)\bigr)_{n\in\Z}$ the right
$\mathfrak{C}$\nb-module structure coming from the composition
in~$\mathfrak{T}$, we enrich~$F_{\mathfrak{C}}$ to a functor
\[
  U_{\mathfrak{C}}\colon \mathfrak{T} \rightarrow
  \mathfrak{Mod}(\mathfrak{C}^{\mathrm{op}})_{\aleph_1}.
\]

\begin{lemma}
  \label{lem:universal_functor}
  The universal $\mathfrak{I}_{\mathfrak{C}}$-exact stable
  homological functor is~\(U_\mathfrak{C}\).
\end{lemma}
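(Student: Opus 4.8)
My plan is to treat separately the essentially formal fact that \(U_\mathfrak{C}\) is a stable homological functor with \(\ker U_\mathfrak{C}=\mathfrak{I}_\mathfrak{C}\), and the universal property. For the first, note that the underlying functor of \(U_\mathfrak{C}\) with values in \(\mathbb{Z}\)\nb-graded abelian groups is the product \(F_\mathfrak{C}\) of the representable functors \(\mathfrak{T}_n(C,{-})\), each of which carries exact triangles to long exact sequences; since a sequence of right \(\mathfrak{C}\)\nb-modules is exact precisely when its evaluation at each \(C\in\mathfrak{C}\) in each degree is exact, \(U_\mathfrak{C}\) is homological. It is stable because the suspension of modules is precomposition with \(\Sigma\) on \(\mathfrak{T}\), and \(\ker U_\mathfrak{C}=\ker F_\mathfrak{C}=\mathfrak{I}_\mathfrak{C}\) because \(U_\mathfrak{C}\) and \(F_\mathfrak{C}\) agree on morphism groups; in particular \(U_\mathfrak{C}\) is \(\mathfrak{I}_\mathfrak{C}\)\nb-exact.

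For the universal property I would run the standard machinery of relative homological algebra in triangulated categories (see \cites{Meyer-Nest:Homology_in_KK, Meyer:Homology_in_KK_II}), for which the relevant inputs here are the following three. First, the partial Yoneda isomorphism \(\mathfrak{T}(C,B)\cong\Hom_{\mathfrak{Mod}(\mathfrak{C}^{\mathrm{op}})}\bigl(U_\mathfrak{C}(C),U_\mathfrak{C}(B)\bigr)\) for \(C\in\mathfrak{C}\), \(B\in\mathfrak{T}\), and its suspended versions: this identifies \(U_\mathfrak{C}(C)\) with the representable projective generator attached to~\(C\) and, after passing to retracts and countable coproducts of suspensions, shows that \(U_\mathfrak{C}\) restricts to an equivalence from the \(\mathfrak{I}_\mathfrak{C}\)\nb-projective objects of \(\mathfrak{T}\) onto the projective objects of \(\mathfrak{Mod}(\mathfrak{C}^{\mathrm{op}})_{\aleph_1}\). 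Second, \(\mathfrak{Mod}(\mathfrak{C}^{\mathrm{op}})_{\aleph_1}\) has enough projectives: for a countable module~\(M\), summing over all \(C\in\mathfrak{C}\), \(n\in\mathbb{Z}\) and all elements of the countable group \(M(C)\) in degree~\(n\) the corresponding Yoneda maps \(U_\mathfrak{C}(\Sigma^n C)\to M\) gives a projective epimorphism onto~\(M\) from a countable coproduct of suspended representables, and countability of all \(\mathfrak{T}_n(C,A)\) keeps us inside \(\mathfrak{Mod}(\mathfrak{C}^{\mathrm{op}})_{\aleph_1}\). Third, \(\mathfrak{T}\) has enough \(\mathfrak{I}_\mathfrak{C}\)\nb-projectives, so every \(B\in\mathfrak{T}\) admits an \(\mathfrak{I}_\mathfrak{C}\)\nb-projective resolution \(\dotsb\to P_1\to P_0\to B\) built from a diagram of exact triangles and \(\mathfrak{I}_\mathfrak{C}\)\nb-epimorphisms; since \(\mathfrak{I}_\mathfrak{C}\)\nb-epimorphisms become epimorphisms under any \(\mathfrak{I}_\mathfrak{C}\)\nb-exact homological functor, each such triangle breaks into a short exact sequence, so \(U_\mathfrak{C}\) sends the resolution to a projective resolution \(\dotsb\to U_\mathfrak{C}(P_1)\to U_\mathfrak{C}(P_0)\to U_\mathfrak{C}(B)\to0\) of \(U_\mathfrak{C}(B)\), while any \(\mathfrak{I}_\mathfrak{C}\)\nb-exact homological \(H\) sends it to an exact resolution of \(H(B)\).

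Granting these, for an \(\mathfrak{I}_\mathfrak{C}\)\nb-exact stable homological functor \(H\colon\mathfrak{T}\to\mathfrak{A}\) I would set \(\bar H(U_\mathfrak{C}(P))\defeq H(P)\) on projective modules, which is well defined and functorial by the first input, and for a general countable module~\(M\) pick a projective presentation \(Q_1\to Q_0\to M\to0\) and put \(\bar H(M)\defeq\operatorname{coker}\bigl(\bar H(Q_1)\to\bar H(Q_0)\bigr)\); the comparison theorem for projective presentations makes this choice-independent and functorial. Exactness and stability of~\(\bar H\) follow by lifting a projective resolution of~\(M\) to an \(\mathfrak{I}_\mathfrak{C}\)\nb-exact complex of \(\mathfrak{I}_\mathfrak{C}\)\nb-projectives in \(\mathfrak{T}\) and using, as in the third input, that \(H\) turns it into an exact resolution, computing \(\bar H(M)\) in degree~\(0\) and vanishing above it. The factorisation \(\bar H\circ U_\mathfrak{C}\cong H\) is the third input applied to resolutions of objects of \(\mathfrak{T}\), and uniqueness of~\(\bar H\) is forced, since a stable exact functor out of \(\mathfrak{Mod}(\mathfrak{C}^{\mathrm{op}})_{\aleph_1}\) is determined by its restriction to the projective generators \(U_\mathfrak{C}(C)\). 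The step I expect to be most delicate is the third input — producing enough \(\mathfrak{I}_\mathfrak{C}\)\nb-projectives within the \(\aleph_1\)\nb-cardinality constraints and realising module resolutions by \(\mathfrak{I}_\mathfrak{C}\)\nb-exact complexes in \(\mathfrak{T}\) — though this is by now a routine part of the theory.
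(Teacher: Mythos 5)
Your argument is correct and is essentially the argument the paper relies on: the paper's proof is just a citation to the proof of Theorem~4.4 in the Meyer--Nest filtrated K-theory paper, which verifies universality by exactly the inputs you list (graded Yoneda identifying \(U_\mathfrak{C}\) of the generators with representable projectives, enough projectives in \(\mathfrak{Mod}(\mathfrak{C}^{\mathrm{op}})_{\aleph_1}\) via countable coproducts of suspended representables, enough \(\ideal_\mathfrak{C}\)-projectives in \(\mathfrak{T}\) from \(\aleph_1\)-compactness and the countability hypotheses, and then the general criterion/construction of \(\bar H\)). So you have in effect written out the cited proof rather than found a different route.
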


\begin{proof}
  This is shown during the proof of
  \cite{Meyer-Nest:Filtrated_K}*{Theorem~4.4}.
\end{proof}

\begin{theorem}
  \label{thm:trspectral}
  Let~$\mathfrak T$ be a triangulated category with countable
  coproducts and let $\mathfrak C \subseteq \mathfrak{T}$ be a set
  of $\aleph_1$-compact objects.  Let
  $A\in \langle \mathfrak C\rangle$ and $B \in \mathfrak T$.  Then
  there is a natural, cohomologically indexed, right half-plane,
  conditionally convergent spectral sequence of the form
  \[
    E^{p,q}_2 =
    \Ext^p_{\mathfrak{Mod}(\mathfrak{C}^{\mathrm{op}})}
    \bigl(U_{\mathfrak{C}}(A),U_{\mathfrak{C}}(B)\bigr)_{-q}
    \Rightarrow
    \mathfrak T_{p+q}(A,B).
  \]
  If the object \(U_{\mathfrak{C}}(A)\) has a projective resolution
  of length~$1$, then there is a natural short exact sequence
  \[
    \Ext_{{\mathfrak C}}^1
    (U_{\mathfrak{C}}(\Sigma A),U_{\mathfrak{C}}(B))
    \into \mathfrak{T}(A,B)
    \prto \Hom_{{\mathfrak C}}
    (U_{\mathfrak{C}}(A),U_{\mathfrak{C}}(B)).
  \]
\end{theorem}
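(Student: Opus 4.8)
The plan is to deduce the statement from the standard machinery of relative homological algebra in triangulated categories, following \cites{Meyer-Nest:Homology_in_KK,Meyer:Homology_in_KK_II,Meyer-Nest:Filtrated_K}, with Lemma~\ref{lem:universal_functor} as the bridge between \(\ideal_{\mathfrak{C}}\)-relative homological algebra in~\(\mathfrak{T}\) and ordinary homological algebra in the abelian category \(\mathfrak{Mod}(\mathfrak{C}^{\mathrm{op}})\).  First I would build an \(\ideal_{\mathfrak{C}}\)-projective resolution of~\(A\) in the shape of an Adams (``phantom'') tower: a diagram of exact triangles
\[
  A = N_0 \leftarrow N_1 \leftarrow N_2 \leftarrow \cdots, \qquad
  N_{n+1}\to N_n\to P_n\to \Sigma N_{n+1},
\]
where each~\(P_n\) is a countable coproduct of suspensions of objects of~\(\mathfrak{C}\) and each map \(N_n\to P_n\) is an \(\ideal_{\mathfrak{C}}\)-epimorphism, so that \(N_{n+1}\to N_n\) lies in~\(\ideal_{\mathfrak{C}}\).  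Such towers exist because \(\mathfrak{Mod}(\mathfrak{C}^{\mathrm{op}})\) has enough projectives and the free modules are coproducts of representable functors; applying the homological, \(\ideal_{\mathfrak{C}}\)-exact functor~\(U_{\mathfrak{C}}\) then turns the tower into a resolution \(\cdots\to \Sigma^{-1}U_{\mathfrak{C}}(P_1)\to U_{\mathfrak{C}}(P_0)\to U_{\mathfrak{C}}(A)\to 0\) by \emph{projective} modules, the point being that \(\aleph_1\)-compactness of the objects of~\(\mathfrak{C}\) makes~\(U_{\mathfrak{C}}\) commute with the countable coproducts defining the~\(P_n\), so that each \(U_{\mathfrak{C}}(P_n)\) is a coproduct of representables.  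The decisive extra input is that \(A\in\langle\mathfrak{C}\rangle\): this is exactly what guarantees that the cellular approximation extracted from the tower converges to~\(A\), equivalently that \(\operatorname{holim} N_n = 0\).

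Second, I would apply the cohomological functor \(\mathfrak{T}({-},B)\) to the tower.  The long exact sequences of the triangles assemble into an exact couple, and hence into a spectral sequence; it is cohomologically indexed and lies in the right half-plane since one may take \(P_n = 0\) for \(n<0\).  Its \(E_1\)-term is \(\mathfrak{T}_*(P_p,B)\), which in the relevant degree equals \(\Hom_{\mathfrak{C}}\bigl(U_{\mathfrak{C}}(P_p),U_{\mathfrak{C}}(B)\bigr)\) because \(\mathfrak{T}({-},B)\) carries the defining coproduct of~\(P_p\) to a product and \(U_{\mathfrak{C}}(P_p)\) is a coproduct of representables; moreover the \(d_1\)-differential is the one obtained by applying \(\Hom_{\mathfrak{C}}\bigl({-},U_{\mathfrak{C}}(B)\bigr)\) to the projective resolution of \(U_{\mathfrak{C}}(A)\).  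Hence \(E_2^{p,q}=\Ext^p_{\mathfrak{Mod}(\mathfrak{C}^{\mathrm{op}})}\bigl(U_{\mathfrak{C}}(A),U_{\mathfrak{C}}(B)\bigr)_{-q}\), independently of the chosen resolution by the comparison theorem for projective resolutions; the same comparison, applied to a morphism \(A\to A'\) and a lift of it to a map of towers, gives naturality in~\(A\), while naturality in~\(B\) is immediate.  Conditional convergence to \(\mathfrak{T}_{p+q}(A,B)\) then follows from \(\operatorname{holim} N_n = 0\): this identifies the abutment with \(\mathfrak{T}_{*}(A,B)\) through the Milnor \(\lim^1\)-sequence attached to the tower, and conditional convergence in Boardman's sense is precisely the bookkeeping of that \(\lim^1\)-term.

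Finally, if \(U_{\mathfrak{C}}(A)\) admits a projective resolution of length~\(1\), the comparison theorem lets us truncate the tower so that \(P_n=0\) for \(n\ge 2\) — here one uses that each~\(N_n\) lies in~\(\langle\mathfrak{C}\rangle\) and that an object of~\(\langle\mathfrak{C}\rangle\) on which \(U_{\mathfrak{C}}\) vanishes is itself zero, since it then lies both in \(\langle\mathfrak{C}\rangle\) and in its right orthogonal.  Then \(E_2^{p,q}=0\) unless \(p\in\{0,1\}\), and a spectral sequence concentrated in two adjacent columns degenerates at~\(E_2\) into short exact sequences \(0\to E_2^{1,n-1}\to \mathfrak{T}_n(A,B)\to E_2^{0,n}\to 0\).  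For \(n=0\) this reads \(0\to \Ext^1_{\mathfrak{C}}\bigl(U_{\mathfrak{C}}(A),U_{\mathfrak{C}}(B)\bigr)_{1}\to \mathfrak{T}(A,B)\to \Hom_{\mathfrak{C}}\bigl(U_{\mathfrak{C}}(A),U_{\mathfrak{C}}(B)\bigr)\to 0\); and since \(U_{\mathfrak{C}}\) intertwines the suspensions, \(\Ext^1_{\mathfrak{C}}(U_{\mathfrak{C}}(A),U_{\mathfrak{C}}(B))_{1}\cong \Ext^1_{\mathfrak{C}}(U_{\mathfrak{C}}(\Sigma A),U_{\mathfrak{C}}(B))\), which gives the asserted sequence.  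The step I expect to be the main obstacle is the first one: constructing the phantom tower with the required coproduct and projectivity properties and, above all, establishing \(\operatorname{holim} N_n=0\) for \(A\in\langle\mathfrak{C}\rangle\) — this is the one genuinely geometric ingredient that upgrades a formal exact couple to a convergent spectral sequence, whereas the identification of the \(E_2\)-page and of the short exact sequence is then formal manipulation with the universal property of~\(U_{\mathfrak{C}}\).
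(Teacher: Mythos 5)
Your proposal is correct and follows essentially the same route as the paper, which proves this theorem purely by citation to \cite{dellAmbrogio:Cell_G}*{Theorem~5.15} and \cite{Meyer-Nest:Homology_in_KK}*{Theorem~4.4}: the phantom (Adams) tower built from enough \(\ideal_{\mathfrak{C}}\)-projectives, the exact couple obtained by applying \(\mathfrak{T}({-},B)\), the identification of the \(E_1\)/\(E_2\)-pages via \(\aleph_1\)-compactness and the Yoneda description of \(\Hom_{\mathfrak{C}}(U_{\mathfrak{C}}(P),U_{\mathfrak{C}}(B))\), convergence from \(A\in\langle\mathfrak{C}\rangle\), and the two-column degeneration in the hereditary case are exactly the arguments of those references. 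One small slip to fix: in your tower the orientations are mixed --- the \(\ideal_{\mathfrak{C}}\)-epimorphisms should be \(P_n\to N_n\), with the phantom maps \(N_n\to\Sigma N_{n+1}\) lying in \(\ideal_{\mathfrak{C}}\) (as written, \(N_n\to P_n\) being an \(\ideal_{\mathfrak{C}}\)-epimorphism puts the connecting map \(P_n\to\Sigma N_{n+1}\), not \(N_{n+1}\to N_n\), in the ideal, and applying \(U_{\mathfrak{C}}\) to your arrows would yield a coresolution \(U_{\mathfrak{C}}(A)\to U_{\mathfrak{C}}(P_0)\to\dotsb\) rather than the projective resolution you then use); with the standard orientation the rest of your argument goes through unchanged.
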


\begin{proof}
  These statements are contained in
  \cite{dellAmbrogio:Cell_G}*{Theorem~5.12} and
  \cite{Meyer-Nest:Homology_in_KK}*{Theorem~4.4}.
\end{proof}

\begin{example}
  Let \(\mathfrak{T} = \KK\) and \(\mathfrak{C} = \{\mathbb{C}\}\).
  Then \(\langle \mathfrak C\rangle\) is the well known bootstrap
  class, and the universal functor \(U_{\mathfrak{C}}\) is K-theory,
  viewed as a functor to the stable abelian category
  \(\mathfrak{Ab}^{\mathbb{Z}/2}_{\aleph_1}\) of countable
  \(\mathbb{Z}/2\)-graded abelian groups.  This has global
  homological dimension~$1$.  So the second part of
  Theorem~\ref{thm:trspectral} gives the Universal
  Coefficient Theorem (UCT) of Rosenberg and
  Schochet~\cite{Rosenberg-Schochet:Kunneth}.
\end{example}

\begin{remark}
  The suspension functor $\Sigma = \Cont_0(\mathbb R)\otimes {-}$ in
  $\KK^G$ squares to the identity.  In this situation, the
  \(\mathbb{Z}\)\nb-graded modules in the above discussion become
  \(\mathbb{Z}/2\)-graded.
\end{remark}

The following example is crucial for us.  Fix a finite group~\(G\)
and a conjugation-invariant family~\(\mathcal{F}\) of subgroups of~\(G\).
Let \(\mathfrak{T} = \KK^G\) and let \(\mathfrak{C}\subseteq \KK^G\)
consist of \(\Cont(G/H)\) with the \(G\)\nb-action by translation,
for all subgroups \(H\in\mathcal{F}\).  Since
\(\Cont(G/H) = \Ind_H^G \C\) and \(\Ind_H^G\) is left adjoint to
\(\Res_G^H\), we compute
\[
  \KK^G_*(\Cont(G/H),B)
  \cong \KK^H_*(\C,B)
  \cong \K_*(B\rtimes H).
\]
Consequently, \(\mathfrak{C}\) consists of \(\aleph_1\)\nb-compact
objects.  So Lemma~\ref{lem:universal_functor} applies.  To describe
the universal exact functor in this case, it mostly remains to
understand the arrows in~\(\KK^G\) between the generators
\(\Cont(G/H)\) for \(H\in\mathcal{F}\).  This was done by
Dell'Ambrogio in~\cite{dellAmbrogio:Cell_G}.  He shows that the
family of \(\Z/2\)-graded countable Abelian groups
\(\KK^G_*(\Cont(G/H),B)\) carries the extra structure of a Mackey
module over the representation Green ring~$R^G$ of~\(G\).  We denote
this Mackey module by~\(\mathrm{k}_*^G(B)\)
(see~\cite{Webb:Guide_to_Mackey_functors} for a general, brief
introduction to Mackey and Green functors).  We will do some
computations with Mackey modules in the proofs below and give more
details when they are needed.  It is shown
in~\cite{dellAmbrogio:Cell_G} that the functor~\(\mathrm{k}_*^G\) to
the category \(R^G\textup{-Mac}_{\Z/2,\aleph_1}\) of countable
$\Z/2$-graded Mackey modules over the representation Green
ring~$R^G$ of~\(G\) is the universal homological invariant for the
homological ideal~\(\ideal_{\mathfrak C}\).  In particular, the
following theorem holds:

\begin{theorem}[Dell'Ambrogio~\cite{dellAmbrogio:Cell_G}*{Theorem
    4.9}]
  \label{thm:ivo}
  The restriction of $\mathrm{k}^G\colon \KK^G \to R^G\textup{-Mac}$
  to the full subcategory $\{\Cont(G/H)\mid H\subseteq G\}$ of\/
  $\KK^G$ is fully faithful, that is, for all pairs of subgroups
  \(H,L\subseteq G\) there are canonical isomorphisms
  \[
    \KK^G(\Cont(G/H),\Cont(G/L))\xrightarrow{\mathrm{k}^G}
    R^G\textup{-Mac}(\mathrm{k}^G\Cont(G/H),\mathrm{k}^G\Cont(G/L)).
  \]
\end{theorem}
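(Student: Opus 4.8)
The plan is to deduce full faithfulness from a Yoneda argument, after identifying the morphism groups in $\KK^G$ among the objects $\Cont(G/H)$ with the representation Green functor evaluated on products of orbits.

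\emph{Step 1: computing the morphism groups.} Since $G$ is finite, $\Ind_H^G$ is both a left and a right adjoint to $\Res^G_H$, and $\Cont(G/H)=\Ind_H^G\C$. Using left adjointness, then the Mackey double coset decomposition $\Res^G_H\Ind_L^G\C\cong\bigoplus_{HgL\in H\backslash G/L}\Cont\bigl(H/(H\cap{}^gL)\bigr)$ in $\KK^H$, and then right adjointness of $\Ind_{H\cap{}^gL}^H$, one obtains
\[
  \KK^G_*(\Cont(G/H),\Cont(G/L))
  \cong\bigoplus_{HgL\in H\backslash G/L}\KK^{H\cap{}^gL}_*(\C,\C)
  \cong\bigoplus_{HgL\in H\backslash G/L}R(H\cap{}^gL),
\]
concentrated in even degree because $\KK^K_1(\C,\C)=0$ for a finite group~$K$. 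By the double coset decomposition $G/H\times G/L\cong\coprod_{HgL}G/(H\cap{}^gL)$ of $G$\nb-sets, the right hand side is exactly $R^G(G/H\times G/L)$. I would then check that these isomorphisms, ranging over all pairs $(H,L)$, are compatible with composition in $\KK^G$: the transfers and restrictions among the $\Cont(G/H)$ come from the adjunction units and counits and have to be matched with induction and restriction of representations. Thus the preadditive category $\mathfrak C$ with objects the $\Cont(G/H)$ and arrows $\KK^G_*(\Cont(G/H),\Cont(G/L))$ gets identified with the Hecke (span) category $\mathcal H$ having $\mathcal H(G/H,G/L)=R^G(G/H\times G/L)$ and composition given by the Green functor structure. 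This compatibility check is the step I expect to be the main obstacle, since one must push the Mackey isomorphism and several adjunction (co)units through a composite and match them against the double coset bookkeeping; it is precisely the computation carried out by Dell'Ambrogio in~\cite{dellAmbrogio:Cell_G}.

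\emph{Step 2: Mackey modules as functors.} By the standard equivalence between modules over a Green functor and additive functors on the associated Hecke category (a twisted form of Lindner's theorem), the category $R^G\textup{-Mac}$ of $\Z/2$\nb-graded Mackey modules is equivalent to $\mathfrak{Mod}(\mathcal H^{\mathrm{op}})$, and by Step~1 this is $\mathfrak{Mod}(\mathfrak C^{\mathrm{op}})$. By the definition of $\mathrm{k}^G$, the module $\mathrm{k}^G\Cont(G/L)$ is the representable functor $\Cont(G/H)\mapsto\KK^G_*(\Cont(G/H),\Cont(G/L))=\mathfrak C({-},\Cont(G/L))$, carrying its tautological right $\mathfrak C$\nb-module structure.

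\emph{Step 3: conclusion.} Under these identifications, the map induced by $\mathrm{k}^G$ on morphism groups is post-composition $f\mapsto(g\mapsto f\circ g)$, i.e. the Yoneda embedding $\mathfrak C\to\mathfrak{Mod}(\mathfrak C^{\mathrm{op}})$; injectivity is immediate by evaluating the target natural transformation at $G/H$ on $\mathrm{id}_{\Cont(G/H)}$, and surjectivity is the Yoneda lemma together with the representability from Step~2. All constructions are natural in $H$ and $L$, so this yields the asserted canonical isomorphism and completes the proof, reconstructing the argument of~\cite{dellAmbrogio:Cell_G}.
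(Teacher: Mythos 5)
The paper does not actually prove this statement --- it is quoted from \cite{dellAmbrogio:Cell_G}*{Theorem~4.9} --- so there is no internal proof to compare with; your reconstruction (the two-sided $\Ind$--$\Res$ adjunctions plus the Mackey double-coset formula to identify $\KK^G_*(\Cont(G/H),\Cont(G/L))$ with $R^G(G/H\times G/L)$ in even degree, the Bouc--Lindner identification of $R^G$-Mackey modules with modules over the resulting span/Hecke category, and the Yoneda lemma) follows exactly the route of the cited proof and is correct as a sketch. The one piece of real content --- that the double-coset isomorphisms are compatible with composition, so that the preadditive category on the objects $\Cont(G/H)$ is genuinely the Hecke category of the Green functor $R^G$, and hence $\mathrm{k}^G\Cont(G/L)$ is the representable module --- is flagged by you but deferred to Dell'Ambrogio, which is consistent with how the paper itself treats the result.
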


\section{Generators for the equivariant bootstrap class}
\label{sec:generators_bootstrap}

One way to define the bootstrap class in ordinary \(\KK\)-theory is
as the class of all separable $\Cst$\nb-algebras that are
\(\KK\)-equivalent to a commutative $\Cst$\nb-algebra.  Since all
$\Cst$\nb-algebras of Type~I belong to the bootstrap class, we may
also say that it is the class of all separable $\Cst$\nb-algebras
that are \(\KK\)-equivalent to a Type~I $\Cst$\nb-algebra.  We
choose this definition in the equivariant case.  For any compact
group~$G$, it is shown in
\cite{dellAmbrogio-Emerson-Meyer:Equivariant_Lefschetz}*{Theorem~3.10}
that a separable $G$\nb-$\Cst$\nb-algebras is $\KK^G$-equivalent to
a $G$\nb-action on a Type~I $\Cst$\nb-algebra if and only if it
belongs to the localising subcategory of $\KK^G$ that is generated
by the $G$\nb-actions on ``elementary'' $\Cst$\nb-algebras.  Here a
\(G\)\nb-action on a $\Cst$\nb-algebra is called \emph{elementary}
if it is isomorphic to $\Ind_H^G\mathbb{M}_n(\C)$ for some closed
subgroup $H \subseteq G$ and some group action of~$H$ on the matrix
algebra $\mathbb{M}_n(\C)$ (by automorphisms).  It is shown in the proof
that any \(G\)\nb-action on a $\Cst$\nb-algebra of the form
\(\bigoplus A_n\) where each~\(A_n\) is isomorphic to
\(\Comp(\mathcal{H})\) for a finite-dimensional or separable Hilbert
space~\(\mathcal{H}\) is equivariantly Morita equivalent to a direct
sum of elementary \(G\)\nb-actions.  We also call a
\(\Cst\)\nb-algebra of this form \(\bigoplus A_n\) elementary.  The
Arano--Kubota Theorem~\ref{the:Arano-Kubota} shows that many of the
above generators are redundant:

\begin{theorem}
  \label{the:generate_A_from_induced}
  Let~\(G\) be a finite group.  Then \(A\in\KK^G\) belongs to the
  localising subcategory of~\(\KK^G\) that is generated by
  \(\Cont(G/H) \otimes A\) for cyclic subgroups \(H\subseteq G\).
\end{theorem}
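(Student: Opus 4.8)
The plan is to construct a cellular approximation of~$A$ out of the proposed generators and to use the Arano--Kubota Theorem~\ref{the:Arano-Kubota} to see that it is exact. Throughout, write $\mathcal{L}_A\subseteq\KK^G$ for the localising subcategory generated by $\Cont(G/H)\otimes A$ for all cyclic subgroups $H\subseteq G$; the goal is $A\in\mathcal{L}_A$. Two elementary facts will be used repeatedly. Since $\Cont(G/H)=\Ind_H^G\C$, the projection formula gives a natural isomorphism $\Cont(G/H)\otimes A\cong\Ind_H^G(\Res^G_H A)$, and since $G$ is finite, $\Ind_H^G$ is left adjoint to $\Res^G_H$, with counit $\varepsilon^H_B\colon\Ind_H^G\Res^G_H B\to B$. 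Moreover, decomposing the finite $G$-set $G/H\times G/L$ into orbits yields an isomorphism
\[
  \Cont(G/H)\otimes\Cont(G/L)\cong\bigoplus_{g\in H\backslash G/L}\Cont\bigl(G/(H\cap gLg^{-1})\bigr),
\]
in which each subgroup $H\cap gLg^{-1}\subseteq H$ is cyclic whenever $H$ is cyclic.

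The cellular tower is built in the usual way. Set $N_0\defeq A$ and, inductively, $P_k\defeq\bigoplus_{H}\Ind_H^G\Res^G_H N_k\cong\bigoplus_{H}\Cont(G/H)\otimes N_k$, the sum running over all cyclic subgroups $H\subseteq G$ (hence finite), with $\epsilon_k\colon P_k\to N_k$ the map whose $H$-component is the counit $\varepsilon^H_{N_k}$; let $N_{k+1}\defeq\operatorname{cone}(\epsilon_k)$, giving exact triangles $P_k\xrightarrow{\epsilon_k}N_k\xrightarrow{\rho_k}N_{k+1}\to\Sigma P_k$. Splicing these with the octahedral axiom in the standard way (see \cite{Meyer-Nest:Homology_in_KK, Meyer:Homology_in_KK_II}) produces a tower $P_0=\tilde A_0\to\tilde A_1\to\tilde A_2\to\dotsb$ with compatible morphisms $f_k\colon\tilde A_k\to A$ satisfying $\operatorname{cone}(f_k)\cong N_{k+1}$. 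Since the homotopy colimit is exact and sends the constant tower~$A$ to~$A$, passing to homotopy colimits yields $f_\infty\colon\tilde A\defeq\operatorname{hocolim}_k\tilde A_k\to A$ with $\operatorname{cone}(f_\infty)\cong\operatorname{hocolim}_k N_{k+1}$, the structure maps of this last homotopy colimit being the~$\rho_k$.

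Next I would verify that $\tilde A\in\mathcal{L}_A$, by proving by induction on~$k$ that $\Cont(G/H)\otimes N_k\in\mathcal{L}_A$ for every cyclic $H\subseteq G$. The case $k=0$ is just the definition of the generators of~$\mathcal{L}_A$. For the inductive step, $\Cont(G/H)\otimes N_{k+1}$ is the cone of $\mathrm{id}_{\Cont(G/H)}\otimes\epsilon_k\colon\Cont(G/H)\otimes P_k\to\Cont(G/H)\otimes N_k$; the target lies in~$\mathcal{L}_A$ by the inductive hypothesis, and by the orbit decomposition above the source is a finite direct sum of objects $\Cont(G/K)\otimes N_k$ with $K\subseteq G$ cyclic, which lie in~$\mathcal{L}_A$ by the inductive hypothesis as well, so the cone does too. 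In particular $P_k\in\mathcal{L}_A$ for all~$k$, hence each~$\tilde A_k\in\mathcal{L}_A$, and therefore $\tilde A=\operatorname{hocolim}_k\tilde A_k\in\mathcal{L}_A$.

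Finally I would show $\operatorname{cone}(f_\infty)\cong 0$. Fix a cyclic subgroup $L\subseteq G$. The functor $\Res^G_L$ is triangulated and preserves countable coproducts, hence homotopy colimits, so $\Res^G_L\operatorname{cone}(f_\infty)\cong\operatorname{hocolim}_k\Res^G_L N_{k+1}$ with structure maps $\Res^G_L(\rho_k)$. Each $\Res^G_L(\rho_k)$ vanishes: the $H=L$ component of~$\epsilon_k$ becomes, after applying $\Res^G_L$, a split epimorphism onto $\Res^G_L N_k$ by the triangle identity for $\Ind_L^G\dashv\Res^G_L$, so $\Res^G_L(\epsilon_k)$ is a split epimorphism, and then $\rho_k\epsilon_k=0$ forces $\Res^G_L(\rho_k)=0$. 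A homotopy colimit of a sequence of zero maps is zero, so $\Res^G_L\operatorname{cone}(f_\infty)\cong 0$ for every cyclic $L\subseteq G$, whence $\operatorname{cone}(f_\infty)\cong 0$ by Theorem~\ref{the:Arano-Kubota}. Thus $f_\infty$ is an isomorphism in~$\KK^G$ and $A\cong\tilde A\in\mathcal{L}_A$, as required. The one genuine subtlety is the induction in the previous paragraph: one is forced to resolve not only~$A$ but also all of the syzygies~$N_k$, and what keeps the whole construction inside~$\mathcal{L}_A$---rather than only inside the larger localising subcategory generated by all induced algebras $\Ind_H^G B$---is precisely the closure of the family $\{\Cont(G/H)\mid H\text{ cyclic}\}$ under tensor products with one another, all the occurring intersections $H\cap gLg^{-1}$ remaining cyclic.
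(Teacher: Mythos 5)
Your proposal is correct and follows essentially the same route as the paper: a cellular (phantom) tower built from the objects $\Ind_H^G\Res^G_H({-})\cong\Cont(G/H)\otimes({-})$ for cyclic $H\subseteq G$, the double-coset orbit decomposition to keep all occurring subgroups cyclic, and the Arano--Kubota theorem to show that the homotopy colimit of the tower recovers~$A$. The only differences are minor: the paper uses the comonad bar resolution with layers $T^{k+1}(A)$, which decompose directly into the generators $\Cont(G/H)\otimes A$, and cites the complementary-pair and phantom-castle machinery of Meyer for convergence, whereas you apply $T$ to the successive syzygies $N_k$ --- which is why you need the extra induction with the orbit decomposition at every stage --- and you verify convergence by hand via the split-epimorphism argument on restrictions instead of quoting the corresponding proposition.
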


\begin{proof}
  Let \(\ideal \defeq \bigcap_H \ker (\Res_G^H)\), where the
  intersection runs over all cyclic subgroups \(H\subseteq G\).
  Since \(\Res_G^H\) has \(\Ind_H^G\) as a left adjoint functor,
  objects of the form \(\Ind_H^G(A)\) for \(A\in\KK^H\) are
  \(\ideal\)\nb-projective and there are enough
  \(\ideal\)\nb-projective objects in \(\KK^G\) (see
  \cite{Meyer-Nest:Homology_in_KK}*{Proposition~55}).  Since both
  restriction and induction functors commute with direct sums, the
  localising subcategory generated by the induced objects and the
  localising subcategory of \(\ideal\)\nb-contractible objects are a
  complementary pair by
  \cite{Meyer:Homology_in_KK_II}*{Theorem~3.16}.
  Theorem~\ref{the:Arano-Kubota} says that any
  \(\ideal\)\nb-contractible object is already~\(0\).  This means
  that the induced objects generate all of \(\KK^G\).

  Next, we build a specific \(\ideal\)\nb-projective
  resolution of~\(A\).  First, \(\ideal\) is the kernel on morphisms
  of the triangulated functor
  \[
    (\Res_G^H)_{H\ \mathrm{cyclic}}\colon
    \KK^G \to \prod_{H\ \mathrm{cyclic}} \KK^H.
  \]
  This functor has a left adjoint, namely, the functor
  \(\prod_H \KK^H \to \KK^G\),
  \((A_H) \mapsto \bigoplus_H \Ind_H^G (A_H)\).  Then the
  functor
  \[
    T\colon \KK^G \to \KK^G,\qquad
    A\mapsto \bigoplus_{H\ \mathrm{cyclic}} \Ind_H^G \Res_G^H (A)
  \]
  with the counit of the adjunction
  \(\varepsilon \colon T \Rightarrow \mathrm{id}_{\KK^G}\) and the
  comultiplication \(T \Rightarrow T^2\) induced by the unit of the
  adjunction is a comonad in~\(\KK^G\).  Now we can build the bar
  resolution of~\(A\) with the objects \(T^{n+1}(A)\) and the
  boundary map
  \(\sum_{j=1}^{n+1} (-1)^j \varepsilon_j\colon T^n(A) \to
  T^{n-1}(A)\), where~\(\varepsilon_j\) is the whiskering of
  \(\varepsilon\colon T\Rightarrow \mathrm{id}_{\KK^G}\)
  by~\(T^{j-1}\) on the left and~\(T^{n-j}\) on the right
  (see~\cite{Barr-Beck:Homology} for the construction and properties
  of the bar resolution in this generality).

  Since the objects of the form \(T(A)\) are all
  \(\ideal\)\nb-projective, the bar resolution above is an
  \(\ideal\)\nb-projective resolution of~\(A\).  Next, we build a
  ``phantom castle'' from this \(\ideal\)\nb-projective resolution
  as in \cite{Meyer:Homology_in_KK_II}*{Section~3}.  This contains
  \(\ideal\)\nb-cellular approximations of~\(A\), and their homotopy
  colimit is isomorphic to~\(A\) by
  \cite{Meyer:Homology_in_KK_II}*{Proposition~3.18} because all
  \(\ideal\)\nb-contractible objects are~\(0\).  It follows
  that~\(A\) belongs to the localising subcategory of \(\KK^G\) that
  is generated by \(T^k(A)\) for \(k\ge1\).

  By construction, \(T^k(A)\) is the direct sum of the tensor
  products
  \[
    \Cont(G/H_1) \otimes \Cont(G/H_2) \otimes \dotsb \otimes
    \Cont(G/H_k) \otimes A
    \cong
    \Cont(G/H_1 \times G/H_2 \times \dotsb \times G/H_k, A)
  \]
  for cyclic subgroups \(H_1,\dotsc,H_k\subseteq G\).  Decomposing
  \(G/H_1 \times \dotsb \times G/H_k\) into orbits, we further
  decompose this as a direct sum of \(\Cont(G/H,A)\) where
  \(H\subseteq G\) is the stabiliser of an orbit representative.
  Each such stabiliser will be contained in a group that is
  conjugate to~\(H_1\), making it cyclic as well.  Therefore,
  \(T^k(A)\) is isomorphic to a direct sum of \(\Cont(G/H,A)\) for
  cyclic subgroups \(H\subseteq G\).
\end{proof}

\begin{corollary}
  An object~\(A\) in \(\KK^G\) belongs to the equivariant bootstrap
  class if and only if \(\Res_G^H(A)\) belongs to the equivariant
  bootstrap class in \(\KK^H\) for each cyclic subgroup
  \(H\subseteq G\).
\end{corollary}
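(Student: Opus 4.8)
The plan is to prove the two implications separately. The ``only if'' direction is immediate: if $A$ lies in the equivariant bootstrap class in $\KK^G$, then $A$ is $\KK^G$\nb-equivalent to a $G$\nb-action on some Type~I $\Cst$\nb-algebra~$D$. Since $\Res^G_H$ is a functor, it sends this isomorphism to an isomorphism $\Res^G_H(A)\cong\Res^G_H(D)$ in $\KK^H$, and the underlying $\Cst$\nb-algebra of $\Res^G_H(D)$ is again~$D$, hence Type~I. So $\Res^G_H(A)$ lies in the equivariant bootstrap class in $\KK^H$ for every subgroup~$H$, in particular for the cyclic ones.

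For the ``if'' direction, suppose $\Res^G_H(A)$ lies in the equivariant bootstrap class in $\KK^H$ for all cyclic subgroups $H\subseteq G$. By Theorem~\ref{the:generate_A_from_induced}, $A$ belongs to the localising subcategory of $\KK^G$ generated by the objects $\Cont(G/H)\otimes A$ for cyclic~$H$. I would then use the projection formula
\[
  \Cont(G/H)\otimes A = \Ind^G_H(\C)\otimes A \cong \Ind^G_H\bigl(\Res^G_H(A)\bigr),
\]
a standard identity for induction in equivariant $\KK$\nb-theory (see \cite{Meyer-Nest:BC}), to reduce to showing that $\Ind^G_H\bigl(\Res^G_H(A)\bigr)$ lies in the equivariant bootstrap class in $\KK^G$. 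By the description recalled at the start of Section~\ref{sec:generators_bootstrap}, $\Res^G_H(A)$ lies in the localising subcategory of $\KK^H$ generated by the elementary $H$\nb-algebras. Since $\Ind^G_H$ is triangulated and preserves countable coproducts, it maps this into the localising subcategory of $\KK^G$ generated by the objects $\Ind^G_H(\Ind^H_L M)\cong\Ind^G_L M$ obtained from elementary $H$\nb-algebras by induction in stages; these are elementary $G$\nb-algebras and hence lie in the equivariant bootstrap class in $\KK^G$. As the latter is a localising subcategory, $\Ind^G_H\bigl(\Res^G_H(A)\bigr)$ lies in it, and therefore so does~$A$.

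I do not expect a genuine obstacle here: the whole content of the statement is already carried by Theorem~\ref{the:generate_A_from_induced} and, behind it, the Arano--Kubota theorem. The only place needing a little care is the projection formula $\Cont(G/H)\otimes A\cong\Ind^G_H\Res^G_H(A)$; if one prefers not to cite it, one can instead argue directly from the comonad~$T$ in the proof of Theorem~\ref{the:generate_A_from_induced}, observing that each $T^k(A)$ is assembled by applying induction functors to conjugates of restrictions of~$A$ to cyclic subgroups (which stay in the bootstrap class, since both restriction and conjugation preserve Type~I $\Cst$\nb-algebras), so each $T^k(A)$ lies in the equivariant bootstrap class in $\KK^G$.
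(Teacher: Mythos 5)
Your proof is correct and is essentially the paper's argument: the ``only if'' direction follows since restriction preserves Type~I algebras, and the ``if'' direction combines Theorem~\ref{the:generate_A_from_induced} with the isomorphism \(\Cont(G/H)\otimes A \cong \Ind_H^G\Res_G^H A\) and the fact that induction sends the \(H\)\nb-equivariant bootstrap class into the \(G\)\nb-equivariant one. The only cosmetic difference is that the paper justifies the last fact in one line (induction preserves actions on Type~I \(\Cst\)\nb-algebras), whereas you route it through the elementary generators and induction in stages, which is equally valid.
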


\begin{proof}
  Both restriction and induction functors map actions on Type~I
  \(\Cst\)\nb-algebras again to actions on Type~I
  \(\Cst\)\nb-algebras.  Therefore, they map the equivariant
  bootstrap classes to each other.  If \(\Res_G^H A\) is in the
  \(H\)\nb-equivariant bootstrap class, so is
  \(\Cont(G/H, A) \cong \Ind_H^G \Res_G^H A\).  Now
  Theorem~\ref{the:generate_A_from_induced} implies the result.
\end{proof}

\begin{corollary}
  \label{cor:generate_bootstrap}
  The objects \(\Cont(G/H)\) for cyclic subgroups \(H\subseteq G\)
  generate the equivariant bootstrap class in \(\KK^G\).
\end{corollary}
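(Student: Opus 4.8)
The plan is to establish the two inclusions between the equivariant bootstrap class and the localising subcategory \(\langle\Cont(G/H)\mid H\subseteq G\text{ cyclic}\rangle\). One is immediate: each \(\Cont(G/H)\) is commutative, hence a \(G\)\nb-action on a Type~I \(\Cst\)\nb-algebra, so it lies in the bootstrap class; since the bootstrap class is localising, it contains the subcategory these objects generate.

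For the converse, take \(A\) in the equivariant bootstrap class. First I would apply Theorem~\ref{the:generate_A_from_induced} to place \(A\) in the localising subcategory generated by the objects \(\Cont(G/H)\otimes A\) for cyclic \(H\subseteq G\), so it suffices to show that each such object lies in \(\langle\Cont(G/L)\mid L\text{ cyclic}\rangle\). Using the isomorphism \(\Cont(G/H)\otimes A\cong\Ind_H^G\Res_G^H A\) already invoked above, together with the fact that restriction sends actions on Type~I \(\Cst\)\nb-algebras to actions on Type~I \(\Cst\)\nb-algebras, I get that \(\Res_G^H A\) lies in the \(H\)\nb-equivariant bootstrap class. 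By \cite{dellAmbrogio-Emerson-Meyer:Equivariant_Lefschetz}*{Theorem~3.10} applied to the finite group~\(H\), that class is generated by the elementary objects \(\Ind_L^H\mathbb{M}_n(\C)\) for subgroups \(L\subseteq H\), and every such~\(L\) is cyclic because~\(H\) is. As \(\Ind_H^G\) is triangulated and preserves countable coproducts, it carries this generating set to the objects \(\Ind_H^G\Ind_L^H\mathbb{M}_n(\C)\cong\Ind_L^G\mathbb{M}_n(\C)\) with \(L\) cyclic, so \(\Cont(G/H)\otimes A\) lies in the localising subcategory generated by the elementary objects induced from cyclic subgroups.

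It then remains to identify those elementary objects, and this is the one point that uses more than formal manipulation. For a cyclic group~\(L\) of order~\(m\) with generator~\(\sigma\) and any action \(\alpha\colon L\to\mathrm{Aut}(\mathbb{M}_n(\C))=PU(n)\), choose a lift \(\tilde u\in U(n)\) of \(\alpha(\sigma)\); then \(\tilde u^{\,m}=\lambda\cdot 1\) for some \(\lambda\in U(1)\), and rescaling~\(\tilde u\) by an \(m\)\nb-th root of \(\lambda^{-1}\) yields a unitary~\(u\) with the same image in \(PU(n)\) and \(u^m=1\), hence a genuine unitary representation \(\rho\colon L\to U(n)\) with \(\mathrm{Ad}\,\rho=\alpha\). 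Then \(\C^n\) with the \(\rho\)\nb-action is an \(L\)\nb-equivariant imprimitivity bimodule between \(\mathbb{M}_n(\C)=\Comp(\C^n)\) with the action~\(\alpha\) and~\(\C\) with the trivial action, so \(\Ind_L^G(\mathbb{M}_n(\C),\alpha)\cong\Ind_L^G\C=\Cont(G/L)\) in \(\KK^G\). Combining, \(\Cont(G/H)\otimes A\) lies in \(\langle\Cont(G/L)\mid L\text{ cyclic}\rangle\) for every cyclic~\(H\), and hence so does~\(A\). I expect this last step to be the main obstacle only in that it genuinely requires cyclicity: for a non-cyclic group an action on \(\mathbb{M}_n(\C)\) need not come from a linear representation, which is precisely why the passage to cyclic subgroups via the Arano--Kubota Theorem~\ref{the:Arano-Kubota} is what lets one reduce the redundant generators all the way down to the commutative algebras \(\Cont(G/H)\).
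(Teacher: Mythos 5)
Your proposal is correct and takes essentially the same route as the paper: the heart in both cases is Theorem~\ref{the:generate_A_from_induced} combined with the fact that an action of a cyclic group on a matrix algebra lifts to a genuine unitary representation (your explicit rescaling of a lifted generator is the concrete form of the paper's ``any $2$\nb-cocycle is trivial''), so that induced elementary algebras over cyclic subgroups become $\KK^G$-equivalent to $\Cont(G/L)$. The only structural difference is that the paper applies Theorem~\ref{the:generate_A_from_induced} to the elementary generators of the bootstrap class and uses that restriction and induction preserve elementary algebras, whereas you apply it to an arbitrary bootstrap object and then invoke \cite{dellAmbrogio-Emerson-Meyer:Equivariant_Lefschetz}*{Theorem~3.10} a second time for the cyclic subgroup $H$; both reductions are valid and rest on the same ingredients.
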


\begin{proof}
  It suffices to prove that the localising subcategory generated by
  \(\Cont(G/H)\) for cyclic subgroups \(H\subseteq G\) contains all
  the generators of the equivariant bootstrap class.  We therefore
  pick one or, a bit more generally, a direct sum of these
  generators.  So let~\(A\) be a \(G\)\nb-action on an elementary
  \(\Cst\)\nb-algebra.  By
  Theorem~\ref{the:generate_A_from_induced}, \(A\) belongs to the
  localising subcategory generated by
  \(\Cont(G/H,A) \cong \Ind_H^G \Res_G^H A\) for cyclic subgroups
  \(H\subseteq G\).  Thus \(G\)\nb-\(\Cst\)-algebras of the form
  \(\Ind_H^G B\) for cyclic subgroups \(H\subseteq G\) and an action
  of~\(H\) on an elementary \(\Cst\)\nb-algebra~\(B\) also generate
  the equivariant bootstrap class; they cannot generate a larger
  subcategory because \(\Ind_H^G B\) is an elementary
  \(\Cst\)\nb-algebra if~\(B\) is.  Now for a cyclic group~\(H\),
  any \(2\)\nb-cocycle is trivial, so any elementary
  \(H\)\nb-\(\Cst\)-algebra is Morita equivalent to \(\Cont(H/K)\)
  for a subgroup \(K\subseteq H\), which is again cyclic.
  Thus~\(A\) belongs to the localising subcategory generated by
  \(\Ind_H^G \Cont(H/K) \cong \Cont(G/K)\) for cyclic subgroups
  \(K\subseteq G\).
\end{proof}

The next corollary removes the finite generation assumption from
\cite{Arano-Kubota:Atiyah-Segal}*{Corollary~3.23.(1)}.

\begin{corollary}
  Let \(A\) and~\(B\) be objects of \(\KK^G\).  If
  \(\KK^H_*(A,B)=0\) for all cyclic subgroups \(H\subseteq G\), then
  \(\KK^G_*(A,B)=0\).
\end{corollary}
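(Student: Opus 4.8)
The plan is to deduce this from Corollary~\ref{cor:generate_bootstrap} together with Theorem~\ref{thm:trspectral} — or, more directly, from the structure of the proof of Theorem~\ref{the:generate_A_from_induced}. The key observation is that the hypothesis ``\(\KK^H_*(A,B)=0\) for all cyclic \(H\)'' should be reformulated in terms of the induced objects rather than the restrictions. Concretely, by the adjunction between \(\Res^G_H\) and \(\Ind_H^G\) (valid for finite~\(G\)), we have \(\KK^G_*(\Ind_H^G \Res_G^H A, B) \cong \KK^H_*(\Res_G^H A, \Res_G^H B) = \KK^H_*(A,B)\) (reading the subscripts appropriately; the point is that this group vanishes by hypothesis). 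More generally, the same adjunction gives \(\KK^G_*(\Cont(G/H)\otimes A, B) \cong \KK^H_*(\Res_G^H A, \Res_G^H B)\), so the hypothesis says precisely that \(B\) is right-orthogonal to every generator \(\Cont(G/H)\otimes A\) of the localising subcategory \(\langle \Cont(G/H)\otimes A : H \text{ cyclic}\rangle\).

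First I would record that the class of objects \(X \in \KK^G\) with \(\KK^G_*(X,B)=0\) is a localising subcategory: it is closed under suspensions (trivially), under mapping cones by the long exact sequence of a triangle, and under countable coproducts because \(\KK^G_*({-},B)\) sends coproducts to products. Next I would invoke Theorem~\ref{the:generate_A_from_induced}, which states that \(A\) lies in the localising subcategory generated by the objects \(\Cont(G/H)\otimes A\) for cyclic subgroups \(H\subseteq G\). Then, since each generator \(\Cont(G/H)\otimes A \cong \Ind_H^G \Res_G^H A\) satisfies \(\KK^G_*(\Cont(G/H)\otimes A, B) \cong \KK^H_*(A,B) = 0\) by the adjunction and the hypothesis, the localising subcategory these objects generate is contained in the localising subcategory of objects killed by \(\KK^G_*({-},B)\). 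Hence \(A\) itself is killed, that is, \(\KK^G_*(A,B)=0\), which is the claim.

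The main point requiring care — though it is not really an obstacle, since all the work was done in Theorem~\ref{the:generate_A_from_induced} — is the identification \(\KK^G_*(\Cont(G/H)\otimes A, B) \cong \KK^H_*(\Res_G^H A, \Res_G^H B)\). This uses that \(\Cont(G/H)\otimes A \cong \Ind_H^G(\C)\otimes A\), that induction satisfies the projection formula \(\Ind_H^G(\C)\otimes A \cong \Ind_H^G(\Res_G^H A)\) (as recorded implicitly in the computation \(\Cont(G/H,A)\cong \Ind_H^G\Res_G^H A\) used in Corollary~\ref{cor:generate_bootstrap}), and that \(\Ind_H^G\) is left adjoint to \(\Res_G^H\) for finite~\(G\), as stated in Section~\ref{sec:functors_KKG}. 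The \(\Z/2\)-graded statement follows from the ungraded one by applying it with \(A\) replaced by \(\Sigma A\) and using Bott periodicity. So the proof is just an assembly of results already in hand.
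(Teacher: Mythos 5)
Your proposal is correct and follows exactly the paper's own argument: use the Ind--Res adjunction (plus $\Cont(G/H)\otimes A\cong \Ind_H^G\Res_G^H A$) to rewrite the hypothesis as $\KK^G_*(\Cont(G/H)\otimes A,B)=0$, note that the objects $D$ with $\KK^G_*(D,B)=0$ form a localising subcategory, and conclude from Theorem~\ref{the:generate_A_from_induced}. Your extra care about the projection formula and the graded statement only makes explicit what the paper leaves implicit.
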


\begin{proof}
  Since induction is left adjoint to restriction, the assumption is
  equivalent to \(\KK^G_*(\Cont(G/H) \otimes A,B)=0\) for all cyclic
  subgroups \(H\subseteq G\).  The class of objects~\(D\) with
  \(\KK^G_*(D,B)=0\) is localising.  So the claim follows from
  Theorem~\ref{the:generate_A_from_induced}.
\end{proof}

The following corollary relates certain conditions that are clearly
necessary for \(A\in\KK^G\) to belong to the equivariant bootstrap
class.  We do not know whether they are also sufficient.

\begin{corollary}
  Let~\(A\) be an object of \(\KK^G\).  If \(A\rtimes H\) belongs to
  the bootstrap class in \(\KK\) for all cyclic subgroups
  \(H\subseteq G\), then \(A\rtimes K\) is in the bootstrap class in
  \(\KK\) for all subgroups, cyclic or not.  In addition,
  \((A\otimes B)\rtimes G\) is in the bootstrap class in \(\KK\)
  if~\(B\) is in the bootstrap class in \(\KK^G\).
\end{corollary}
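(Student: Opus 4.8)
The plan is to transport the generation statement of Theorem~\ref{the:generate_A_from_induced} along the crossed-product functor. Besides that theorem, two standard ingredients enter. First, for a finite group~\(K\) the full crossed product \({-}\rtimes K\colon \KK^K\to\KK\) is triangulated and commutes with countable direct sums (it is a descent functor, and for finite groups the full and reduced crossed products agree); likewise \(A\otimes{-}\colon\KK^G\to\KK^G\) is triangulated and coproduct-preserving. Consequently, each of these functors carries a localising subcategory into a localising subcategory, in the sense that it maps \(\langle\mathcal{S}\rangle\) into the localising subcategory generated by the image of~\(\mathcal{S}\). Second, the projection formula \(D\otimes\Cont(K/H)\cong\Ind_H^K\Res_K^H D\)---already used in the proof of Corollary~\ref{cor:generate_bootstrap}---together with Green's imprimitivity theorem, which provides a Morita equivalence \((\Ind_H^K D)\rtimes K\sim D\rtimes H\) and in particular a \(\KK\)-equivalence. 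Recall finally that the bootstrap class in~\(\KK\) is localising and closed under \(\KK\)-equivalence.

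For the first assertion, I would fix a subgroup \(K\subseteq G\) and note that \(A\rtimes K\) depends only on \(\Res_G^K A\in\KK^K\). Applying Theorem~\ref{the:generate_A_from_induced} to the finite group~\(K\) and the object \(\Res_G^K A\) shows that \(\Res_G^K A\) lies in the localising subcategory of \(\KK^K\) generated by \(\Cont(K/H)\otimes\Res_G^K A\) for cyclic subgroups \(H\subseteq K\). Applying \({-}\rtimes K\), it follows that \(A\rtimes K\) lies in the localising subcategory of~\(\KK\) generated by the \(\Cst\)\nb-algebras \((\Cont(K/H)\otimes\Res_G^K A)\rtimes K\). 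By the projection formula \(\Cont(K/H)\otimes\Res_G^K A\cong\Ind_H^K\Res_G^H A\), and by Green imprimitivity the latter crossed product is Morita equivalent to \((\Res_G^H A)\rtimes H=A\rtimes H\). Each such~\(H\) is cyclic in~\(K\) and hence in~\(G\), so \(A\rtimes H\) lies in the bootstrap class by hypothesis; since the bootstrap class is localising and invariant under \(\KK\)-equivalence, so is \(A\rtimes K\).

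For the second assertion, Corollary~\ref{cor:generate_bootstrap} puts~\(B\) in the localising subcategory of \(\KK^G\) generated by \(\Cont(G/H)\) for cyclic subgroups \(H\subseteq G\). The functor \(D\mapsto(A\otimes D)\rtimes G\) from \(\KK^G\) to~\(\KK\) is a composite of triangulated, coproduct-preserving functors, so \((A\otimes B)\rtimes G\) lies in the localising subcategory of~\(\KK\) generated by the algebras \((A\otimes\Cont(G/H))\rtimes G\). By the projection formula \(A\otimes\Cont(G/H)\cong\Ind_H^G\Res_G^H A\), and by Green imprimitivity \((\Ind_H^G\Res_G^H A)\rtimes G\) is Morita equivalent to \(A\rtimes H\), which lies in the bootstrap class by hypothesis. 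Hence \((A\otimes B)\rtimes G\) lies in the bootstrap class.

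I do not expect a genuine obstacle here; the machinery from the earlier results does all the work. The only point that deserves a moment's care is to apply Theorem~\ref{the:generate_A_from_induced} to the subgroup~\(K\) itself, so that the resulting generators \(\Cont(K/H)\) already involve cyclic subgroups of~\(K\)---automatically cyclic in~\(G\)---rather than having to decompose \(\Res_G^K\Cont(G/H)\) into \(K\)\nb-orbits and track the stabilisers by hand.
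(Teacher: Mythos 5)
Your proof is correct, and for the second assertion it is the same argument as the paper's: write \(B\) via Corollary~\ref{cor:generate_bootstrap} as an object of the localising subcategory generated by the \(\Cont(G/H)\) with \(H\) cyclic, push it through the triangulated, coproduct-preserving functor \(D\mapsto (A\otimes D)\rtimes G\), and identify \((A\otimes \Cont(G/H))\rtimes G\) with \(A\rtimes H\) up to Morita--Rieffel equivalence. Where you differ is the first assertion: you restrict to the subgroup \(K\), apply Theorem~\ref{the:generate_A_from_induced} to the finite group \(K\) and the object \(\Res_G^K A\), and then use descent and Green imprimitivity inside \(\KK^K\); the paper instead gets the statement about \(A\rtimes K\) for free as the special case \(B=\Cont(G/K)\) of the second assertion, using that \(\Cont(G/K)\) lies in the \(G\)\nb-equivariant bootstrap class for every subgroup \(K\) and that \((A\otimes\Cont(G/K))\rtimes G\) is Morita--Rieffel equivalent to \(A\rtimes K\). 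Both routes are valid and rest on the same two ingredients (the generation theorem plus imprimitivity); the paper's is slightly more economical, since it proves one statement and specialises, while yours is self-contained over \(K\) and, as you note, avoids decomposing \(\Res_G^K\Cont(G/H)\) into \(K\)\nb-orbits --- a step the paper sidesteps in the same way by never restricting at all.
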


\begin{proof}
  Since \(\bigl(A\otimes \Cont(G/H)\bigr)\rtimes G\) is
  Morita--Rieffel equivalent to \(A\rtimes H\), the assumption means
  that \(\bigl(A\otimes \Cont(G/H)\bigr)\rtimes G\) belongs to the
  bootstrap class.  Since tensoring with~\(A\) and the crossed
  product with~\(G\) are triangulated functors that commute with
  countable direct sums, this implies that
  \(\bigl(A\otimes B\bigr)\rtimes G\) belongs to the bootstrap class
  for all~\(B\) in the localising subcategory generated by
  \(\Cont(G/H)\) for the cyclic subgroups \(H\subseteq G\).  This is
  the equivariant bootstrap class by
  Corollary~\ref{cor:generate_bootstrap}.  Since it contains
  \(\Cont(G/K)\) for any subgroup \(K\subseteq G\), we also get the
  claim about \(A\rtimes K\), which is Morita--Rieffel equivalent to
  \(\bigl(A\otimes \Cont(G/K)\bigr)\rtimes G\).
\end{proof}

Let~$B$ be a separable $G$\nb-$\Cst$\nb-algebra in the equivariant
bootstrap class.  Using the Ind-Res adjunction, for all \(H\subseteq G\),
\[
  \KK^G_*(\Cont(G/H),B)
  \cong \KK^G_*(\Ind_H^G \C,B)
  \cong \KK^H_*(\C,\Res^G_H B)
  \cong \K^H_*(B)
\]
is a countable \(\Z/2\)-graded module over the representation ring
of~\(H\).  The representation rings of all subgroups of~\(G\) form a
Green functor~$R^G$ (see~\cite{dellAmbrogio:Cell_G}).  The set of cyclic
subgroups of $G$ is closed under taking subgroups and conjugation. This allows
to consider the representation Green functor only on cyclic subgroups of~$G$. We denote it by~$R^G_{\textup{cy}}$.  Then the countable \(\Z/2\)-graded \(R(H)\)-modules \(\K^H_*(B)\) for the cyclic
\(H\subseteq G\) form a Mackey functor on cyclic subgroups of~\(G\) over~$R^G_{\textup{cy}}$.

\begin{proposition}[\cite{dellAmbrogio:Cell_G}]
  \label{pro:Mackey_universal}
  The representable functor
  \[
    \mathrm{ck}_*^G\colon \KK^G
    \to \mathfrak{Mod}^{\Z/2}(R^G_{\textup{cy}})_{\aleph_1},\qquad
    B\mapsto \bigl\{\K_*^H
    \bigl(B)\bigr\}
    ^{\textup{cyclic }H\subseteq G},
  \]
  into the abelian category of $\Z/2$\nb-graded countable right
  Mackey modules on cyclic subgroups of~\(G\) over~$R^G_{\textup{cy}}$ is the universal stable $\ker \mathrm{ck}_*^G$\nb-exact functor.
\end{proposition}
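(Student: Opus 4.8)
The plan is to apply the general machinery of Lemma~\ref{lem:universal_functor} to the triangulated category $\mathfrak{T} = \KK^G$ with $\mathfrak{C}$ the set of objects $\Cont(G/H)$ for cyclic subgroups $H\subseteq G$, and then to identify the resulting abstract module category $\mathfrak{Mod}(\mathfrak{C}^{\mathrm{op}})_{\aleph_1}$ with the category of $\Z/2$\nb-graded countable right Mackey modules on cyclic subgroups over~$R^G_{\textup{cy}}$.

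First I would check the hypotheses of Lemma~\ref{lem:universal_functor} for this $\mathfrak{C}$. Since $G$ is finite, $\mathfrak{C}$ is a finite, hence at most countable, set. The adjunction isomorphism $\KK^G_*(\Cont(G/H),B)\cong \KK^H_*(\C,B)\cong \K_*(B\rtimes H)$ recalled above shows that each functor $\KK^G(\Cont(G/H),{-})$ commutes with countable direct sums, so $\mathfrak{C}$ consists of $\aleph_1$\nb-compact objects; moreover $\K_*(B\rtimes H)$ is countable for separable~$B$, so $\mathfrak{T}_n(C,A)$ is countable for all $C\in\mathfrak{C}$, $A\in\KK^G$, $n\in\Z$. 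Because the suspension on $\KK^G$ is idempotent, the $\mathbb Z$\nb-graded $\mathfrak{C}$\nb-modules become $\Z/2$\nb-graded. Hence Lemma~\ref{lem:universal_functor} applies and shows that the universal $\ideal_{\mathfrak{C}}$\nb-exact stable homological functor is $U_{\mathfrak{C}}\colon \KK^G\to \mathfrak{Mod}(\mathfrak{C}^{\mathrm{op}})_{\aleph_1}$, where $\ideal_{\mathfrak{C}}$ is the kernel on morphisms of the functor $B\mapsto \bigl(\KK^G_*(\Cont(G/H),B)\bigr)_{H\text{ cyclic}}$.

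Next I would identify the two abelian categories. By Theorem~\ref{thm:ivo}, the functor $\mathrm{k}^G$ restricts to a fully faithful embedding of the full subcategory of $\KK^G$ on the objects $\Cont(G/L)$, $L\subseteq G$, into $R^G\textup{-Mac}$; in particular it identifies the $\Z/2$\nb-graded pre\nb-additive category $\mathfrak{C}$, whose arrow groups are $\bigoplus_n \KK^G_n(\Cont(G/H),\Cont(G/L))$ for cyclic $H,L$, with the corresponding full subcategory of $R^G\textup{-Mac}$ supported on the cyclic orbits. Since the set of cyclic subgroups of~$G$ is closed under passing to subgroups and under conjugation, this full subcategory is exactly the one whose category of contravariant stable additive $\Z/2$\nb-graded functors is Dell'Ambrogio's category $\mathfrak{Mod}^{\Z/2}(R^G_{\textup{cy}})$ of $\Z/2$\nb-graded right Mackey modules on cyclic subgroups over~$R^G_{\textup{cy}}$ (see~\cite{dellAmbrogio:Cell_G}); passing to countable objects matches the $\aleph_1$\nb-subscripts on both sides. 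Under this equivalence $\mathfrak{Mod}(\mathfrak{C}^{\mathrm{op}})_{\aleph_1}\simeq \mathfrak{Mod}^{\Z/2}(R^G_{\textup{cy}})_{\aleph_1}$, the functor $U_{\mathfrak{C}}$ becomes precisely $\mathrm{ck}_*^G$, $B\mapsto \{\K^H_*(B)\}^{\text{cyclic }H\subseteq G}$.

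Finally, the underlying family of $\Z/2$\nb-graded abelian groups of $U_{\mathfrak{C}}(B)$ is $\bigl(\KK^G_*(\Cont(G/H),B)\bigr)_{H\text{ cyclic}}$, and a morphism in $\KK^G$ vanishes under $U_{\mathfrak{C}}$ if and only if it vanishes on this underlying family; hence $\ker\mathrm{ck}_*^G = \ideal_{\mathfrak{C}}$, and so $\mathrm{ck}_*^G$ is the universal stable $\ker\mathrm{ck}_*^G$\nb-exact functor. The only step that requires genuine work is the identification of $\mathfrak{Mod}(\mathfrak{C}^{\mathrm{op}})$ with the Mackey\nb-module category, that is, the verification that the composition and the Mackey (restriction, induction, conjugation) structure on the groups $\KK^G(\Cont(G/H),\Cont(G/L))$ for cyclic $H,L$ agree with the structure built into $R^G\textup{-Mac}$; this is carried out in~\cite{dellAmbrogio:Cell_G}, so here it may simply be cited.
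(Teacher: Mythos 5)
Your proposal is correct and follows essentially the same route as the paper: apply the general machinery of Lemma~\ref{lem:universal_functor} to the generators \(\Cont(G/H)\) for cyclic \(H\subseteq G\) (checking \(\aleph_1\)\nb-compactness and countability via the induction--restriction adjunction), and then identify \(\mathfrak{Mod}(\mathfrak{C}^{\mathrm{op}})_{\aleph_1}\) with \(\Z/2\)\nb-graded countable Mackey modules over \(R^G_{\textup{cy}}\) using Theorem~\ref{thm:ivo} and the fact that cyclic subgroups are closed under subgroups and conjugation. The paper likewise delegates the only substantive step --- matching the composition and Mackey structure on \(\KK^G(\Cont(G/H),\Cont(G/L))\) with \(R^G\)\nb-Mackey modules --- to Dell'Ambrogio's work, exactly as you do.
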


\begin{corollary}
  \label{cor:KKspectral}
  Let~$G$ be a finite group.  For every $A, B \in \KK^G$ with~$A$ in
  the \(G\)\nb-equivariant boostrap class, there is a
  cohomologically indexed, right half plane, conditionally
  convergent spectral sequence
  \[
    E^{p,q}_2=\Ext^p_{R^G_{\textup{cy}}}
    \bigl(\mathrm{ck}_*^G(A),\mathrm{ck}_*^G(B)\bigr)_{-q}
    \xRightarrow{n=p+q}\KK^G_n(A,B)
  \]
  that depends functorially on $A$ and~$B$.
\end{corollary}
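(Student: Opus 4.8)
The plan is to deduce this as an instance of the abstract spectral sequence of Theorem~\ref{thm:trspectral}. Take $\mathfrak{T}=\KK^G$ and let~$\mathfrak{C}$ be the finite set of objects $\Cont(G/H)$ for cyclic subgroups $H\subseteq G$. By the Ind-Res adjunction there are natural isomorphisms $\KK^G_*(\Cont(G/H),B)\cong\K^H_*(B)\cong\K_*(B\rtimes H)$; these groups are countable for separable~$B$ and commute with countable direct sums, so every $\Cont(G/H)\in\mathfrak{C}$ is $\aleph_1$\nb-compact and the standing hypotheses of Lemma~\ref{lem:universal_functor} and Theorem~\ref{thm:trspectral} are satisfied. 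By Corollary~\ref{cor:generate_bootstrap}, which in turn rests on the Arano--Kubota Theorem~\ref{the:Arano-Kubota}, the localising subcategory $\langle\mathfrak{C}\rangle$ is precisely the $G$\nb-equivariant bootstrap class; hence the hypothesis on~$A$ says exactly that $A\in\langle\mathfrak{C}\rangle$. Theorem~\ref{thm:trspectral} then furnishes a natural spectral sequence with the asserted formal properties (cohomologically indexed, right half-plane, conditionally convergent) and with $E^{p,q}_2=\Ext^p_{\mathfrak{Mod}(\mathfrak{C}^{\mathrm{op}})}\bigl(U_{\mathfrak{C}}(A),U_{\mathfrak{C}}(B)\bigr)_{-q}$ converging to $\KK^G_{p+q}(A,B)$.

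It then remains to identify this $E_2$\nb-page with the one in the statement. The kernel on morphisms $\ideal_{\mathfrak{C}}$ of $F_{\mathfrak{C}}$ equals $\ker\mathrm{ck}_*^G$, since $F_{\mathfrak{C}}$ and $\mathrm{ck}_*^G$ differ only by the natural isomorphisms $\KK^G_*(\Cont(G/H),B)\cong\K^H_*(B)$ recorded above. By Lemma~\ref{lem:universal_functor}, $U_{\mathfrak{C}}$ is the universal $\ideal_{\mathfrak{C}}$\nb-exact stable homological functor, while Proposition~\ref{pro:Mackey_universal} identifies $\mathrm{ck}_*^G$ as the universal $\ker\mathrm{ck}_*^G$\nb-exact functor. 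Uniqueness of the universal functor up to equivalence therefore gives an equivalence of stable abelian categories $\mathfrak{Mod}(\mathfrak{C}^{\mathrm{op}})_{\aleph_1}\simeq\mathfrak{Mod}^{\Z/2}(R^G_{\textup{cy}})_{\aleph_1}$ carrying $U_{\mathfrak{C}}$ to $\mathrm{ck}_*^G$; concretely, this is the identification of the arrows between the generators $\Cont(G/H)$ with Mackey-module morphisms provided by Dell'Ambrogio's Theorem~\ref{thm:ivo}. Since the suspension on $\KK^G$ is idempotent, the $\Z$\nb-graded modules occurring here are $\Z/2$\nb-graded. As an equivalence of abelian categories preserves $\Ext$\nb-groups, the $E^{p,q}_2$ term becomes $\Ext^p_{R^G_{\textup{cy}}}\bigl(\mathrm{ck}_*^G(A),\mathrm{ck}_*^G(B)\bigr)_{-q}$, and the functorial dependence on~$A$ and~$B$ is inherited from the naturality already built into Theorem~\ref{thm:trspectral}.

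The step I expect to require the most care is the bookkeeping between the full module categories and their countable subcategories: Theorem~\ref{thm:trspectral} computes $\Ext$ in $\mathfrak{Mod}(\mathfrak{C}^{\mathrm{op}})$, whereas Proposition~\ref{pro:Mackey_universal} is stated for the countable subcategory $\mathfrak{Mod}^{\Z/2}(R^G_{\textup{cy}})_{\aleph_1}$. Since $\mathfrak{C}$ is finite and the modules $\mathrm{ck}_*^G(A)$ and $\mathrm{ck}_*^G(B)$ are countable, a projective resolution of $\mathrm{ck}_*^G(A)$ can be chosen inside the countable subcategory — indeed built from free modules of at most countable rank — so the resulting $\Ext$\nb-groups are unchanged upon passing to the ambient module category; I would either verify this directly or cite the corresponding reconciliation from \cite{dellAmbrogio:Cell_G}, where the analogous spectral sequence over the family of all subgroups is obtained.
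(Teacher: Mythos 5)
Your proposal is correct and follows exactly the route the paper takes: its proof of this corollary is precisely the combination of Corollary~\ref{cor:generate_bootstrap} (identifying the bootstrap class with the localising subcategory generated by the $\Cont(G/H)$ for cyclic~$H$), Theorem~\ref{thm:trspectral}, and Proposition~\ref{pro:Mackey_universal}, with the identification of the $E_2$\nb-page via the universal property, as you spell out. Your extra care about $\aleph_1$\nb-compactness and the countable versus full module categories only fills in details the paper leaves implicit.
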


\begin{proof}
  This follows from Corollary~\ref{cor:generate_bootstrap},
  Theorem~\ref{thm:trspectral}, and
  Proposition~\ref{pro:Mackey_universal}.
\end{proof}

\section{Localisation}

\subsection{Localisation at a set of primes}
\label{sec:localisation_primes}

We recall how to localise~\(\KK^G\) at a set of primes~\(S\); this
works also if~\(G\) is an arbitrary locally compact group, or for
other types of equivariant \(\KK\)-theory.  Let
\(\Z[S^{-1}]\defeq \Z[1/p, p\in S]\).  There are two useful ways to
localise the category \(\KK^G\) by \(\Z[S^{-1}]\).  We may either
take the arrows between \(A\) and~\(B\) to be
\(\KK^G(A,B) \otimes_\Z \Z[S^{-1}]\) as
in~\cite{Inassaridze-Kandelaki-Meyer:Finite_Torsion_KK} or
\(\KK^G(A, B \otimes \Mat_{S^\infty})\) as in
\cite{Blackadar:K-theory}*{Exercise~23.15.6};
here~\(\Mat_{S^\infty}\) denotes the UHF algebra of type
\(\prod_{p\in S} p^\infty\) with the trivial action of~\(G\).  The
first localisation yields again a triangulated category, but the
canonical functor from \(\KK^G\) to it does not preserve coproducts.
Therefore, our machinery of relative homological algebra applies
only partially.  This is why we prefer the second approach to
localisation here.

\begin{definition}
  \label{def:S-divisible}
  A separable \(G\)-\(\Cst\)-algebra~\(A\) is
  \emph{\(S\)\nb-divisible} if
  \(p\cdot \mathrm{id}_A \in \KK^G(A,A)\) is invertible for all
  \(p\in S\).
\end{definition}

\begin{remark}
  If~\(A\) is \(S\)\nb-divisible, then for each \(p\in S\) there is
  \(h\in \KK^G(A,A)\) with \(p\cdot h = \mathrm{id}_A\).  The
  converse is also true because the Kasparov product with
  \(p\cdot \mathrm{id}_A\in\KK^G(A,A)\) on either side simply
  multiplies by~\(p\).
\end{remark}

\begin{proposition}
  \label{pro:S-divisible}
  A separable \(G\)-\(\Cst\)-algebra~\(A\) is \(S\)\nb-divisible if
  and only if the canonical map \(A\to A \otimes \Mat_{S^\infty}\)
  is a \(\KK^G\)-equivalence, if and only if~\(A\) is isomorphic to
  \(B \otimes \Mat_{S^\infty}\) for some separable
  \(G\)-\(\Cst\)-algebra~\(B\).  If~\(B\) is \(S\)\nb-divisible,
  then
  \begin{equation}
    \label{eq:Mat_S_isomorphism}
    \KK^G(A,B) \cong \KK^G(A \otimes \Mat_{S^\infty},B).
  \end{equation}
\end{proposition}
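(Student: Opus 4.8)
The plan is to relate the three conditions through the properties of the canonical map \(\iota_A\colon A\to A\otimes\Mat_{S^\infty}\), \(a\mapsto a\otimes 1\), and to obtain the displayed isomorphism from a vanishing statement about its mapping cone. The one external ingredient I will use is the elementary fact that \(\Mat_{S^\infty}\) is itself \(S\)\nb-divisible; this follows from \(\K_0(\Mat_{S^\infty})=\Z[S^{-1}]\) together with the UCT, or from \(\Mat_{S^\infty}\) being strongly self-absorbing. Since \(p\cdot\mathrm{id}_{A\otimes\Mat_{S^\infty}}=\mathrm{id}_A\otimes(p\cdot\mathrm{id}_{\Mat_{S^\infty}})\) and \(\mathrm{id}_A\otimes({-})\) preserves invertibility, it follows that \(A\otimes\Mat_{S^\infty}\) is \(S\)\nb-divisible for \emph{every} \(A\in\KK^G\).

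The technical heart is the claim that the mapping cone~\(C\) of \(\iota_A\) satisfies \(\KK^G_*(C,D)=0\) for every \(S\)\nb-divisible~\(D\). To prove this, choose natural numbers \(1=n_0\mid n_1\mid n_2\mid\dotsb\), each a product of primes in~\(S\), with \(\Mat_{S^\infty}\cong\varinjlim(\Mat_{n_k},\,x\mapsto x\otimes 1)\), so that \(A\otimes\Mat_{S^\infty}\cong\varinjlim\Mat_{n_k}(A)\); this \(\Cst\)\nb-inductive limit, having injective connecting maps, is a homotopy colimit in~\(\KK^G\), so \(C\cong\mathrm{hocolim}_k\,\mathrm{cofiber}(A\hookrightarrow\Mat_{n_k}(A))\). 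Under the matrix stability isomorphism \(\Mat_{n_k}(A)\cong A\) the corner inclusion becomes \(\mathrm{id}_A\), hence the unital inclusion \(a\mapsto a\otimes 1_{n_k}\), being a block sum of \(n_k\) copies of the corner map, becomes \(n_k\cdot\mathrm{id}_A\); thus \(\mathrm{cofiber}(A\hookrightarrow\Mat_{n_k}(A))\cong\mathrm{cofiber}(n_k\cdot\mathrm{id}_A)\). For \(S\)\nb-divisible~\(D\) the element \(n_k\cdot\mathrm{id}_D\) is invertible, so multiplication by~\(n_k\) on \(\KK^G_*(A,D)\) (that is, composition with \(n_k\cdot\mathrm{id}_D\)) is invertible, and the long exact sequence of the triangle \(A\xrightarrow{n_k}A\to\mathrm{cofiber}(n_k\cdot\mathrm{id}_A)\) forces \(\KK^G_*(\mathrm{cofiber}(n_k\cdot\mathrm{id}_A),D)=0\) for all~\(k\). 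The Milnor \(\varprojlim^1\)\nb-sequence for the homotopy colimit then yields \(\KK^G_*(C,D)=0\).

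Granting this claim, take \(D=B\): the triangle \(A\xrightarrow{\iota_A}A\otimes\Mat_{S^\infty}\to C\) shows that precomposition with~\(\iota_A\) is an isomorphism \(\KK^G(A\otimes\Mat_{S^\infty},B)\xrightarrow{\sim}\KK^G(A,B)\) whenever~\(B\) is \(S\)\nb-divisible, which is the displayed formula. For the equivalences: if~\(A\) is \(S\)\nb-divisible, then~\(A\) and \(A\otimes\Mat_{S^\infty}\) are both \(S\)\nb-divisible, so applying the claim with \(D=A\) and with \(D=A\otimes\Mat_{S^\infty}\) shows that \(\iota_A^*\) is an isomorphism \(\KK^G(A\otimes\Mat_{S^\infty},A)\xrightarrow{\sim}\KK^G(A,A)\) and \(\KK^G(A\otimes\Mat_{S^\infty},A\otimes\Mat_{S^\infty})\xrightarrow{\sim}\KK^G(A,A\otimes\Mat_{S^\infty})\); picking~\(s\) with \(s\circ\iota_A=\mathrm{id}_A\) and noting \(\iota_A^*(\iota_A\circ s)=\iota_A\circ s\circ\iota_A=\iota_A=\iota_A^*(\mathrm{id})\) gives \(\iota_A\circ s=\mathrm{id}\), so \(\iota_A\) is a \(\KK^G\)\nb-equivalence. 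The implication to ``\(A\cong B\otimes\Mat_{S^\infty}\)'' is trivial with \(B=A\), and ``\(A\cong B\otimes\Mat_{S^\infty}\) in \(\KK^G\) \(\Rightarrow\) \(A\) is \(S\)\nb-divisible'' holds because \(B\otimes\Mat_{S^\infty}\) is \(S\)\nb-divisible and \(S\)\nb-divisibility is invariant under \(\KK^G\)\nb-isomorphism.

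The main obstacle is the vanishing claim for the mapping cone of~\(\iota_A\): it needs the identification of \(A\otimes\Mat_{S^\infty}\) with a genuine homotopy colimit in the triangulated category~\(\KK^G\) (so that the Milnor sequence applies) and the control of the connecting maps of the inductive system up to the matrix stabilisation isomorphisms. Everything else is formal once one knows that \(\Mat_{S^\infty}\), and hence \(A\otimes\Mat_{S^\infty}\), is \(S\)\nb-divisible.
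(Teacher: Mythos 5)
Your argument is essentially the paper's: both realise \(A\otimes\Mat_{S^\infty}\) as the homotopy colimit of the matrix amplifications \(\Mat_{m_n}(A)\), observe that under the Morita identifications the connecting maps become multiplication by integers whose prime factors lie in \(S\), and then exploit \(S\)\nb-divisibility via the Milnor \(\varprojlim^1\)-sequence; the converse direction likewise rests, as in your last step, on \(\Mat_{S^\infty}\) itself being \(S\)\nb-divisible. Your packaging through the mapping cone \(C\) of \(\iota_A\) and the formal retraction trick is a harmless variant: the paper instead notes that when \(A\) is \(S\)\nb-divisible the connecting maps are \(\KK^G\)-equivalences, and it applies the \(\varprojlim^1\)-sequence directly to the tower rather than to a tower of cofibres.

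One justification should be repaired, though. The claim that the \(\Cst\)\nb-inductive limit is a homotopy colimit in \(\KK^G\) ``because the connecting maps are injective'' is not a valid reason: \(\KK^G\) is not continuous under arbitrary inductive limits, and the standard sufficient condition is admissibility of the system (an equivariant completely positive contractive splitting of the telescope extension), which has nothing to do with injectivity. In the case at hand this is available, either by invoking nuclearity of the finite-dimensional building blocks, or, as the paper does, by writing \(\Mat_{S^\infty}\) itself as a homotopy colimit in \(\KK\) and tensoring the defining exact triangle with \(A\), using that \(A\otimes{-}\) is triangulated and preserves countable coproducts. Relatedly, your identification \(C\cong \mathrm{hocolim}_k\,\mathrm{cofiber}\bigl(A\to\Mat_{n_k}(A)\bigr)\) deserves a word (homotopy colimits of exact triangles via a \(3\times 3\) or octahedron argument), although it can be bypassed entirely by applying the Milnor sequence for \(\KK^G_*({-},D)\) to the tower \(\Mat_{n_k}(A)\) itself, which is exactly the paper's route.
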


\begin{proof}
  Let~\(B\) be \(S\)\nb-divisible.  We are going to prove that the
  canonical inclusion
  \(A \hookrightarrow A \otimes \Mat_{S^\infty}\) induces an
  isomorphism as in~\eqref{eq:Mat_S_isomorphism}.  The
  \(\Cst\)\nb-algebra~\(\Mat_{S^\infty}\) is defined as the
  \(\Cst\)\nb-algebraic inductive limit of an inductive system
  formed of maps \(\Mat_{m_n}(\C) \to \Mat_{m_{n+1}}(\C)\) for
  \(m_n\in\N^\times\) with \(m_0=1\) and \(m_{n+1} = p_n\cdot m_n\),
  such that \(p_n\in S\) for all \(n\in\N\) and each element
  of~\(S\) occurs infinitely many times among the~\(p_n\).
  Since~\(\Mat_{S^\infty}\) is nuclear, the inductive system
  considered is ``admissible'' (see~\cite{Meyer-Nest:BC}).
  So~\(\Mat_{S^\infty}\) is a homotopy colimit as well, that is,
  there is an exact triangle
  \[
    \bigoplus \Mat_{m_n}(\C) \xrightarrow{\mathrm{id} - \sigma}
    \bigoplus \Mat_{m_n}(\C) \to
    \Mat_{S^\infty} \to \Sigma \bigoplus \Mat_{m_n}(\C),
  \]
  where~\(\sigma\) is the map induced by the inclusions
  \(\Mat_{m_n}(\C) \to \Mat_{m_{n+1}}(\C)\).  Since the tensor
  product with~\(A\) is a triangulated functor,
  \(A\otimes \Mat_{S^\infty}\) is the homotopy colimit of the
  induced inductive system
  \begin{equation}
    \label{eq:inductive_system_Mat_S}
    A = \Mat_{m_0}(A) \to
    \Mat_{m_1}(A) \to \dotsb \to
    \Mat_{m_n}(A) \to \Mat_{m_{n+1}}(A)
    \to \dotsb.
  \end{equation}
  When we compose the induced map
  \(\KK^G(\Mat_{m_{n+1}}(A),B)\to \KK^G(\Mat_{m_n}(A),B)\) with the
  canonical Morita equivalences between~\(A\) and \(\Mat_{p_n}(A)\),
  it becomes multiplication by~\(p_n\) on
  \(\KK^G(A,B)\).  Since~\(B\) is assumed \(S\)\nb-divisible, this
  is invertible for all~\(n\).  Therefore, the long exact sequence
  for a homotopy colimit simplifies to show that the inclusions
  \(\Mat_{m_n}(A) \hookrightarrow A \otimes \Mat_{S^\infty}\) induce
  isomorphisms on \(\KK^G({-},B)\) for all \(n\in\N\).  For \(n=0\),
  this becomes the isomorphism in~\eqref{eq:Mat_S_isomorphism}.

  Next, assume that~\(A\) is \(S\)\nb-divisible.  Then the maps
  in~\eqref{eq:inductive_system_Mat_S} are \(\KK^G\)-equivalences.
  It follows that the homotopy colimit of this inductive system is
  \(\KK^G\)-equivalent to \(\Mat_{m_n}(A)\) for all \(n\in\N\).  For
  \(n=0\), this gives the desired \(\KK^G\)-equivalence
  between~\(A\) and \(A \otimes \Mat_{S^\infty}\).  Conversely,
  since \(p\cdot \mathrm{id}_{\Mat_{S^\infty}}\) is invertible in
  \(\KK(\Mat_{S^\infty}, \Mat_{S^\infty})\) for all \(p\in S\),
  anything \(\KK^G\)-equivalent to \(A \otimes \Mat_{S^\infty}\) for
  some~\(A\) is \(S\)\nb-divisible.
\end{proof}

\begin{lemma}
  \label{lem:S-divisible_localising}
  The \(S\)\nb-divisible objects in \(\KK^G\) form a localising
  subcategory, that is, it is thick and closed under countable
  coproducts.  We denote it by~\(\KK^G_S\).
\end{lemma}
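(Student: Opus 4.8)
The plan is to check directly that the class of $S$\nb-divisible objects is closed under the two operations defining a localising subcategory: being thick (closed under direct summands and under cones of morphisms between its objects, hence under shifts) and closed under countable coproducts. The most convenient characterisation to exploit is the one from Proposition~\ref{pro:S-divisible}: an object $A\in\KK^G$ is $S$\nb-divisible if and only if the canonical map $A\to A\otimes\Mat_{S^\infty}$ is a $\KK^G$-equivalence, equivalently $A$ is $\KK^G$-isomorphic to $B\otimes\Mat_{S^\infty}$ for some separable $G$\nb-$\Cst$\nb-algebra~$B$. Since $\Sigma = \Cont_0(\mathbb R)\otimes{-}$ commutes with $-\otimes\Mat_{S^\infty}$, closure under $\Sigma$ and $\Sigma^{-1}$ is immediate from this characterisation.

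First I would handle closure under coproducts. The tensor product functor $-\otimes\Mat_{S^\infty}$ commutes with countable coproducts in $\KK^G$; hence if $A_k$ are $S$\nb-divisible with $A_k\cong B_k\otimes\Mat_{S^\infty}$, then $\bigoplus_k A_k \cong \bigl(\bigoplus_k B_k\bigr)\otimes\Mat_{S^\infty}$ is again $S$\nb-divisible. Alternatively, one argues pointwise: for $p\in S$ the map $p\cdot\mathrm{id}$ on $\bigoplus A_k$ is the coproduct of the maps $p\cdot\mathrm{id}_{A_k}$, and a coproduct of invertible morphisms in an additive category is invertible. Next, closure under direct summands: if $A$ is $S$\nb-divisible and $A\cong A'\oplus A''$, then $p\cdot\mathrm{id}_{A}$ invertible forces each of $p\cdot\mathrm{id}_{A'}$ and $p\cdot\mathrm{id}_{A''}$ to be invertible, since multiplication by~$p$ respects the block decomposition of $\KK^G(A,A)$. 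Finally, for the cone axiom: given an exact triangle $A\to B\to C\to\Sigma A$ with $A,B$ both $S$\nb-divisible, apply the functor $-\otimes\Mat_{S^\infty}$ to obtain a morphism of exact triangles from the original triangle to its $\Mat_{S^\infty}$-tensored version, in which the first two vertical maps $A\to A\otimes\Mat_{S^\infty}$ and $B\to B\otimes\Mat_{S^\infty}$ are isomorphisms; by the five lemma in a triangulated category the third vertical map $C\to C\otimes\Mat_{S^\infty}$ is also an isomorphism, so $C$ is $S$\nb-divisible.

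None of the steps presents a serious obstacle; the mild point to be careful about is that $-\otimes\Mat_{S^\infty}$ genuinely is a triangulated functor commuting with countable coproducts (so that it sends exact triangles to exact triangles and the five lemma applies), which was already used in the proof of Proposition~\ref{pro:S-divisible} and follows from the fact that the spatial tensor product with a fixed nuclear $\Cst$\nb-algebra is a triangulated, coproduct-preserving endofunctor of $\KK^G$. With that in hand, thickness and countable-coproduct-closure are formal consequences of the triangulated five lemma and of invertibility being detected blockwise, so the class $\KK^G_S$ is indeed localising.
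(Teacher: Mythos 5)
Your proof is correct, but it takes a genuinely different route from the paper's. The paper argues directly at the level of the definition: since multiplication by $p$ is a natural transformation, the mapping cones of $p\cdot\mathrm{id}_A$, $p\cdot\mathrm{id}_B$, $p\cdot\mathrm{id}_C$ for an exact triangle $A\to B\to C\to\Sigma A$ again form an exact triangle (by Beilinson--Bernstein--Deligne, Proposition~1.1.11), so if two of these cones vanish the third does, which gives the two-out-of-three property; coproducts are handled by functoriality of coproducts; and thickness is deduced from the general fact that a triangulated subcategory closed under countable coproducts in a category with countable coproducts is automatically thick, recalled via the homotopy colimit of the constant idempotent system --- a construction the paper reuses later to define the image objects $A_H^0$. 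You instead lean on Proposition~\ref{pro:S-divisible} (legitimately, since it precedes the lemma and its proof is independent of it): the cone axiom follows from the triangulated five lemma applied to the natural transformation $\mathrm{id}\to{-}\otimes\Mat_{S^\infty}$, and direct summands are handled by the elementary observation that invertibility of a block-diagonal morphism forces invertibility of each block. Both arguments are sound; yours is arguably more elementary on the summand step and recycles the tensor characterisation, while the paper's stays with invertibility of $p\cdot\mathrm{id}$ itself and its Karoubianness digression does extra work elsewhere. The one point worth making explicit in your five-lemma step is that the canonical maps $X\to X\otimes\Mat_{S^\infty}$ really assemble into a morphism of exact triangles, i.e., the transformation is compatible with the triangulation and the suspension; this holds because it is given by exterior Kasparov product with a fixed class in $\KK(\C,\Mat_{S^\infty})$.
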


\begin{proof}
  The functoriality of suspensions and coproducts shows that a
  countable coproduct of suspensions of \(S\)\nb-divisible objects is
  again \(S\)\nb-divisible.  Let \(A \to B \to C \to \Sigma A\) be an
  exact triangle in \(\KK^G\).  Multiplication by~\(p\) is a
  \(\KK^G\)-equivalence if and only if its mapping cone is
  \(\KK^G\)-equivalent to~\(0\).  Since multiplication by~\(p\) is
  natural, the cones of multiplication by~\(p\) on \(A\), \(B\)
  and~\(C\) also form an exact triangle by
  \cite{Beilinson-Bernstein-Deligne}*{Proposition~1.1.11}.  Therefore,
  if two of \(A\), \(B\) and~\(C\) are \(p\)\nb-divisible for a
  prime~\(p\), so is the third.  Thus the class of \(S\)\nb-divisible
  objects in \(\KK^G\) is triangulated.

  It is
  known that a triangulated category with at least countable direct
  sums is Karoubian, that is, any idempotent has an image object; in
  particular, a triangulated subcategory closed under direct sums
  --~such as that of \(S\)\nb-divisible objects~-- is closed under
  direct summands, making it thick
  (see~\cite{Neeman:Triangulated}*{Remark 3.2.7}).  We recall how
  this is shown.  A direct summand of an object~\(A\) is the image
  of an idempotent endomorphism \(p\colon A \to A\).  The homotopy
  colimit of the constant inductive system
  \begin{equation}
    \label{eq:constant_inductive_system}
    A \xrightarrow{p} A \xrightarrow{p} A \xrightarrow{p} A \to \dotsb
  \end{equation}
  exists and has the universal property of an image object
  for~\(p\).
\end{proof}

\begin{proposition}
  \label{pro:KKGS_two_ways}
  The localising subcategory \(\KK^G_S\subseteq \KK^G\) is
  equivalent to the category with the same objects as~\(\KK^G\) and
  \(\KK^G(A, B\otimes \Mat_{S^\infty})\) as the arrows from~\(A\)
  to~\(B\), and with the composition induced by the Kasparov product
  in \(\KK^G\) followed by the canonical \(\KK^G\)-equivalence
  \(\Mat_{S^\infty} \otimes \Mat_{S^\infty} \cong \Mat_{S^\infty}\).
\end{proposition}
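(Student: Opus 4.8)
The plan is to avoid checking category axioms by hand and instead build an explicit equivalence from the isomorphism~\eqref{eq:Mat_S_isomorphism} alone. Write $D\defeq\Mat_{S^\infty}$ with the trivial $G$\nb-action, let $\eta_A\colon A\to A\otimes D$, $a\mapsto a\otimes 1$, be the canonical inclusion, and denote by $\mathcal{L}$ the proposed category, so $\mathcal{L}(A,B)=\KK^G(A,B\otimes D)$. Since $D$ is visibly of the form $\C\otimes D$, Proposition~\ref{pro:S-divisible} shows $D$ is $S$\nb-divisible and hence that $\eta_D\colon D\to D\otimes D$ is a $\KK^G$\nb-equivalence; its inverse $\mu\in\KK^G(D\otimes D,D)$ is the ``canonical $\KK^G$\nb-equivalence'' entering the composition law of $\mathcal{L}$, namely $\psi\circ_{\mathcal{L}}\phi=(\mathrm{id}_C\otimes\mu)\circ(\psi\otimes\mathrm{id}_D)\circ\phi$ for $\phi\in\mathcal{L}(A,B)$, $\psi\in\mathcal{L}(B,C)$.

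First I would define $\Phi\colon\mathcal{L}\to\KK^G_S$ by $\Phi(A)\defeq A\otimes D$ on objects, which lies in $\KK^G_S$ by Proposition~\ref{pro:S-divisible}. On morphisms, since $B\otimes D$ is $S$\nb-divisible, the isomorphism~\eqref{eq:Mat_S_isomorphism} says precomposition with $\eta_A$ is a bijection $\KK^G(A\otimes D,B\otimes D)\xrightarrow{\cong}\KK^G(A,B\otimes D)$; I let $\Phi$ on morphisms be its inverse, so that $\Phi(\phi)$ is characterised by $\Phi(\phi)\circ\eta_A=\phi$ and each $\Phi$ is a bijection on Hom\nb-sets. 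The one genuine computation is that $\Phi$ respects composition, i.e.\ $\Phi(\psi\circ_{\mathcal{L}}\phi)=\Phi(\psi)\circ\Phi(\phi)$. By the characterising property of $\Phi$ it is enough to check $\Phi(\psi)\circ\Phi(\phi)\circ\eta_A=\psi\circ_{\mathcal{L}}\phi$; since $\Phi(\phi)\circ\eta_A=\phi$, this reduces to the identity $\Phi(\psi)=(\mathrm{id}_C\otimes\mu)\circ(\psi\otimes\mathrm{id}_D)$, which I would get by precomposing both sides with $\eta_B$, using naturality of $\eta$ to rewrite $(\psi\otimes\mathrm{id}_D)\circ\eta_B$ as $\eta_{C\otimes D}\circ\psi=(\mathrm{id}_C\otimes\eta_D)\circ\psi$, and then cancelling via $\mu\circ\eta_D=\mathrm{id}_D$.

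From here everything is formal: a family of Hom\nb-set bijections carrying the tentative composition $\circ_{\mathcal{L}}$ to the genuine, hence associative and unital, composition of $\KK^G_S$ automatically makes $\circ_{\mathcal{L}}$ associative with two\nb-sided unit $\eta_A=\Phi^{-1}(\mathrm{id}_{A\otimes D})$, so $\mathcal{L}$ really is a category and $\Phi$ a fully faithful functor, and $\Phi$ is essentially surjective because any $S$\nb-divisible $B$ is $\KK^G$\nb-equivalent to $B\otimes D=\Phi(B)$ by Proposition~\ref{pro:S-divisible}. The step needing care is exactly this last one: the naive route would be to verify that $-\otimes D$ is an idempotent monad on $\KK^G$ with $\mathcal{L}$ as its Kleisli category, but the monad unit axiom amounts to triviality of the tensor flip on $D\otimes D$ in $\KK^G$, which is true (the flip on a UHF tensor square is approximately inner) but needs a separate argument. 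Transporting the structure through~\eqref{eq:Mat_S_isomorphism} of Proposition~\ref{pro:S-divisible} sidesteps this entirely.
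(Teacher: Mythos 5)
Your proposal is correct and follows essentially the same route as the paper: the paper's proof also rests entirely on Proposition~\ref{pro:S-divisible} and the isomorphism~\eqref{eq:Mat_S_isomorphism}, identifying the auxiliary category with \(\KK^G_S\) via \(A\mapsto A\otimes\Mat_{S^\infty}\) and the observation that on \(S\)\nb-divisible objects the new arrows reduce to ordinary \(\KK^G\)-arrows with the Kasparov product. The only difference is presentational: the paper declares the category axioms ``straightforward to check,'' while you obtain them for free by transporting the composition through the bijections \(\KK^G(A,B\otimes\Mat_{S^\infty})\cong\KK^G(A\otimes\Mat_{S^\infty},B\otimes\Mat_{S^\infty})\), which is a clean way to fill in that step.
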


\begin{proof}
  It is straightforward to check that there is a category with the
  same objects as \(\KK^G\), with the arrows
  \(\KK^G(A, B\otimes \Mat_{S^\infty})\), and with the
  multiplication specified in the statement.
  Proposition~\ref{pro:S-divisible} shows first that, in this
  category, every object~\(A\) is isomorphic to
  \(A\otimes \Mat_{S^\infty}\), secondly, that the latter is
  \(S\)\nb-divisible, and, thirdly, that among \(S\)\nb-divisible
  objects, arrows in this category simplify to \(\KK^G(A, B)\) with
  the usual Kasparov product as composite.
\end{proof}

We now apply the machinery of relative homological algebra to the
class of objects \(\Cont(G/H) \otimes \Mat_{S^\infty}\) for cyclic $H\subseteq G$
in the category~\(\KK^G_S\).  Whereas these objects are not
$\aleph_1$\nb-compact in \(\KK^G\), they are $\aleph_1$\nb-compact
in~\(\KK^G_S\) because
of~\eqref{eq:Mat_S_isomorphism}.

\begin{proposition}
  \label{pro:localised_bootstrap}
  The localising subcategory of~\(\KK^G_S\) generated by the objects
  \(\Cont(G/H) \otimes \Mat_{S^\infty}\) for cyclic $H\subseteq G$
  consists precisely of the \(S\)\nb-divisible objects in the equivariant bootstrap class
  in~\(\KK^G\).  An object~\(B\) in the equivariant bootstrap class
  in~\(\KK^G\) is \(S\)\nb-divisible if and only if multiplication
  by~\(p\) is an isomorphism on \(\KK^G(\Cont(G/H),B)\) for all cyclic \(H \subseteq   G\) and all \(p\in S\).
\end{proposition}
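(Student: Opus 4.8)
The plan is to prove the two assertions separately, in both cases reducing from $\KK^G_S$ to $\KK^G$ by means of Proposition~\ref{pro:S-divisible} and Proposition~\ref{pro:KKGS_two_ways}, and then invoking the generation result Corollary~\ref{cor:generate_bootstrap} (equivalently Theorem~\ref{the:generate_A_from_induced}). For the first assertion, write $\mathfrak{B}_S$ for the localising subcategory of $\KK^G_S$ generated by the objects $\Cont(G/H)\otimes\Mat_{S^\infty}$ for cyclic $H\subseteq G$. One inclusion is easy: each generator $\Cont(G/H)\otimes\Mat_{S^\infty}$ is $S$\nb-divisible by Proposition~\ref{pro:S-divisible} (it has the form $B\otimes\Mat_{S^\infty}$) and lies in the equivariant bootstrap class because $\Cont(G/H)$ does and tensoring with the fixed nuclear algebra $\Mat_{S^\infty}$ preserves the bootstrap class (being a triangulated functor commuting with coproducts); hence $\mathfrak{B}_S$ is contained in the class of $S$\nb-divisible objects in the equivariant bootstrap class. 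For the reverse inclusion, let $B$ be $S$\nb-divisible and in the equivariant bootstrap class. By Corollary~\ref{cor:generate_bootstrap}, $B$ lies in the localising subcategory of $\KK^G$ generated by $\Cont(G/H)$ for cyclic $H$. Now tensoring with $\Mat_{S^\infty}$ is a triangulated functor on $\KK^G$ commuting with countable coproducts, so $B\otimes\Mat_{S^\infty}$ lies in the localising subcategory of $\KK^G$ generated by the $\Cont(G/H)\otimes\Mat_{S^\infty}$. Since $B$ is $S$\nb-divisible, $B\cong B\otimes\Mat_{S^\infty}$ in $\KK^G_S$ (and in $\KK^G$) by Proposition~\ref{pro:S-divisible}, and under the equivalence of Proposition~\ref{pro:KKGS_two_ways} the localising subcategories of $\KK^G_S$ and of $\KK^G$ generated by a fixed family of $S$\nb-divisible objects coincide on $S$\nb-divisible objects; hence $B\in\mathfrak{B}_S$.

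For the second assertion, let $B$ be in the equivariant bootstrap class in $\KK^G$. If $B$ is $S$\nb-divisible, then multiplication by $p$ is invertible on $\KK^G(B,B)$ for $p\in S$, hence also on $\KK^G(\Cont(G/H),B)$ for every $H$, since this group is a module over $\KK^G(B,B)$ via composition on the left and the Kasparov product with $p\cdot\mathrm{id}_B$ acts as multiplication by $p$; this gives the ``only if'' direction without using the bootstrap hypothesis. For the converse, suppose multiplication by $p$ is an isomorphism on $\KK^G(\Cont(G/H),B)$ for all cyclic $H$ and all $p\in S$. Consider the class $\mathfrak{D}$ of objects $A\in\KK^G$ for which multiplication by every $p\in S$ is an isomorphism on $\KK^G_*(A,B)$. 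This class is localising: it is closed under suspension, under the two-out-of-three property along exact triangles (using the five lemma on the long exact sequence), and under countable coproducts (because $\KK^G_*(\bigoplus A_n,B)\cong\prod\KK^G_*(A_n,B)$ and multiplication by $p$ is an isomorphism on a product if and only if it is on each factor). By hypothesis $\mathfrak{D}$ contains $\Cont(G/H)$ for all cyclic $H$. Since $B$ is in the equivariant bootstrap class, Corollary~\ref{cor:generate_bootstrap} shows $B\in\mathfrak{D}$, that is, multiplication by each $p\in S$ is an isomorphism on $\KK^G(B,B)$, so $B$ is $S$\nb-divisible.

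The main point to handle carefully is the passage between localising subcategories of $\KK^G$ and of $\KK^G_S$ in the first assertion: one must check that, because all the generators $\Cont(G/H)\otimes\Mat_{S^\infty}$ are already $S$\nb-divisible and $\KK^G_S\subseteq\KK^G$ is a full subcategory closed under coproducts with the inclusion preserving exact triangles (Lemma~\ref{lem:S-divisible_localising}), the localising subcategory they generate inside $\KK^G_S$ is exactly the intersection with $\KK^G_S$ of the localising subcategory they generate inside $\KK^G$. This follows from the fact that the forgetful inclusion $\KK^G_S\hookrightarrow\KK^G$ is fully faithful, triangulated, and coproduct-preserving, so it reflects the relevant closure properties; combined with $B\cong B\otimes\Mat_{S^\infty}$ it yields the identification. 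Everything else is a routine application of the two-out-of-three property and commutation of the relevant functors with coproducts.
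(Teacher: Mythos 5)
Your proof is correct and has the same skeleton as the paper's: the easy inclusion, the reverse inclusion via Corollary~\ref{cor:generate_bootstrap} together with \(B\cong B\otimes\Mat_{S^\infty}\) from Proposition~\ref{pro:S-divisible}, and the divisibility criterion again via the generation result. The differences lie in how two steps are executed. For the reverse inclusion, the paper constructs an explicit \(\ideal\)\nb-cellular approximation tower for~\(B\) with respect to the generators \(\Cont(G/H)\), tensors it with \(\Mat_{S^\infty}\), and identifies the homotopy colimit with \(B\otimes\Mat_{S^\infty}\); you instead use the standard fact that a coproduct-preserving triangulated functor maps the localising subcategory generated by a set of objects into the one generated by its image, together with the (correct) observation that generating inside \(\KK^G_S\) or inside \(\KK^G\) yields the same subcategory because \(\KK^G_S\subseteq\KK^G\) is a localising subcategory. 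For the criterion, the paper gets the ``only if'' direction from the \(\aleph_1\)\nb-compactness isomorphism \(\KK^G_*(\Cont(G/H),B\otimes\Mat_{S^\infty})\cong\KK^G_*(\Cont(G/H),B)\otimes\Z[S^{-1}]\) and proves the converse by showing that the mapping cone of \(p\cdot\mathrm{id}_B\) is annihilated by all \(\KK^G_*(\Cont(G/H),{-})\) and hence vanishes; your direct composition argument and your localising class \(\mathfrak{D}\) with the five lemma are equivalent, slightly more elementary, versions of the same idea and avoid citing the phantom-tower machinery. One small point you should make explicit: that each generator \(\Cont(G/H)\otimes\Mat_{S^\infty}\) lies in the equivariant bootstrap class is not a formal consequence of \({-}\otimes\Mat_{S^\infty}\) being triangulated and coproduct-preserving; as in the paper, observe that it is a homotopy colimit of the objects \(\Mat_{m_n}(\Cont(G/H))\), which are Type~I and hence in the bootstrap class.
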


\begin{proof}
  As a homotopy colimit of objects of the form \(\Mat_m(\Cont(G/H))\),
  the generators \(\Cont(G/H) \otimes \Mat_{S^\infty}\) belong to the
  equivariant bootstrap class.  Hence the localising subcategory
  generated by them is contained in the latter.  It consists of
  \(S\)\nb-divisible objects by
  Lemma~\ref{lem:S-divisible_localising}.

  Conversely, let~\(B\) be \(S\)\nb-divisible and in the equivariant
  bootstrap class.  We do homological algebra in~\(\KK^G\) using the
  objects \(\{\Cont(G/H)\mid H\subseteq G \text{ cyclic}\}\); these
  are the generators of the bootstrap class by
  Corollary~\ref{cor:generate_bootstrap}.  There are enough relative
  projective objects and we may build a cellular approximation tower
  for~\(B\) from a projective resolution~\((P_n)\) as
  in~\cite{Meyer:Homology_in_KK_II}.  This is a sequence of exact
  triangles
  \[
    \tilde{B}_n\to\tilde{B}_{n+1}\to P_n\to \Sigma \tilde{B}_n.
  \]
  with certain properties.  It follows that \(\tilde{B}_n\) and~$P_n$
  are in the equivariant bootstrap class.  The homotopy colimit
  of~\((\tilde{B}_n)\) is~\(B\) by
  \cite{Meyer:Homology_in_KK_II}*{Proposition~3.18}.  Now, the tensor
  product of our cellular approximation tower with~\(\Mat_{S^\infty}\),
  \[
    \tilde{B}_n\otimes \Mat_{S^\infty}
    \to\tilde{B}_{n+1}\otimes \Mat_{S^\infty}
    \to P_n\otimes \Mat_{S^\infty}
    \to \Sigma \tilde{B}_n\otimes \Mat_{S^\infty},
  \]
  gives a cellular approximation tower in~\(\KK^G_S\).  Its homotopy
  colimit is \(B\otimes \Mat_{S^\infty}\).  Therefore, the latter
  belongs to the localising subcategory generated by the objects
  \(\Cont(G/H) \otimes \Mat_{S^\infty}\) with cyclic~$H$.  Since~\(B\)
  is \(S\)\nb-divisible, \(B \cong B\otimes \Mat_{S^\infty}\).

  Finally, we prove the criterion for \(S\)\nb-divisibility.  Since
  each~\(\Cont(G/H)\) is \(\aleph_1\)\nb-compact,
  \[
    \KK^G_*(\Cont(G/H),B \otimes \Mat_{S^\infty})
    \cong \KK^G_*(\Cont(G/H),B) \otimes \Z[S^{-1}]
  \]
  holds for all~\(B\).  (We remark that this isomorphism relates the
  approach to localisation that we follow here to that by tensoring
  the arrow spaces with \(\Z[S^{-1}]\).)  Therefore,
  \(\KK^G_*(\Cont(G/H),B) \cong \KK^G_*(\Cont(G/H),B) \otimes
  \Z[S^{-1}]\) holds if~\(B\) is \(S\)\nb-divisible, and
  multiplication by~\(p\) for \(p\in S\) is invertible on this
  group.  Conversely, assume multiplication by~\(p\) for \(p\in S\)
  is invertible on \(\KK^G_*(\Cont(G/H),B)\) for all cyclic
  \(H\subseteq G\).  We must show that \(p\cdot \mathrm{id}_B\) is
  invertible in \(\KK^G_0(B,B)\).  Equivalently, its mapping
  cone~\(C\) is~\(0\).  Since this mapping cone still belongs to the
  equivariant bootstrap class, \(C \cong 0\) if and only
  \(\KK^G_*(\Cont(G/H),C) \cong 0\) for all cyclic \(H\subseteq G\).
  By the Puppe sequence, this happens if and only if multiplication
  by~\(p\) is an isomorphism on \(\KK^G_*(\Cont(G/H),B)\).
\end{proof}

\section{Localisation at the group order}
\label{sec:localisation_group_order}

While the modular representation theory of groups may be very
complicated, it becomes relatively easy over a field in which the
group order is invertible.  In this section, we simplify the
equivariant bootstrap class after localising at the group order.
For finite cyclic groups, this is already shown by Manuel K\"ohler
(see~\cite{Koehler:Thesis}*{Theorem~13.1}).

Let~\(S\) be the (finite) set of
primes that divide the order~\(\abs{G}\) of~\(G\).  We are going to
work in the localising subcategory of \(S\)\nb-divisible objects in
the equivariant bootstrap class in \(\KK^G\).  This is described in
Proposition~\ref{pro:localised_bootstrap} as the localising
subcategory generated by the objects
\(\Cont(G/H) \otimes \Mat_{S^\infty}\) for cyclic subgroups \(H\subseteq G\).

\begin{remark}
  We could also localise at a larger set of primes or even tensor
  with~\(\mathbb{Q}\) as
  in~\cite{Bouc-DellAmbrogio-Martos:Splitting}.  We do not discuss
  this because such a localisation may be obtained by first
  localising at the primes dividing~\(\abs{G}\) and then localising
  once again at the remaining primes.  So the statements we are
  going to prove imply the more general statements.
\end{remark}

The main result in this section is a Universal Coefficient Theorem
in this setting.  We first formulate this theorem, which requires
some notation.  For a cyclic subgroup \(H\subseteq G\), let
\(n\defeq \abs{H}\), let~\(\vartheta_n\) be a primitive \(n\)th root
of unity, and let
\[
  N_H \defeq \{y\in G \mid y H y^{-1} = H\}, \qquad
  W_H \defeq N_H/H.
\]
The representation ring of~\(H\) is isomorphic to~\(\Z[z]/(z^n-1)\),
and \(\Z[\vartheta_n] \subseteq \C\) is a quotient of that by
evaluation at~\(\vartheta_n\).  The \(n\)th cyclotomic
polynomial~\(\Phi_n\) is the minimal polynomial of~\(\vartheta_n\),
that is, \(\Z[\vartheta_n] \cong \Z[z]/(\Phi_n)\).  The quotient
\(\Z[\vartheta_n]\) of the representation ring of~\(H\) is invariant
under the induced action of group automorphisms of~\(H\).
Conjugation by elements of~\(N_H\) defines automorphisms of~\(H\),
so that~\(N_H\) acts on \(\Z[\vartheta_n]\) in a canonical way.
Since elements of~\(H\) act trivially, this induces an action
of~\(W_H\) on the representation ring and then on the quotient
\(\Z[\vartheta_n]\).  Let \(\Z[\vartheta_n] \rtimes W_H\) be the
resulting semidirect product.

Let~\(B\) be an object of \(\KK^G_S\), that is, \(B\) is a
\(G\)\nb-action on a separable \(\Cst\)\nb-algebra and \(B\) is
\(\KK^G\)-equivalent to \(\Mat_{S^\infty} \otimes B\).  The elements
corresponding to $H$\nb-representations and conjugations span a
subring in the endomorphism ring \(\KK^G(\Cont(G/H),\Cont(G/H))\)
that is isomorphic to \(\Z[z]/(z^n-1) \rtimes W_H\).  We quickly
explained this in the introduction, and it also follows from
Theorem~\ref{thm:ivo}.  So \(\KK^G_*(\Cont(G/H),B)\) becomes a
\(\Z/2\)-graded module over the latter ring.  Let
\[
  F_*^H(B) \defeq
  \{x \in \KK^G_*(\Cont(G/H),B) \mid \Phi_n(z)\cdot x =0\}.
\]
This subgroup is a \(\Z/2\)-graded module over the quotient ring
\(\Z[\vartheta_n] \rtimes W_H\).  Since~\(B\) is \(S\)\nb-divisible,
multiplication by~\(\abs{G}\) is invertible on~\(F_*^H(B)\), so that it
becomes a \(\Z/2\)-graded module over
\(\Z[\vartheta_n,1/\abs{G}] \rtimes W_H\).

\begin{theorem}
  \label{the:UCT_S}
  Let~\(G\) be a finite group.  Let~\(\mathfrak{A}_G\) be the
  product of the categories of\/ \(\Z/2\)-graded, countable modules
  over the rings \(\Z[\vartheta_n,1/\abs{G}] \rtimes W_H\),
  where~\(H\) runs through a set of representatives for the
  conjugacy classes of cyclic subgroups in~\(G\).  This stable
  Abelian category is hereditary, that is, any object has a
  projective resolution of length~\(1\).  The functors~\(F_*^H\) for
  these~\(H\) combine to a stable homological functor
  \(F\colon \KK^G_S \to \mathfrak{A}_G\).  If \(A,B\in\KK^G_S\)
  and~\(A\) belongs to the equivariant bootstrap class in \(\KK^G\),
  then there is a Universal Coefficient Theorem
  \[
    \Ext_{\mathfrak{A}_G} \bigl( F(A), F(\Sigma B)\bigr)
    \into \KK^G_S(A,B)
    \prto\Hom_{\mathfrak{A}_G} \bigl( F(A), F(B)\bigr).
  \]

  The functor~\(F\) induces a bijection between isomorphism classes
  of \(S\)\nb-divisible objects in the \(G\)\nb-equivariant
  bootstrap class and isomorphism classes of objects
  in~\(\mathfrak{A}_G\).
\end{theorem}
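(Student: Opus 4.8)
The plan is to apply the relative homological algebra of Section~\ref{sub:hom} to the triangulated category $\mathfrak T\defeq\KK^G_S$ with the set of generators $\mathfrak C\defeq\{\Cont(G/H)\otimes\Mat_{S^\infty}\mid H\subseteq G\text{ cyclic}\}$, one object per conjugacy class of cyclic subgroups. As noted before Proposition~\ref{pro:localised_bootstrap}, these objects are $\aleph_1$-compact in $\KK^G_S$ by virtue of~\eqref{eq:Mat_S_isomorphism}, and by Proposition~\ref{pro:localised_bootstrap} the localising subcategory $\langle\mathfrak C\rangle$ is exactly the class of $S$-divisible objects in the equivariant bootstrap class; in particular it contains~$A$. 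So Lemma~\ref{lem:universal_functor} furnishes the universal $\ideal_{\mathfrak C}$-exact functor $U_{\mathfrak C}\colon\KK^G_S\to\mathfrak{Mod}(\mathfrak C^{\mathrm{op}})_{\aleph_1}$, and Theorem~\ref{thm:trspectral} provides the spectral sequence and --- once projective dimensions are known to be at most~$1$ --- the short exact sequence. It then remains to (i)~identify $\mathfrak{Mod}(\mathfrak C^{\mathrm{op}})_{\aleph_1}$ with~$\mathfrak A_G$ so that $U_{\mathfrak C}$ becomes~$F$, (ii)~prove $\mathfrak A_G$ hereditary, and (iii)~deduce the classification of objects.

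The heart of the argument, and the step I expect to be the main obstacle, is~(i). First, $\KK^G_{S,*}(\Cont(G/H)\otimes\Mat_{S^\infty},\Cont(G/L)\otimes\Mat_{S^\infty})\cong\KK^G_*(\Cont(G/H),\Cont(G/L))\otimes_{\Z}\Z[1/\abs{G}]$ by~\eqref{eq:Mat_S_isomorphism} and the $\aleph_1$-compactness of the $\Cont(G/H)$ in $\KK^G$; restricting $\Cont(G/L)$ to an $H$-set shows this group sits in even degree and, by Theorem~\ref{thm:ivo}, is computed by the representation Green functor $R^G_{\textup{cy}}$. Hence, modulo the harmless doubling coming from the $\Z/2$-grading, the category $\mathfrak C$ is the $\Z[1/\abs{G}]$-linearisation of the full subcategory of $\mathfrak{Mod}^{\Z/2}(R^G_{\textup{cy}})$ on the representables $\mathrm{ck}^G_*\Cont(G/H)$, and $U_{\mathfrak C}$ is $\mathrm{ck}^G_*$ localised at~$S$ (compare Proposition~\ref{pro:Mackey_universal}). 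Now I would invoke the two algebraic simplifications that occur once $\abs{G}$ is inverted. Over $\Z[1/\abs{G}]$ the factorisation $z^n-1=\prod_{d\mid n}\Phi_d(z)$ has pairwise comaximal factors, so $R(H)\otimes\Z[1/\abs{G}]\cong\prod_{d\mid n}\Z[\vartheta_d,1/\abs{G}]$; and a Mackey functor over a commutative ring in which the group order is invertible decomposes along primitive idempotents indexed by conjugacy classes of subgroups, with $H$-th corner ring built from the Weyl group $W_H$ --- this is the cyclic-group computation of K\"ohler~\cite{Koehler:Thesis}, carried out here on the family of cyclic subgroups. Combining these two families of idempotents produces an orthogonal family in $\mathfrak C$, indexed by conjugacy classes of cyclic $H\subseteq G$, whose $H$-th corner ring is $\Z[\vartheta_n,1/\abs{G}]\rtimes W_H$ and which splits off every object of~$\mathfrak C$; since $\KK^G_S$ is idempotent-complete (Lemma~\ref{lem:S-divisible_localising}), this yields an equivalence $\mathfrak{Mod}(\mathfrak C^{\mathrm{op}})_{\aleph_1}\simeq\mathfrak A_G$. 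Tracing the idempotents through, $U_{\mathfrak C}$ sends an object~$B$ to the family of $\Phi_n$-torsion submodules $F_H\subseteq\KK^G_*(\Cont(G/H),B)$, i.e.\ to $F(B)$; in particular $F$ is a stable homological functor, being $U_{\mathfrak C}$ transported along the equivalence, with values in countable modules because $\KK^G_*(\Cont(G/H),B)\cong\K^H_*(B)$ is countable.

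For~(ii): $\Z[\vartheta_n]$ is the ring of integers of $\mathbb Q(\vartheta_n)$, hence a Dedekind domain, so its localisation $\Z[\vartheta_n,1/\abs{G}]$ has global dimension at most~$1$; and since $\abs{W_H}=\abs{N_H/H}$ divides $\abs{G}$ and is therefore invertible, an averaging argument identifies $\Ext^i$ over $\Z[\vartheta_n,1/\abs{G}]\rtimes W_H$ with the $W_H$-invariants of $\Ext^i$ over $\Z[\vartheta_n,1/\abs{G}]$, so the crossed product is again of global dimension at most~$1$ and the product category $\mathfrak A_G$ is hereditary. Consequently $F(A)=U_{\mathfrak C}(A)$ has a projective resolution of length~$1$, and the short exact sequence of Theorem~\ref{thm:trspectral}, rewritten via the equivalence of~(i) and the identity $\Ext^1_{\mathfrak A_G}(F(\Sigma A),F(B))=\Ext_{\mathfrak A_G}(F(A),F(\Sigma B))$ (valid since $\Sigma$ is idempotent), is precisely the claimed Universal Coefficient Theorem, with its naturality in $A$ and~$B$.

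Finally~(iii). For uniqueness: if $A,A'$ are $S$-divisible objects in the bootstrap class and $\varphi\colon F(A)\xrightarrow{\sim}F(A')$, lift $\varphi$ through the surjection of the short exact sequence to some $f\in\KK^G_S(A,A')$; its mapping cone lies in $\langle\mathfrak C\rangle$ and is killed by $U_{\mathfrak C}$, hence is $\ideal_{\mathfrak C}$-contractible and therefore zero, so $f$ is an isomorphism. For existence: given countable $\Z/2$-graded modules $M_H$, form $M\defeq(M_H)_H\in\mathfrak A_G$ and choose a projective resolution $0\to P_1\to P_0\to M\to0$ of length~$1$. Every countable projective of $\mathfrak A_G$ is $U_{\mathfrak C}$ of an object of $\langle\mathfrak C\rangle$ --- a direct summand, cut out by an idempotent, of a countable coproduct of the generators and their suspensions, which exists by idempotent-completeness --- and morphisms between such objects of $\KK^G_S$ surject onto the corresponding morphisms of $\mathfrak A_G$ because the generators are $\ideal_{\mathfrak C}$-projective. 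Lifting $P_1\to P_0$ to a morphism $\widetilde P_1\to\widetilde P_0$ in $\langle\mathfrak C\rangle$ and passing to its mapping cone (up to one suspension) produces a separable $S$-divisible object $B$ in the bootstrap class with $F(B)\cong M$. Together with uniqueness, this gives the asserted bijection on isomorphism classes.
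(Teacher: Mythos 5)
Your proposal is correct and follows the same architecture as the paper: work in \(\KK^G_S\) with the \(\aleph_1\)-compact generators \(\Cont(G/H)\otimes\Mat_{S^\infty}\) for cyclic \(H\), split them using the cyclotomic idempotents available once \(\abs{G}\) is inverted, identify the resulting module category with \(\mathfrak{A}_G\), prove heredity, and then feed everything into Theorem~\ref{thm:trspectral}; your step~(iii) spells out the realisation and rigidity arguments that the paper dispatches as ``well known''. The one place where you genuinely diverge is the justification of step~(i): you invoke the general decomposition of Mackey modules over a ring in which \(\abs{G}\) is invertible (the route taken rationally by Bouc, Dell'Ambrogio and Martos), whereas the paper deliberately avoids this and proves the splitting by hand. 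Concretely, the paper shows via the character formula for \(\psi_{n,k}\) and Frobenius reciprocity that the non-top cyclotomic summands of \(\Cont(G/H)\otimes\Mat_{S^\infty}\) are redundant (Proposition~\ref{pro:fewer_generators_localisation}), and uses Dell'Ambrogio's explicit generators for \(\KK^G(\Cont(G/H),\Cont(G/H'))\) from Theorem~\ref{thm:ivo} to get orthogonality of the corner objects \(A^0_H\) for non-conjugate \(H\) (Lemma~\ref{lem:AH_zero_orthogonal}) and the identification of the corner ring with \(\Z[\vartheta_n,1/\abs{G}]\rtimes W_H\). If you take the general-theory route, the points you would still need to check are exactly these: that the relevant ``corner'' (Brauer quotient) of \(R(H)\otimes\Z[1/\abs{G}]\) at a cyclic \(H\) of order \(n\) is \(\Z[\vartheta_n,1/\abs{G}]\), i.e.\ that the top cyclotomic idempotent annihilates everything induced from proper subgroups --- which is precisely the paper's character/Frobenius computation --- so the two arguments converge, yours buying brevity at the cost of importing the structure theory of Mackey functors, the paper's staying elementary with explicit polynomials (which is its stated selling point for inverting only \(\abs{G}\) rather than tensoring with \(\mathbb{Q}\)). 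Your heredity argument (Dedekind domain plus averaging identifying \(\Ext\) over the crossed product with \(W_H\)-invariants) is a mild variant of Lemma~\ref{lem:hereditary_WH}, which instead shows that projectivity over \(\Z[\vartheta_n,1/\abs{G}]\) already implies projectivity over the crossed product; both are fine.
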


We will prove this theorem in the remainder of this section.

We begin by defining an idempotent in~\(\Cont(G/H)\) that produces
the invariant~\(F_*^H\).  It involves fractions with~\(\abs{G}\) in
the denominator, so that it only exists after inverting the primes
in~\(S\).  The construction uses some facts about representation
rings and cyclotomic polynomials which are already used by Köhler
in~\cite{Koehler:Thesis} to prove the special case of our main
result when the whole group~\(G\) is cyclic.

Let \(\Phi_k(z)\in \Z[z]\) be the \(k\)th cyclotomic polynomial, whose
roots are exactly the primitive \(k\)th roots of unity.  Then
\begin{equation}
  \label{eq:factorize_cyclotomic}
  z^n-1 =  \prod_{k\mid n} \Phi_k(z).
\end{equation}
For \(k \mid n\), let
\[
  \psi_{n,k}(z) \defeq \frac{z}{n} \frac{\mathrm{d} \Phi_k(z)}{\mathrm{d}z}
    \cdot \prod_{k'\mid n, k' \neq k} \Phi_{k'}(z)
    = \frac{z(z^n-1)}{n \Phi_k(z)} \frac{\mathrm{d}\Phi_k(z)}{\mathrm{d}z}.
\]
By definition, \(n\cdot \psi_{n,k} \in \Z[z]\).  So~\(\psi_{n,k}\)
only becomes available after localisation at~\(n\).

\begin{lemma}[\cite{Koehler:Thesis}*{Lemmas 22.5 and 22.6}]
  \label{lem:psi_lelations_Koehler}
  The polynomials \(\psi_{n,k}\) form a complementary set of
  idempotent elements in the ring \(\Z[z,1/n]/(z^n-1)\), that is,
  \[
    \psi_{n,k}\cdot \psi_{n,l}
    \equiv \delta_{k,l} \psi_{n,k} \bmod (z^n-1),\qquad
    \sum_{k\mid n} \psi_{n,k} \equiv 1 \bmod (z^n-1).
  \]
\end{lemma}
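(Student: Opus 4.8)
The plan is to deduce all three congruences from a single polynomial identity, namely the one obtained by differentiating the factorisation~\eqref{eq:factorize_cyclotomic}. First I would differentiate $z^n-1 = \prod_{k\mid n}\Phi_k(z)$ with respect to~$z$; the product rule gives
\[
  n z^{n-1} = \sum_{k\mid n} \frac{\mathrm{d}\Phi_k(z)}{\mathrm{d}z}
  \prod_{k'\mid n,\ k'\neq k}\Phi_{k'}(z).
\]
Multiplying both sides by~$z/n$ turns the right-hand side into $\sum_{k\mid n}\psi_{n,k}(z)$ and the left-hand side into~$z^n$, so that $\sum_{k\mid n}\psi_{n,k}(z) = z^n$ holds as an identity in $\Z[z,1/n]$. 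Reducing modulo $z^n-1$ gives the completeness relation $\sum_{k\mid n}\psi_{n,k}\equiv 1$.

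Next I would establish the orthogonality relation $\psi_{n,k}\psi_{n,l}\equiv 0 \bmod (z^n-1)$ for $k\neq l$. Writing out the product using the first formula for~$\psi_{n,k}$, the factor $\prod_{k'\mid n,\ k'\neq k}\Phi_{k'}(z)$ contributed by~$\psi_{n,k}$ contains $\Phi_l(z)$, the factor $\prod_{l'\mid n,\ l'\neq l}\Phi_{l'}(z)$ contributed by~$\psi_{n,l}$ contains $\Phi_k(z)$, and $\Phi_m(z)$ for any divisor $m\neq k,l$ occurs in both. Hence the product of these two factors is divisible by $\prod_{m\mid n}\Phi_m(z) = z^n-1$, and since $z^n-1$ is monic the quotient again lies in $\Z[z,1/n]$; this yields the claim. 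The idempotency relation $\psi_{n,k}^2\equiv\psi_{n,k}$ then follows formally from the previous two: modulo $z^n-1$ one has $\psi_{n,k} \equiv \psi_{n,k}\sum_{l\mid n}\psi_{n,l} = \psi_{n,k}^2 + \sum_{l\mid n,\ l\neq k}\psi_{n,k}\psi_{n,l} \equiv \psi_{n,k}^2$.

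I do not expect a genuine obstacle here; this is essentially Köhler's argument, and the only point deserving a word is the bookkeeping that divisibility of a polynomial with coefficients in~$\Z[1/n]$ by the \emph{monic} polynomial $z^n-1$ produces a quotient again over~$\Z[1/n]$, so that all the congruences really hold in $\Z[z,1/n]/(z^n-1)$ and not merely after tensoring with~$\mathbb{Q}$. Alternatively one could just cite \cite{Koehler:Thesis}*{Lemmas 22.5 and 22.6} as the statement already does, but the derivation above is short enough to include in full.
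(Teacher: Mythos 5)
Your proposal is correct and is essentially the paper's own argument: differentiate \(z^n-1=\prod_{k\mid n}\Phi_k(z)\) and multiply by \(z/n\) to get the completeness relation, observe that for \(k\neq l\) the product \(\psi_{n,k}\psi_{n,l}\) contains every cyclotomic factor \(\Phi_m\) with \(m\mid n\) and is hence divisible by \(z^n-1\), and deduce idempotency formally from these two relations. The extra remark about divisibility by the monic polynomial \(z^n-1\) staying within \(\Z[z,1/n]\) is a harmless added detail; in fact the divisibility is already visible from the explicit factorisation.
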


\begin{proof}
  The relation \(\sum_{k\mid n} \psi_{n,k} \equiv 1 \bmod (z^n-1)\)
  follows by differentiating \(z^n-1 = \prod_{k\mid n} \Phi_k(z)\)
  and multiplying by~\(z/n\).  The
  relation~\eqref{eq:factorize_cyclotomic} implies that \(z^n-1\)
  divides \(\psi_{n,k}\psi_{n,l}\) if \(k\neq l\), giving
  \(\psi_{n,k}\cdot \psi_{n,l} \equiv 0 \bmod (z^n-1)\) in this
  case.  Together with
  \(\sum_{k\mid n} \psi_{n,k} \equiv 1 \bmod (z^n-1)\), this implies
  \(\psi_{n,k}\cdot \psi_{n,k} \equiv \psi_{n,k} \bmod (z^n-1)\).
\end{proof}

The lemma implies an isomorphism of rings
\begin{equation}
  \label{eq:cyclic_rep_localized}
  \Z[z,1/n]/(z^n-1)
  \cong \bigoplus_{k\mid n} \Z[z,1/n]/(\Phi_k)
  \cong \bigoplus_{k\mid n} \Z[\vartheta_k,1/n],
\end{equation}
where \(\vartheta_k\in \C\) is a primitive \(k\)th root of unity
(see \cite{Koehler:Thesis}*{Proposition 22.8}).

Next we are going to compute the character of~\(\psi_{n,k}\).
Mapping a representation to its character defines an injective map
from the representation ring to the ring of class functions with
pointwise multiplication.  This remains injective after inverting
some rational numbers.  Since our cyclic group~\(H\) is Abelian, all
functions are class functions.  The character homomorphism maps the
generator \(z\in \Z[z]/(z^n-1)\) of the representation ring to the
function \(\Z/n\to \C\), \(j\mapsto \vartheta_n^j\).  Thus the image
of \(p\in \Z[z,1/n]/(z^n-1)\) is the function that maps
\(j\in \Z/n\) to \(p(\vartheta_n^j)\).

\begin{lemma}
  \label{lem:character_psi}
  The character of~\(\psi_{n,k}\) is the characteristic function of
  the subset of elements of~\(\Z/n\) of order equal to~\(k\).
\end{lemma}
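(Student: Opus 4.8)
The plan is to compute the character of $\psi_{n,k}$ directly from its definition, using the formula
\[
  \psi_{n,k}(z) = \frac{z(z^n-1)}{n\,\Phi_k(z)}\,\frac{\mathrm{d}\Phi_k(z)}{\mathrm{d}z}
\]
evaluated at a root of unity $\vartheta_n^j$ for $j\in\Z/n$, which is the character value at $j$. The factor $z^n-1$ vanishes at every such point, so at first sight everything is zero; the subtlety is that $\Phi_k(z)$ also vanishes at $\vartheta_n^j$ precisely when $\vartheta_n^j$ has order exactly $k$, i.e. when $k\mid n$ is the order of $j$ in $\Z/n$. So I would split into cases according to whether $\Phi_k(\vartheta_n^j)=0$ or not.

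In the case where $\vartheta_n^j$ does \emph{not} have order~$k$, the denominator $\Phi_k(\vartheta_n^j)$ is a nonzero element while the numerator contains the factor $(\vartheta_n^j)^n-1=0$, so $\psi_{n,k}(\vartheta_n^j)=0$, giving character value~$0$. In the case where $\zeta\defeq\vartheta_n^j$ \emph{does} have order exactly~$k$, both $z^n-1$ and $\Phi_k(z)$ have a simple zero at $\zeta$ (simple because $z^n-1$ has distinct roots, and $\Phi_k$ divides it), so the quotient $(z^n-1)/\Phi_k(z)$ extends to a regular function at $\zeta$ with value $\lim_{z\to\zeta}(z^n-1)/\Phi_k(z)$. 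By l'Hôpital this limit equals $(nz^{n-1})/\Phi_k'(z)$ evaluated at~$\zeta$, i.e. $n\zeta^{n-1}/\Phi_k'(\zeta) = n\zeta^{-1}/\Phi_k'(\zeta)$ using $\zeta^n=1$. Substituting into the formula for $\psi_{n,k}$ gives
\[
  \psi_{n,k}(\zeta) = \frac{\zeta}{n}\cdot\Phi_k'(\zeta)\cdot\frac{n\zeta^{-1}}{\Phi_k'(\zeta)} = 1,
\]
provided $\Phi_k'(\zeta)\neq 0$, which holds because $\zeta$ is a simple root of $\Phi_k$. Hence the character takes the value~$1$ exactly on those $j\in\Z/n$ whose order is~$k$, and~$0$ elsewhere.

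The only real point requiring care is the simplicity of the zeros and the l'Hôpital step: one must note that $\psi_{n,k}$, being an element of $\Z[z,1/n]/(z^n-1)$, has a well-defined value at each $\vartheta_n^j$ obtained by evaluating \emph{any} polynomial representative, and that the fractional expression above, while not a polynomial, reduces modulo $(z^n-1)$ to the polynomial $\psi_{n,k}$; so the evaluation can legitimately be carried out on the fractional form by a limiting argument since polynomial and rational representatives agree wherever both are defined. Alternatively, and perhaps more cleanly, I would avoid limits entirely by using Lemma~\ref{lem:psi_lelations_Koehler}: the $\psi_{n,k}$ are orthogonal idempotents summing to~$1$, so their characters are orthogonal idempotent functions on $\Z/n$ summing to the constant function~$1$, hence each is the characteristic function of some subset, and these subsets partition $\Z/n$. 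It then remains only to identify which subset goes with which~$k$, and for that it suffices to check that $\psi_{n,k}(\zeta)\neq 0$ for one element $\zeta$ of order~$k$ (e.g. via the residue computation above, or by observing $\psi_{n,k}$ is divisible by $\prod_{k'\neq k}\Phi_{k'}$, which is nonzero at $\zeta$, while the remaining factor $\frac{z}{n}\Phi_k'(z)$ is also nonzero there). I expect the main obstacle to be purely expository: making the l'Hôpital/residue manipulation rigorous in the quotient ring $\Z[z,1/n]/(z^n-1)$ without belabouring it.
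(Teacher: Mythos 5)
Your primary argument is correct, but it takes a more computational route than the paper. The paper's proof uses only two soft facts: by construction \(\psi_{n,k}\) contains the factor \(\prod_{k'\mid n,\,k'\neq k}\Phi_{k'}\), so its character vanishes at every \(j\in\Z/n\) whose order is a divisor \(l\neq k\); and by Lemma~\ref{lem:psi_lelations_Koehler} the characters of all the \(\psi_{n,k}\) sum to the constant function \(1\), so at a point of order \(k\) the single possibly nonzero summand \(\psi_{n,k}\) is forced to take the value \(1\) --- no evaluation of \(\Phi_k'\) is needed. You instead compute the value at a primitive \(k\)th root of unity directly, via l'H\^{o}pital applied to \((z^n-1)/\Phi_k(z)\) (equivalently, by differentiating \(z^n-1=\prod_{k'\mid n}\Phi_{k'}\) and evaluating), using that \(z^n-1\) has simple roots; this is a rigorous, self-contained calculation that avoids the sum-to-one relation for this step, at the cost of the limiting/derivative bookkeeping you describe, and it proves the statement in full. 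Your sketched ``cleaner alternative'' is essentially the paper's argument, but as stated it has a small slip: knowing that the supports of the idempotent characters partition \(\Z/n\) and that \(\psi_{n,k}\) is nonzero at \emph{one} element of order \(k\) does not by itself identify the supports (two partitions can meet pairwise without coinciding). You need either nonvanishing at \emph{every} element of order \(k\) --- which your parenthetical divisibility observation in fact supplies, since it applies to any primitive \(k\)th root --- or, as in the paper, vanishing of \(\psi_{n,k}\) at all elements of order \(\neq k\) combined with \(\sum_{k\mid n}\psi_{n,k}\equiv 1\bmod (z^n-1)\).
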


\begin{proof}
  By construction, \(\psi_{n,k}\) vanishes at the primitive \(l\)th
  roots of unity for all divisors \(l \mid n\) with \(l\neq k\).
  Then Lemma~\ref{lem:psi_lelations_Koehler} shows that the
  character of~\(\psi_{n,k}\) is the characteristic function of the
  set of all \(j\in \Z/n\) for which~\(\vartheta_n^j\) is a
  primitive \(k\)th root of unity.  This is equivalent to~\(j\)
  having order equal to~\(k\) in~\(\Z/n\).
\end{proof}

The endomorphism ring \(\KK^G(\Cont(G/H),\Cont(G/H))\) in
Theorem~\ref{thm:ivo} was computed by K\"ohler~\cite{Koehler:Thesis}
for cyclic groups and in general by Ivo
Dell'Ambrogio~\cite{dellAmbrogio:Cell_G}.  We have already explained
in the introduction how to map the representation ring of~\(H\) into
it.  When~\(H\) is a cyclic subgroup of order~\(n\), we thus get an
embedding
\[
  \Z[z]/(z^n-1) \hookrightarrow\KK^G(\Cont(G/H),\Cont(G/H)).
\]
In particular, the integer polynomials~\(n\psi_{n,k}\) give elements
in this ring.  After localising at~\(\abs{G}\), we may divide these
elements by~\(n\) and get idempotent elements
\[
  p_{n,k} \in \KK^G_S(\Cont(G/H) \otimes \Mat_{S^\infty},\Cont(G/H)
  \otimes \Mat_{S^\infty}).
\]
These satisfy the relations in
Lemma~\ref{lem:psi_lelations_Koehler}.  In particular, they are
complementary idempotents.  Actually, we only need the idempotent
element \(p_n \defeq p_{n,n}\).  The proof of
Lemma~\ref{lem:S-divisible_localising} shows that it has an image
object in \(\KK^G_S\), which we denote by~\(A_H^0\).

\begin{proposition}
  \label{pro:fewer_generators_localisation}
  Let \(H\subseteq G\) be a cyclic subgroup.  Assume that~\(S\)
  contains all prime divisors of~\(\abs{H}\).  In the localisation
  of~\(\KK^G\) at~\(S\), the object \(\Cont(G/H)\) in~\(\KK^G\)
  becomes isomorphic to a direct sum of~\(A_H^0\) and certain direct
  summands of~\(A_K^0\) for subgroups \(K\subseteq H\).
\end{proposition}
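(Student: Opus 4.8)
First I would decompose $\Cont(G/H)$ in $\KK^G_S$ along the complementary idempotents $p_{n,k}$, $k\mid n$, and then try to recognise every summand other than $A_H^0$ as a retract of some $A_K^0$ with $K\subsetneq H$. Since $H$ is cyclic of order $n=\abs{H}$, for each divisor $k\mid n$ there is a unique subgroup $K_k\subseteq H$ with $\abs{K_k}=k$, and these exhaust the subgroups of $H$. By Lemma~\ref{lem:psi_lelations_Koehler}, transported along the ring homomorphism $\Z[z]/(z^n-1)\hookrightarrow\KK^G(\Cont(G/H),\Cont(G/H))$ of Theorem~\ref{thm:ivo}, the $p_{n,k}$ are complementary idempotents in $\KK^G_S(\Cont(G/H)\otimes\Mat_{S^\infty},\Cont(G/H)\otimes\Mat_{S^\infty})$; since $\KK^G_S$ is Karoubian (image objects of idempotents were constructed in the proof of Lemma~\ref{lem:S-divisible_localising}), this yields
\[
  \Cont(G/H)\otimes\Mat_{S^\infty}\;\cong\;\bigoplus_{k\mid n}\operatorname{im}(p_{n,k}),
\]
with $\operatorname{im}(p_{n,n})=A_H^0$ by definition. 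Since $\Cont(G/H)$ and $\Cont(G/H)\otimes\Mat_{S^\infty}$ are isomorphic in $\KK^G_S$ (Proposition~\ref{pro:KKGS_two_ways}), it then suffices to exhibit $\operatorname{im}(p_{n,k})$ as a direct summand of $A_{K_k}^0$ for each proper divisor $k\mid n$.

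Next, fix such a $k$, write $K=K_k$, and note that $A_K^0=\operatorname{im}(p_{k,k})$, where $p_{k,k}$ now denotes the analogous idempotent on $\Cont(G/K)\otimes\Mat_{S^\infty}$ built for the cyclic group $K$ of order $k$. As $H$ is finite, $\Ind_K^H$ is both left and right adjoint to $\Res^H_K$; I would take $u\colon\C\to\Ind_K^H\Res^H_K\C=\Cont(H/K)$ to be the unit of the adjunction in which $\Ind_K^H$ is right adjoint and $c\colon\Cont(H/K)=\Ind_K^H\Res^H_K\C\to\C$ the counit of the one in which it is left adjoint, both in $\KK^H$, and set $r\defeq\Ind_H^G(u)\colon\Cont(G/H)\to\Cont(G/K)$ and $t\defeq\Ind_H^G(c)\colon\Cont(G/K)\to\Cont(G/H)$, using transitivity of induction. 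Because $u$ and $c$ are natural and $\Ind_K^G=\Ind_H^G\circ\Ind_K^H$, the maps $r$ and $t$ intertwine the $R(H)$- and $R(K)$-module structures along the restriction homomorphism $\Res^H_K\colon R(H)\to R(K)$. After inverting the primes of $S$, Lemma~\ref{lem:character_psi} shows $\Res^H_K(\psi_{n,l})=\psi_{k,l}$ when $l\mid k$ and $\Res^H_K(\psi_{n,l})=0$ otherwise, because restricting the characteristic function of the order-$l$ elements of $\Z/n$ to $K$ records the orders of the elements of $\Z/k$. In particular $\Res^H_K(\psi_{n,k})=\psi_{k,k}$, so $r$ and $t$ restrict to morphisms $\bar r\colon\operatorname{im}(p_{n,k})\to A_K^0$ and $\bar t\colon A_K^0\to\operatorname{im}(p_{n,k})$ in $\KK^G_S$.

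Then I would compute $\bar t\circ\bar r$. One has $t\circ r=\Ind_H^G(c\circ u)$, and since $u$ and $c$ induce $\Res^H_K$ and $\Ind_K^H$ on equivariant $\K$-theory, the element $c\circ u\in\KK^H_0(\C,\C)=R(H)$ equals $\Ind_K^H(1_{R(K)})=[\C[H/K]]$, the class of the permutation representation, which as a polynomial is $(z^n-1)/(z^k-1)=\prod_{l\mid n,\,l\nmid k}\Phi_l(z)$ by~\eqref{eq:factorize_cyclotomic}. On the summand $\operatorname{im}(p_{n,k})$, multiplication by this element equals multiplication by $\psi_{n,k}\cdot\Ind_K^H(1)$, which by the projection formula is $\Ind_K^H(\Res^H_K\psi_{n,k})=\Ind_K^H(\psi_{k,k})$; a short computation with characters of the Abelian group $H$ gives $\Ind_K^H(\psi_{k,k})=(n/k)\,\psi_{n,k}$ (equivalently, $\prod_{l\mid n,\,l\nmid k}\Phi_l(\vartheta_k)=n/k$). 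Hence $\bar t\circ\bar r=(n/k)\operatorname{id}$. Since $n=\abs{H}$ divides $\abs{G}$, the integer $n/k$ is invertible in $\KK^G_S$, so $(k/n)\,\bar t$ is a retraction of $\bar r$; in the Karoubian category $\KK^G_S$ a split monomorphism realises its source as a direct summand of its target, so $\operatorname{im}(p_{n,k})$ is a direct summand of $A_{K_k}^0$ with $K_k\subsetneq H$. Together with the decomposition above, this proves the proposition.

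The hard part is the scalar computation in the previous paragraph. It is tempting to try to show that $t\circ r$ or $r\circ t$ is a scalar multiple of the identity on all of $\Cont(G/H)$ or $\Cont(G/K)$, but this already fails on $\K$-theory for non-trivial subgroups; the point is to pass first to the single idempotent summand $\operatorname{im}(p_{n,k})$, on which the permutation-module class collapses to the plain integer $n/k$. Obtaining exactly $n/k$ --- a divisor of $\abs{G}$, hence a unit after localisation --- rather than a zero divisor is precisely where inverting the primes dividing $\abs{G}$ does its work. The remaining ingredients, namely identifying $\Ind$ on representation rings with the $\KK^G$-endomorphism embedding of Theorem~\ref{thm:ivo} and the naturality arguments for the intertwining, should be routine once the Ind-Res adjunctions are in hand.
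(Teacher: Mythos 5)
Your proposal is correct and follows essentially the same route as the paper: decompose \(\Cont(G/H)\) in \(\KK^G_S\) along the idempotents \(p_{n,k}\), and identify the summand for a proper divisor \(k\mid n\) as a retract of \(A_K^0\) via the induction--restriction arrows together with the character computation \(\mathrm{ind}_K^H(\psi_{k,k}) = \abs{H:K}\,\psi_{n,k}\), using that \(\abs{H:K}\) is invertible after localisation. Your explicit construction of \(r\) and \(t\) from adjunction units/counits and the projection formula is just the paper's use of the Frobenius relation in Dell'Ambrogio's Mackey-module framework, and your splitting \((k/n)\,\bar t\circ\bar r=\mathrm{id}\) is the paper's Murray--von Neumann equivalence in different packaging.
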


\begin{proof}
  Recall that any idempotent in \(\KK^G\) has an image object.
  Therefore, we may write~\(A_H\) as the direct sum of the image
  objects of the complementary orthogonal idempotents~\(p_{n,k}\)
  for \(k \mid n\).  By definition, \(A_H^0\) is an image object
  for~\(p_{n,n}\).  We finish the proof of the proposition by
  showing that the image object of~\(p_{n,k}\) for a proper
  divisor~\(k\) of~\(n\) is isomorphic to a direct summand of~\(A_K^0\), where
  \(K\subseteq H\) is the cyclic subgroup with \(k\)~elements.  Here
  we use the Frobenius relation for the induction and restriction
  generators for the subgroup \(K\subseteq H\) and the idempotent
  element~\(p_{k,k}\) that projects~\(A_K\) onto~\(A_K^0\)
  (see~\cite{dellAmbrogio:Cell_G}*{Section~3.1}); this applies here
  because the groups \(\KK^G_*(\Cont(G/H),{-})\) for
  \(H\subseteq G\) form a Mackey module over the Green functor of
  representation rings, tensored
  by~\(\Z[S^{-1}]\).  The Frobenius formula says that
  \(\mathrm{ind}_K^H(\mathrm{res}_K^H(y)\cdot x) = y \cdot
  \mathrm{ind}_K^H(x)\) for all \(x\in R(K) \otimes \Z[S^{-1}]\),
  \(y\in R(H) \otimes \Z[S^{-1}]\); this is a relation in
  \(\KK^G_S(\Cont(G/H),\Cont(G/H))\).  We are interested in
  \(x=p_{k,k}\), \(y=p_{n,k}\).  The induction of representations is
  computed most easily on the level of characters: there we simply
  map a function \(\chi\colon K\to \C\) to the function
  \(\chi'\colon H\to\C\) given by \(\chi'(h)=0\) for \(h\notin K\)
  and \(\chi'(h) = \abs{H:K}\cdot \chi(h)\) for \(h\in H\)
  because~\(H\) is Abelian.  Using Lemma~\ref{lem:character_psi}, we
  see that the induced character of~\(p_{k,k}\) is
  \(\abs{{H:K}}\cdot p_{n,k}\).  Therefore, \(\abs{{H:K}}^{-1}\) times
  the product of the restriction generator, \(p_{k,k}\) and the
  induction generator gives the idempotent~\(p_{n,k}\) in
  \(\KK^G_S(\Cont(G/H),\Cont(G/H))\).  Thus, the restriction and
  induction generators provide a
  Murray--von Neumann equivalence between~\(p_{n,k}\) and a certain
  subprojection of~\(p_{k,k}\) in
  \(\KK^G_S(\Cont(G/K),\Cont(G/K))\), as needed.
\end{proof}

\begin{corollary}
  The homological functors that combine \(\KK^G_S(\Cont(G/H),{-})\)
  and \(\KK^G_S(A_H^0,{-})\), respectively, for all cyclic subgroups
  \(H\subseteq G\), have the same kernels on morphisms.  As a
  consequence, they generate the same relative homological algebra.
\end{corollary}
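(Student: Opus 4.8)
The plan is to show that the families of objects \(\{\Cont(G/H)\}\) and \(\{A_H^0\}\), indexed by the cyclic subgroups \(H\subseteq G\), generate the same stable homological ideal in \(\KK^G_S\); equivalently, that the two homological functors in the statement have the same kernel on morphisms. Once this is established, the ``as a consequence'' clause follows from the general theory recalled in Section~\ref{sub:hom}: for a stable homological ideal~\(\ideal\), the \(\ideal\)\nb-projective objects, the \(\ideal\)\nb-exact functors, the derived functors, and the universal \(\ideal\)\nb-exact stable homological functor all depend only on~\(\ideal\), not on the particular functor realising it.

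First I would prove one of the two inclusions of kernels on morphisms. By construction, \(A_H^0\) is the image object of the idempotent \(p_n\in\KK^G_S(\Cont(G/H),\Cont(G/H))\), hence a direct summand of \(\Cont(G/H)\) in \(\KK^G_S\). For any morphism~\(\phi\) in \(\KK^G_S\) and any \(n\in\Z\), the homomorphism induced by~\(\phi\) on \(\KK^G_S(\Sigma^n A_H^0,{-})\) is therefore a retract of the homomorphism induced by~\(\phi\) on \(\KK^G_S(\Sigma^n\Cont(G/H),{-})\), and so vanishes whenever the latter does. Hence every~\(\phi\) killed by \(\KK^G_S(\Sigma^n\Cont(G/H),{-})\) for all cyclic \(H\subseteq G\) and all \(n\in\Z\) is also killed by \(\KK^G_S(\Sigma^n A_H^0,{-})\) for all such \(H\) and~\(n\).

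For the reverse inclusion I would invoke Proposition~\ref{pro:fewer_generators_localisation}: in \(\KK^G_S\), each \(\Cont(G/H)\) is isomorphic to a \emph{finite} direct sum of \(A_H^0\) and direct summands of \(A_K^0\) for cyclic subgroups \(K\subseteq H\). Since \(\KK^G_S\) is additive, applying \(\KK^G_S(\Sigma^n({-}),{-})\) to this decomposition shows that any morphism~\(\phi\) killed by \(\KK^G_S(\Sigma^n A_K^0,{-})\) for all cyclic \(K\subseteq G\) and all \(n\in\Z\) is also killed by \(\KK^G_S(\Sigma^n\Cont(G/H),{-})\) for all cyclic \(H\) and all~\(n\). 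Combining the two inclusions gives equality of the two homological ideals, and hence the two functors generate the same relative homological algebra.

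There is no serious obstacle. The two points that deserve a word are that the direct sum in Proposition~\ref{pro:fewer_generators_localisation} is finite --- so that additivity of \(\KK^G_S(\Sigma^n({-}),{-})\) already suffices and no question about compatibility with infinite coproducts arises --- and that the argument uses nothing about direct summands beyond the observation that a retract of a vanishing homomorphism vanishes. If one wishes, the same two observations show that \(\{\Cont(G/H)\}\) and \(\{A_H^0\}\) generate the same localising subcategory of \(\KK^G_S\), though this is not needed for the statement.
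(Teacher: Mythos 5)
Your argument is correct and is essentially the paper's own proof: both directions rest on the observation that passing to direct summands cannot enlarge the kernel on morphisms, combined with Proposition~\ref{pro:fewer_generators_localisation} to recover each \(\Cont(G/H)\) from the \(A_K^0\). The paper states this more tersely (``the kernel does not change if we add a direct summand \dots{} or leave out objects that are direct sums of others''), but the content and the reliance on that proposition are the same.
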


\begin{proof}
  The kernel on morphisms does not change if we add a direct summand
  of a homological functor to a list of homological functors or if
  we leave out several objects that are direct sums of others in the
  list.  Using this repeatedly, the claim follows from
  Proposition~\ref{pro:fewer_generators_localisation}.
\end{proof}

The objects \(A_H^0\) for two conjugate cyclic subgroups are
isomorphic in \(\KK^G_S\) through conjugation.  Therefore, our
homological ideal does not change if we only take one cyclic subgroup
\(H\subseteq G\) in each conjugacy class.  The resulting generators
have the nice extra property that they are orthogonal, that is, any
element \(\KK^G(A^0_H, A^0_{H'})\) for \(H\) not conjugate to~\(H'\)
vanishes:

\begin{lemma}
  \label{lem:AH_zero_orthogonal}
  If there is a nonzero element in \(\KK^G_S(A^0_{H}, A^0_{H'})\) for
  two cyclic subgroups \(H,H' \subseteq G\), then~\(H\) is conjugate
  to~\(H'\).
\end{lemma}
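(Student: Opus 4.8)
The plan is to use Theorem~\ref{thm:ivo} of Dell'Ambrogio to reduce the computation of $\KK^G_S(A^0_H,A^0_{H'})$ to an algebraic question about Mackey modules over the representation Green functor, and then to pin down which matrix entries of the relevant morphism spaces survive after cutting down by the idempotents $p_n$ and $p_{n'}$. First I would recall that $A^0_H$ is the image of the idempotent $p_n = p_{n,n} \in \KK^G_S(\Cont(G/H)\otimes\Mat_{S^\infty},\Cont(G/H)\otimes\Mat_{S^\infty})$, so that, after localisation,
\[
  \KK^G_S(A^0_H,A^0_{H'}) \cong p_{n'} \cdot \KK^G_S\bigl(\Cont(G/H),\Cont(G/H')\bigr) \cdot p_n,
\]
where $n = \abs{H}$, $n' = \abs{H'}$. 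By Theorem~\ref{thm:ivo} (tensored by $\Z[S^{-1}]$, which is harmless since $\Cont(G/H)$ is $\aleph_1$\nb-compact), this morphism group is identified with the group of Mackey-module maps $\mathrm{k}^G\Cont(G/H) \to \mathrm{k}^G\Cont(G/H')$, and the Yoneda-type description of these representable Mackey modules means such a map is determined by a single element of $\mathrm{k}^G\Cont(G/H')$ evaluated at $H$, i.e.\ by an element of $\KK^G_*(\Cont(G/H),\Cont(G/H')) \cong \K^H_*\bigl(\Cont(G/H')\bigr)$, which by the Ind-Res adjunction is the representation module of the $H$-set $G/H'$.

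The key step is then to make the actions of the idempotents $p_n$ (acting on the right, through the representation ring of $H$) and $p_{n'}$ (acting on the left, through the representation ring of $H'$) explicit on this $H$-set module. Decomposing $G/H'$ into $H$-orbits, the module $\K^H_*(\Cont(G/H'))$ is a direct sum over double cosets $H g H'$ of representation modules $\Ind_{H\cap {}^gH'}^{H}(\text{triv})$ (up to the grading bookkeeping). Using Lemma~\ref{lem:character_psi}, the idempotent $p_n$ projects onto the part of the $H$-representation ring supported on characters that are \emph{faithful} on $H$ — equivalently, on the summand where $H$ acts through a representation in which no proper subgroup acts trivially. Dually, $p_{n'}$ picks out the faithful part for $H'$. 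The upshot I would establish is: $p_{n'}\cdot m \cdot p_n$ can only be nonzero if some $H$-orbit on $G/H'$ has stabiliser equal to all of $H$ \emph{and} the corresponding $H'$-orbit on $G/H$ (dually) has stabiliser equal to all of $H'$; concretely, there must exist $g \in G$ with $H \subseteq {}^gH'$ coming from the $p_n$-side and, symmetrically, $H' \subseteq {}^{g^{-1}}H$ coming from the $p_{n'}$-side, forcing $H = {}^gH'$, i.e.\ $H$ conjugate to $H'$.

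The main obstacle will be the bookkeeping in the previous step: correctly tracking how the idempotent $p_{n,n}$, defined via the polynomial $\psi_{n,n} = \Phi_n'(z)\,z/n$ in the representation ring, acts under induction and restriction along the double-coset decomposition, and in particular verifying that the "faithfulness" characterisation of $p_n$ via Lemma~\ref{lem:character_psi} survives passage through the Frobenius reciprocity formula used in the proof of Proposition~\ref{pro:fewer_generators_localisation}. The cleanest route is probably to argue purely on the level of characters: the character of $p_n$ is the indicator of generators of $\Z/n$, so $p_{n'}\cdot m \cdot p_n$, viewed as a class function on the relevant $H$-sets, is supported where both the $H$-isotropy character and the $H'$-isotropy character are "primitive", and a short counting argument with orders of cyclic groups then gives $\abs{H} = \abs{H'}$ together with the conjugacy. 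I expect the whole argument to be two or three short paragraphs once the reduction to characters is in place.
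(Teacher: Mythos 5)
Your proposal is correct and takes essentially the same route as the paper: both reduce to Dell'Ambrogio's double-coset description of \(\KK^G(\Cont(G/H),\Cont(G/H'))\) via Theorem~\ref{thm:ivo} and then use that the character of \(p_{n,n}\) is the indicator of the generators of~\(H\) (Lemma~\ref{lem:character_psi}), so this idempotent restricts to zero in every proper subgroup and kills every double-coset summand except those with \({}^g H' = H\). The paper packages this through the explicit spanning composites and the Frobenius formula rather than your Yoneda/orbit-decomposition bookkeeping, but the underlying computation is the same.
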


\begin{proof}
  In the proof of Theorem~\ref{thm:ivo}
  in~\cite{dellAmbrogio:Cell_G}, it is shown that any element of
  \[
    \KK^G(\Cont(G/H),\Cont(G/H'))
  \]
  may be written as a sum of products
  \begin{multline*}
    \Cont(G/H) \to \Cont\bigl(G/(H\cap (H')^{g^{-1}})\bigr)
    \xrightarrow[\cong]{c_g} \Cont\bigl(G/(H^g\cap H')\bigr)
    \\\xrightarrow{m_E} \Cont\bigl(G/(H^g\cap H')\bigr)
    \to \Cont(G/H'),
  \end{multline*}
  where the first and last arrow are the generators that act by
  induction and restriction on \(\K^H_*(B)\), \(c_g\) is induced by right
  multiplication by~\(g\), and~\(m_E\) is obtained by induction from
  \(E \in \KK^{H^g\cap H'}(\C,\C) \cong R(H^g\cap H')\).  When we
  replace \(\Cont(G/H)\)
  and \(\Cont(G/H')\) by \(A_H^0\) and~\(A_{H'}^0\), respectively,
  then we multiply these composites on both sides by the idempotents
  \(p_{n,n}\) and \(p_{n',n'}\), where \(n\defeq \abs{H}\) and
  \(n'\defeq \abs{H'}\).  These idempotents restrict to~\(0\) in any
  proper subgroup.  Hence, by the Frobenius formula for Mackey
  modules in~\cite{dellAmbrogio:Cell_G}, these products kill the
  restriction and induction generators unless
  \(H\cap (H')^{g^{-1}} = H\) and \(H^g\cap H' = H'\) or,
  equivalently, \(H^g = H'\).  Therefore,
  \(\KK^G_S(A^0_{H}, A^0_{H'})\) vanishes unless \(H\) and~\(H'\)
  are conjugate.
\end{proof}

\begin{lemma}
  Let \(H\subseteq G\) be a cyclic subgroup.  Let
  \(N_H \defeq \{g\in G \mid g H g^{-1} = H\}\) and \(n\defeq \abs{H}\).
  The canonical action of~\(N_H\) on~\(H\) induces an action on its
  representation ring, which further induces an action on
  \(\Z[\vartheta_n,1/\abs{G}]\).  This action is trivial on
  \(H \subseteq N_H\) and therefore descends to an action of the
  quotient group \(W_H \defeq N_H/H\).  The endomorphism ring
  \(\KK^G_S(A^0_{H}, A^0_{H})\) is isomorphic to the crossed product ring
  \(\Z[\vartheta_n,1/\abs{G}] \rtimes W_H\).
\end{lemma}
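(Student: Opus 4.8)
The plan is to use the already-established identification of $\KK^G(\Cont(G/H),\Cont(G/H))$ with a Mackey-module endomorphism ring (Theorem~\ref{thm:ivo}) together with the explicit idempotent $p_n = p_{n,n}$ cutting out $A_H^0$. First I would recall, from the introduction and from \cite{dellAmbrogio:Cell_G}*{Section~3.1}, that $\KK^G(\Cont(G/H),\Cont(G/H))$ contains the image of the representation ring $R(H) \cong \Z[z]/(z^n-1)$ together with a copy of the group ring $\Z[W_H]$ (coming from the right-translation action of $N_H/H$ on $G/H$), and that these generate a subring isomorphic to $\Z[z]/(z^n-1) \rtimes W_H$; after localising at $S$ and passing to $A_H^0$ we cut this down by the idempotent $p_n$. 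Since $p_n$ corresponds under the character isomorphism of Lemma~\ref{lem:character_psi} to the characteristic function of the generators of $\Z/n$, and $W_H$ permutes those generators among themselves, the idempotent $p_n$ is $W_H$-invariant; hence conjugating by $W_H$ still makes sense on $p_n \cdot R(H) \otimes \Z[S^{-1}] \cdot p_n \cong \Z[\vartheta_n, 1/\abs{G}]$, and the crossed product $\Z[\vartheta_n,1/\abs{G}] \rtimes W_H$ sits inside $\KK^G_S(A_H^0, A_H^0)$.

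The remaining, and main, task is to show that this inclusion is an \emph{equality}, i.e.\ that there are no further morphisms. Here I would invoke the decomposition in the proof of Theorem~\ref{thm:ivo} (as already used in the proof of Lemma~\ref{lem:AH_zero_orthogonal}): an arbitrary element of $\KK^G(\Cont(G/H),\Cont(G/H))$ is a sum of composites $\Cont(G/H) \to \Cont(G/(H \cap H^{g^{-1}})) \xrightarrow{c_g} \Cont(G/(H^g \cap H)) \xrightarrow{m_E} \Cont(G/(H^g \cap H)) \to \Cont(G/H)$ with $E \in R(H^g \cap H)$. When we sandwich such a composite between the idempotents $p_n$ on both sides, the Frobenius relation together with the fact (already observed in Lemma~\ref{lem:AH_zero_orthogonal}) that $p_n$ restricts to $0$ on every proper subgroup of $H$ forces $H \cap H^{g^{-1}} = H$, i.e.\ $g \in N_H$; and then the induction/restriction generators drop out, leaving only $p_n \cdot m_E' \cdot c_g$ with $E' \in R(H) \otimes \Z[S^{-1}]$ and $g \in N_H$. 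Since $c_g$ for $g \in H$ acts trivially (inner conjugation is trivial on $G/H$, or on the representation ring), only the class of $g$ in $W_H = N_H/H$ matters, and $p_n \cdot (R(H)\otimes\Z[S^{-1}]) \cdot p_n \cong \Z[\vartheta_n, 1/\abs{G}]$. This exhibits $\KK^G_S(A_H^0, A_H^0)$ as spanned by the elements $a \cdot u_w$ with $a \in \Z[\vartheta_n,1/\abs{G}]$, $w \in W_H$, with the crossed-product multiplication rule $u_w \cdot a = {}^w\!a \cdot u_w$ coming from the compatibility of $c_g$ with $m_E$ (namely $c_g \circ m_E = m_{{}^g\!E} \circ c_g$, which is exactly the $N_H$-action on $R(H)$ already described).

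The step I expect to be the main obstacle is checking that the map from the \emph{abstract} crossed product $\Z[\vartheta_n,1/\abs{G}] \rtimes W_H$ to $\KK^G_S(A_H^0,A_H^0)$ is injective, not merely surjective --- i.e.\ that distinct pairs $(a,w)$ give linearly independent morphisms. For this I would restrict along the forgetful functor to ordinary $\KK$-theory: the action of $A_H^0$ on $\mathrm{ck}^G_*(B)$ at the component $\K^H_*(B)$ recovers, for $B = \Cont(G/H)$ itself, the faithful action of $\Z[\vartheta_n,1/\abs{G}] \rtimes W_H$ on the free rank-one module $\Z[\vartheta_n,1/\abs{G}] \rtimes W_H$ (via the Ind-Res computation $\KK^G_*(\Cont(G/H),\Cont(G/H)) \cong \K^H_*(\Cont(G/H)) \cong R(H) \otimes R(H)$-type bookkeeping, cut down by $p_n \otimes p_n$). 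Faithfulness of a ring acting on itself by left multiplication is automatic, so injectivity follows; combined with surjectivity from the previous paragraph, this gives the claimed ring isomorphism. A minor additional point to record is that the $W_H$-action on $\Z[\vartheta_n,1/\abs{G}]$ is indeed trivial on $H \subseteq N_H$ --- this is immediate because inner automorphisms of $H$ act trivially on its (Abelian) representation ring --- so the semidirect product by $W_H = N_H/H$ is well defined, exactly as in the statement.
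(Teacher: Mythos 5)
Your argument is correct in outline and, for the main part, follows the same route as the paper: the identification of the subring generated by $R(H)$ and the translations by $N_H/H$, the $W_H$\nb-invariance of $p_n$, and above all the surjectivity step, which in both proofs runs exactly as in Lemma~\ref{lem:AH_zero_orthogonal} --- decompose an arbitrary endomorphism of $\Cont(G/H)$ into composites through double cosets as in Dell'Ambrogio's proof of Theorem~\ref{thm:ivo}, sandwich with $p_n$, and use that $p_n$ restricts to zero on every proper subgroup to force $g\in N_H$ and to kill the induction--restriction factors, leaving spanning elements $p_n m_E c_g$ that satisfy the crossed-product relations via $c_g\circ m_E = m_{{}^g E}\circ c_g$. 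Where you genuinely differ is the injectivity of the map from the abstract crossed product: the paper simply quotes Theorem~\ref{thm:ivo} to get injectivity of $\Z[z]/(z^n-1)\rtimes W_H\to \KK^G(\Cont(G/H),\Cont(G/H))$ before localisation and observes that this survives inverting $\abs{G}$ because the groups involved are torsion-free, then cuts by $p_n$; you instead exhibit a faithful module by letting the ring act on $\K^H_*(\Cont(G/H))$ cut down by the idempotents and identifying it with the regular representation. Your route works and is more self-contained, but it is also the sketchiest part of your write-up: the ``$R(H)\otimes R(H)$-type bookkeeping'' should really be the orbit decomposition $\K^H_*(\Cont(G/H))\cong \bigoplus_{HgH\in H\backslash G/H} R(H\cap {}^g H)$, in which the summands with $g\notin N_H$ are killed by $p_n$ (since its character vanishes off the generators of $H$) and the remaining $W_H$\nb-indexed summands each contribute $\Z[\vartheta_n,1/\abs{G}]$, with $m_E$ and $c_w$ acting as left multiplication; once this is written out, faithfulness of the composite action gives injectivity without circularity. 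The paper's shortcut buys brevity by reusing the already quoted full-faithfulness theorem (plus the torsion-freeness remark needed to pass to the localisation); your version buys an explicit description of $F^H(\Cont(G/H)\otimes\Mat_{S^\infty})$ as the free rank-one module, which is in fact also useful later, but you should fill in the orbit/double-coset computation to make the injectivity step complete.
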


\begin{proof}
  The direct summand in the localised representation ring of~\(H\)
  that is the
  image of~\(p_n\) is isomorphic to \(\Z[\vartheta_n,1/\abs{G}]\)
  by~\eqref{eq:cyclic_rep_localized}.  This summand is invariant
  under group automorphisms, so that~\(W_H\) acts naturally on it.
  The span of the elements corresponding to representations of $H$ in the
  endomorphism ring of~\(A^0_H\) in~\(\KK^G_S\) is isomorphic to
  \(\Z[\vartheta_n,1/\abs{G}]\).  Conjugation by any one of \(g\in
  N_H\) leaves this subspace invariant
  and acts by the canonical \(N_H\)\nb-action on
  \(\Z[\vartheta_n,1/\abs{G}]\) mentioned above.  An argument
  as in the proof of
  Lemma~\ref{lem:AH_zero_orthogonal} shows that products of
  elements corresponding to group representations and conjugations for \(g\in N_H/H\) span
  the endomorphism ring of~\(A^0_H\) in~\(\KK^G_S\) and satisfy the
  relations in the crossed product
  \(\Z[\vartheta_n,1/\abs{G}] \rtimes W_H\).  Theorem~\ref{thm:ivo}  shows that the canonical map from
  \(\Z[z]/(z^n-1) \rtimes W_H\) to the endomorphism ring of~\(\Cont(G/H)\)
  in~\(\KK^G\) is injective.  This remains so after
  inverting~\(\abs{G}\) because the groups involved are
  torsion-free.  And then it follows that the map from
  \(\Z[\vartheta_n,1/\abs{G}] \rtimes W_H\) to the endomorphism
  ring of~\(A^0_H\) in~\(\KK^G_S\) is injective.
\end{proof}

\begin{remark}
  Let~$X$ be a set of representatives for the conjugacy classes of
  cyclic subgroups of~$G$.  Recall that $A_H^0 \cong A_{H'}^0$ if $H$
  and~$H'$ are conjugate.  This and
  Proposition~\ref{pro:fewer_generators_localisation} imply that the
  endomorphism rings of \(\bigoplus_{H\in X} A^0_H\) and
  \(\bigoplus_{\text{cyclic }H\subseteq G} \Cont(G/H)\) in the
  localisation of~\(\KK^G\) at~\(S\) are Morita equivalent: each ring
  is isomorphic to a corner in a matrix algebra over the other ring.
  These rings usually fail, however, to be isomorphic.  The module
  category over the first ring is~\(\mathfrak{A}_G\), and the second
  ring is the localisation of~\(R^G_{\textup{cy}}\) at~$S$.
  Therefore, the categories
  \(\mathfrak{Mod}^{\Z/2}(R^G_{\textup{cy}})_{\aleph_1}[S^{-1}]\)
  and~\(\mathfrak{A}_G\) are equivalent, but not isomorphic.
\end{remark}

At this point, the general machinery of homological algebra in
triangulated categories shows that the universal Abelian
approximation for \(\KK^G_S\) with respect to the homological ideal
that we are looking at is the functor to the category of countable
\(\Z/2\)-graded modules over
\(\bigoplus_H \Z[\vartheta_n,1/\abs{G}] \rtimes W_H\) which maps an
object~\(B\) to the family \(\KK^G_{S,*}(A_H^0,B)\); here~\(H\) runs
through a set of representatives for the conjugacy classes of cyclic
subgroups in~\(G\).  Inspection shows that this functor is naturally
isomorphic to the
functor~\(F\) in Theorem~\ref{the:UCT_S}.  To get the Universal
Coefficient Theorem in that theorem, it remains to prove that the
target category is hereditary.

\begin{lemma}
  \label{lem:hereditary_WH}
  Let \(H\subseteq G\) be a cyclic subgroup, let \(W_H\) and~\(n\)
  be as in the previous lemma.  The crossed product ring
  \(\Z[\vartheta_n,1/\abs{G}] \rtimes W_H\) is hereditary, that is,
  any module over it has a projective resolution of length~\(1\).
\end{lemma}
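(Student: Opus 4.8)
The plan is to reduce the hereditarity of $\Z[\vartheta_n,1/\abs{G}] \rtimes W_H$ to a statement about the ring $\Z[\vartheta_n,1/\abs{G}]$ itself and the structure of the finite group $W_H$. First I would observe that $\Z[\vartheta_n]$ is the ring of integers in the cyclotomic field $\Q(\vartheta_n)$, hence a Dedekind domain, and that inverting the finitely many primes dividing $\abs{G}$ keeps it a Dedekind domain (a localisation of a Dedekind domain is again Dedekind). So $R \defeq \Z[\vartheta_n,1/\abs{G}]$ has global dimension at most~$1$. Since $H$ is cyclic of order $n$, every prime dividing $n$ also divides $\abs{G}$ and is therefore invertible in $R$; this is the key place where the localisation at the group order is used.

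Next I would bring in the group $W_H = N_H/H$. The crucial point is that $\abs{W_H}$ divides $\abs{G}$ (indeed $\abs{W_H}$ divides $\abs{G}/\abs{H}$), so $\abs{W_H}$ is invertible in $R$. By a standard relative homological algebra argument for crossed products by finite groups with invertible order — the averaging/Maschke-type argument — the ring extension $R \hookrightarrow R \rtimes W_H$ is separable, so that $R \rtimes W_H$ is projective as a bimodule over itself relative to $R$. Concretely, the functor of restriction of modules from $R \rtimes W_H$ to $R$ and its adjoint (induction along the inclusion) exhibit every $R \rtimes W_H$-module as a direct summand of a module induced from an $R$-module, because one can build the idempotent $\frac{1}{\abs{W_H}} \sum_{w \in W_H} w \otimes w^{-1}$ splitting the counit. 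Hence the projective dimension of any $R \rtimes W_H$-module is bounded by the projective dimension of an induced module, which equals the projective dimension of the underlying $R$-module.

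Therefore it remains only to note that any $R \rtimes W_H$-module, restricted to $R$, has projective dimension at most~$1$ because $R$ is hereditary, and that induction along $R \hookrightarrow R \rtimes W_H$ sends projective $R$-modules to projective $R \rtimes W_H$-modules (being left adjoint to the exact restriction functor). Splicing these together: given an $R \rtimes W_H$-module $M$, choose a length-one projective $R$-resolution of $\Res M$, induce it up, and split off $M$ as a direct summand using the separability idempotent; this produces a length-one projective resolution of $M$ over $R \rtimes W_H$. The main obstacle — and the only genuinely arithmetic input — is verifying that $\Z[\vartheta_n,1/\abs{G}]$ is hereditary, i.e.\ that inverting the primes dividing $\abs{G}$ (which include all primes dividing $n$) does not spoil the Dedekind property; once that is in place, everything else is the formal separability argument for finite-group crossed products with invertible group order, which is exactly parallel to the averaging arguments already used elsewhere in the paper.
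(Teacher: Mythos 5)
Your proposal is correct and follows essentially the same route as the paper: establish that \(\Z[\vartheta_n,1/\abs{G}]\) is hereditary (Dedekind), then use that \(\abs{W_H}\) divides \(\abs{G}\) to run a Maschke-type averaging argument reducing projectivity over the crossed product to projectivity over the subring. The only difference is packaging — you phrase the averaging via the separability idempotent and the induction--restriction adjunction, while the paper averages splittings of extensions directly — but the mathematical content is the same.
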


\begin{proof}
  We claim that a module over \(\Z[\vartheta_n]\) is projective if and
  only if it is free as an Abelian group.  This is shown in the
  proof of \cite{Koehler:Thesis}*{Theorem~12.14}.  That theorem is
  only stated if~\(n\) is prime because that is the case that is
  needed by Köhler at the time.  The proof, however, only uses that
  \(\Z[\vartheta_p]\) for a prime~\(p\) is a Dedekind domain, and this
  remains true for all~\(n\).  As a consequence, any submodule of a
  free module over \(\Z[\vartheta_n]\) is projective.  Then it follows
  that any module~\(M\) over~\(\Z[\vartheta_n]\) has a projective
  resolution \(0 \to P_1 \to P_0 \to M\) of length~\(1\).  This
  remains a resolution if we tensor by \(\Z[\abs{G}^{-1}]\) because
  the latter is flat, and \(P_j \otimes_Z \Z[\abs{G}^{-1}]\) is
  projective as a module over \(\Z[\vartheta_n,1/\abs{G}]\) because
  \[
    \Hom(P_j \otimes_Z \Z[\abs{G}^{-1}], N) \cong \Hom(P_j, N)
  \]
  if~\(N\) is a module over \(\Z[\vartheta_n,1/\abs{G}]\).  As a
  consequence, any submodule of a projective module over
  \(\Z[\vartheta_n,1/\abs{G}]\) is itself projective.

  Since~\(\abs{W_H}\) divides~\(\abs{G}\), averaging over the
  group~\(W_H\) is possible after inverting~\(\abs{G}\).  Therefore,
  an extension of modules over
  \(\Z[\vartheta_n,1/\abs{G}] \rtimes W_H\) that splits by a
  \(\Z[\vartheta_n,1/\abs{G}]\)-module map also splits by a
  \(\Z[\vartheta_n,1/\abs{G}] \rtimes W_H\)-module map.  Thus, a
  module is projective over
  \(\Z[\vartheta_n,1/\abs{G}] \rtimes W_H\) once it is projective
  over the subring \(\Z[\vartheta_n,1/\abs{G}]\).  Now it follows that
  any submodule of a projective module over
  \(\Z[\vartheta_n,1/\abs{G}] \rtimes W_H\) is itself projective.  This
  is equivalent to our statement.
\end{proof}

Any countable module over a countable ring is a quotient of a
countable free module.  Therefore, Lemma~\ref{lem:hereditary_WH}
implies that the
category~\(\mathfrak{A}_G\) is hereditary.  Now the Universal
Coefficient in Theorem~\ref{the:UCT_S} follows from
Theorem~\ref{thm:trspectral}.  This implies
Theorem~\ref{the:UCT_in_introduction} by well known arguments, as
with the usual Universal Coefficient Theorem (see, for example,
\cite{Blackadar:K-theory}*{Section 23.10}).

By Proposition~\ref{pro:localised_bootstrap}, an object~\(B\) in the
\(G\)\nb-equivariant bootstrap class is \(S\)\nb-divisible if and
only if multiplication by~\(\abs{G}\) is invertible on
\(\KK^G(\Cont(G/H),B)\) for all cyclic subgroups~\(H\).  This makes it easier to check this hypothesis if
we already know that~\(B\) is in the equivariant bootstrap class.
We do not know a checkable necessary and sufficient criterion for
the latter, however.

Now we specialise to \emph{Kirchberg algebras}, that is, nonzero,
simple, purely infinite, nuclear \(\Cst\)\nb-algebras.  In that
case, we may lift the classification of actions up to
\(\KK^G\)-equivalence to a classification up to cocycle conjugacy:

\begin{theorem}[Gabe and Szabó~\cite{Gabe-Szabo:Dynamical_Kirchberg}]
  Let~\(G\) be a finite group.  Any \(G\)\nb-action on a separable,
  nuclear \(\Cst\)\nb-algebra is \(\KK^G\)-equivalent to a pointwise
  outer action on a stable Kirchberg algebra.  Two pointwise outer
  \(G\)\nb-actions on stable Kirchberg algebras are
  \(\KK^G\)-equivalent if and only if they are cocycle conjugate.
\end{theorem}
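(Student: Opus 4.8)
The plan is to invoke the dynamical Kirchberg--Phillips theorem of Gabe and Szabó~\cite{Gabe-Szabo:Dynamical_Kirchberg} directly; since its proof rests on a substantial body of equivariant classification machinery, I only indicate the shape of the argument, which lies well outside the homological scope of this paper.

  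For the existence statement, the idea is a dynamical version of Kirchberg's absorption theorems. Given a $G$\nb-action~$\alpha$ on a separable nuclear $\Cst$\nb-algebra~$A$, I would first stabilise~$A$ (harmless for the $\KK^G$\nb-class) and then tensor with a carefully chosen model action on~$\mathcal{O}_\infty$ that is $\KK^G$\nb-trivial but equivariantly $\mathcal{O}_\infty$\nb-absorbing. The resulting $G$\nb-$\Cst$\nb-algebra is a stable Kirchberg algebra, remains $\KK^G$\nb-equivalent to~$(A,\alpha)$, and, after a further tensorial perturbation by a model action with a Rokhlin-type genericity, carries a pointwise outer action: outerness is forced precisely because such a model action absorbs arbitrary actions tensorially.

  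For the uniqueness statement, which is the analytic heart of the matter, the idea is an equivariant one-sided intertwining. One needs an equivariant classification of full nuclear \(^*\)\nb-homomorphisms between stable Kirchberg algebras up to asymptotic unitary equivalence in terms of $\KK^G$, together with equivariant $\mathcal{O}_\infty$\nb-absorption of the two actions, which holds because they are pointwise outer actions of a discrete amenable group on Kirchberg algebras. Granting these, a $\KK^G$\nb-equivalence between the two actions is realised by a pair of equivariant \(^*\)\nb-homomorphisms, the classification of morphisms identifies both composites with the respective identities up to asymptotic unitary equivalence compatible with the actions, and an equivariant version of Elliott's approximate intertwining argument upgrades this to an isomorphism; carrying the unitary paths along turns the isomorphism into a cocycle conjugacy rather than a genuine conjugacy. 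The converse implication is formal, since a cocycle conjugacy induces an invertible element of~$\KK^G$.

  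The main obstacle is the uniqueness direction, specifically the equivariant asymptotic-unitary-equivalence classification of morphisms and the requisite equivariant $\mathcal{O}_\infty$\nb-absorption for pointwise outer actions of discrete amenable groups. This is exactly the content of~\cite{Gabe-Szabo:Dynamical_Kirchberg} and its prerequisites (building on work of Kirchberg, Phillips, and Szabó), which we take as a black box.
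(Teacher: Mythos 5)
Your proposal takes essentially the same route as the paper: the theorem is not proved there but quoted from the literature, and the uniqueness half is exactly \cite{Gabe-Szabo:Dynamical_Kirchberg}*{Theorem~A}, which you correctly treat as a black box, including the formal converse that a cocycle conjugacy gives an invertible \(\KK^G\)-element. Two caveats: the paper attributes the existence half to \cite{Meyer:Actions_Kirchberg}*{Theorem~2.1} rather than to Gabe--Szab\'o, and your heuristic for existence --- stabilising and tensoring with a model action on \(\mathcal{O}_\infty\) --- does not by itself produce a Kirchberg algebra, because for a non-simple nuclear \(A\) the algebra \(A \otimes \mathcal{O}_\infty \otimes \Comp\) is not simple; achieving simplicity while keeping the \(\KK^G\)-class fixed is precisely the extra content of Meyer's Theorem~2.1, so that step should be cited rather than sketched as a tensorial absorption argument.
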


\begin{proof}
  The first claim is a special case of
  \cite{Meyer:Actions_Kirchberg}*{Theorem~2.1}.  The second claim is
  a special case of
  \cite{Gabe-Szabo:Dynamical_Kirchberg}*{Theorem~A}.
\end{proof}

\begin{corollary}
  \label{cor:classify_2-local}
  Let~\(G\) be a finite group.  There is a bijection between the set
  of isomorphism classes of objects of\/~\(\mathfrak{A}_G\) and the
  set of cocycle conjugacy classes of pointwise outer
  \(G\)\nb-actions on stable Kirchberg algebras that belong to the
  \(G\)\nb-equivariant bootstrap class and are \(S\)\nb-divisible in
  \(\KK^G\).
\end{corollary}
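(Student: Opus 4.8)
The plan is to combine the classification up to \(\KK^G\)-equivalence given by Theorem~\ref{the:UCT_S} with the dynamical Kirchberg--Phillips theorem of Gabe and Szabó quoted above.  First I would note that, by Proposition~\ref{pro:KKGS_two_ways}, two \(S\)\nb-divisible objects are isomorphic in \(\KK^G_S\) if and only if they are \(\KK^G\)-equivalent.  Hence the second part of Theorem~\ref{the:UCT_S} gives a bijection between isomorphism classes of objects of~\(\mathfrak{A}_G\) and \(\KK^G\)-equivalence classes of \(S\)\nb-divisible objects in the \(G\)\nb-equivariant bootstrap class.  It therefore suffices to construct a bijection between the latter set and the set of cocycle conjugacy classes of pointwise outer \(G\)\nb-actions on stable Kirchberg algebras that lie in the equivariant bootstrap class and are \(S\)\nb-divisible in \(\KK^G\).

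The candidate map sends such an action to its \(\KK^G\)-equivalence class; this clearly lands among the \(S\)\nb-divisible objects of the equivariant bootstrap class, and by the second assertion of the Gabe--Szabó theorem it is injective on cocycle conjugacy classes.  For surjectivity, let~\(B\) be an \(S\)\nb-divisible object of the equivariant bootstrap class.  By definition, \(B\) is \(\KK^G\)-equivalent to a \(G\)\nb-action on a separable Type~I \(\Cst\)\nb-algebra, which is nuclear, so the first assertion of the Gabe--Szabó theorem produces a pointwise outer \(G\)\nb-action~\(B'\) on a stable Kirchberg algebra with \(B' \cong B\) in \(\KK^G\).  Membership in the bootstrap class is preserved by \(\KK^G\)-equivalence, and so is \(S\)\nb-divisibility, since \(p\cdot\mathrm{id}_{B'}\) is conjugate in \(\KK^G\) to \(p\cdot\mathrm{id}_B\) by any \(\KK^G\)-equivalence between~\(B\) and~\(B'\).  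Hence \(B'\) lies in the domain of our map and is sent to the class of~\(B\).

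Composing this bijection with the one of Theorem~\ref{the:UCT_S} yields the statement.  I do not expect a genuine obstacle: all of the substance is contained in Theorem~\ref{the:UCT_S} and in the cited theorem of Gabe and Szabó, and the only points needing (routine) verification are that the Kirchberg-algebra representative supplied by Gabe and Szabó inherits \(S\)\nb-divisibility and bootstrap membership, and that for \(S\)\nb-divisible objects an isomorphism in \(\KK^G_S\) is the same thing as a \(\KK^G\)-equivalence.
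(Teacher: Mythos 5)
Your argument is correct and is exactly the route the paper intends: the corollary is stated immediately after the Gabe--Szab\'o theorem precisely so that it follows by combining that theorem with the bijection in Theorem~\ref{the:UCT_S}, and your routine verifications (that the Kirchberg representative, obtained by applying Gabe--Szab\'o to the nuclear Type~I model, inherits bootstrap membership and \(S\)\nb-divisibility, and that isomorphism in \(\KK^G_S\) among \(S\)\nb-divisible objects is just \(\KK^G\)-equivalence, since \(\KK^G_S\) is a full subcategory of \(\KK^G\)) are the only points the paper leaves implicit.
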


\begin{example}
  Let \(G=\Z/p\) be a cyclic group whose order is a prime
  number~\(p\).  Then~\(G\) only has the cyclic subgroups \(\{1\}\)
  and~\(G\).  So our invariant~\(F\) takes values in the product of
  the categories of \(\Z/2\)-graded countable
  \(\Z\rtimes G\)-modules and of \(\Z/2\)-graded countable
  \(\Z[\vartheta_p]\)-modules.  The crossed product \(\Z\rtimes G\)
  is the group ring of~\(G\), which is naturally isomorphic to
  \(\Z[z]/(z^p-1)\).  After inverting~\(p\), this splits as
  \(\Z[1/p]\oplus \Z[\vartheta_p,1/p]\)
  by~\eqref{eq:cyclic_rep_localized}.  Thus our classification
  theorem implies that isomorphism classes of objects in~\(\KK^G\) are
  in bijection with triples \((X,Y,Z)\) where \(X\) is a
  \(\Z/2\)-graded Abelian group and \(Y,Z\) are \(\Z/2\)-graded
  \(\Z[\vartheta_p,1/p]\)-modules.  This is equivalent to the
  classification that follows from K\"ohler's UCT and
  \cite{Meyer:Actions_Kirchberg}*{Theorem~7.2}.
\end{example}

\begin{example}
  Let \(V=\Z/2\times \Z/2\) be the Klein four-group with generators
  \(a,b\) subject to the relations \(a^2 = b^2 = (a b)^2 = 1\).
  Besides the trivial subgroup, this has exactly three cyclic
  subgroups, namely, those generated by \(a,b,a b\), and they are
  of order~\(2\).  For these three subgroups, we find
  \(\Z[\vartheta_2] = \Z\) and \(N_H/H \cong \Z/2\), acting
  trivially.  So the crossed product
  \(\Z[\vartheta_2,1/2] \rtimes \Z/2\) is isomorphic to
  \(\Z[1/2]^{\oplus 2}\) because of the two characters of~\(\Z/2\).
  For the trivial subgroup, the relevant ring is
  \(\Z[\vartheta_1,1/2]\rtimes V\), the group ring of~\(V\) with
  coefficients in \(\Z[1/2]\).  The evaluation at the four
  characters splits this group ring as \(\Z[1/2]^{\oplus 4}\).  So
  altogether, we get \(4+6=10\) summands \(\Z[1/2]\).  Thus
  isomorphism classes of \(2\)\nb-divisible objects in the
  \(V\)\nb-equivariant bootstrap class in~\(\KK^V\) are in bijection
  with \(10\)-tuples of \(2\)\nb-divisible \(\Z/2\)-graded Abelian
  groups.
\end{example}

\begin{remark}
  Let~\(B\) be any object in the \(G\)\nb-equivariant bootstrap
  class.  The inclusion map \(B \to B \otimes \Mat_{S^\infty}\) is
  part of an exact triangle \(B \to B \otimes \Mat_{S^\infty} \to
  B\otimes C \to \Sigma B\) with \(\K_1(C) = 0\) and
  \[
    \K_0(C) = \Z[S^{-1}]/\Z
    \cong \bigoplus_{p\in S} \Z[p^{-1}]/\Z,
  \]
  where the last isomorphism follows from the Chinese Remainder
  Theorem.  Thus we may decompose \(C = \bigoplus_{p\in S} C_p\)
  with \(\K_1(C_p)=0\), \(\K_0(C_p) = \Z[1/p]/\Z\).  We may
  write~\(B\) as the desuspended mapping cone of the map
  \(B \otimes \Mat_{S^\infty} \to \bigoplus_{p\in S} B\otimes C_p\)
  in the above exact triangle.  If we know this arrow in \(\KK^G\),
  we know~\(B\) up to isomorphism.  We may further write
  \(C_p = \varinjlim C_{p,n}\) where~\(C_{p,n}\) is the object in
  the bootstrap class with \(\K_1(C_{p,n})=0\) and
  \(\K_0(C_{p,n})=\Z/p^n\).  Thus \(B\otimes C_{p,n}\) has the extra
  property that multiplication by~\(p^n\) vanishes in its
  \(\KK^G\)-endomorphism ring.  Thus, one may try to classify
  general objects of the equivariant bootstrap class by first
  classifying those objects with the property that multiplication
  by~\(p^n\) vanishes in their \(\KK^G\)-endomorphism ring for some
  \(p\mid \abs{G}\), \(n\in\N\).  Even when this can be done, it still
  remains to classify the maps in \(\KK^G\) from an
  \(S\)\nb-divisible object to one of the form \(B\otimes C_p\).
\end{remark}

\begin{bibdiv}
  \begin{biblist}
    \bibselect{references}
  \end{biblist}
\end{bibdiv}
\end{document}